\documentclass[11pt,a4paper,reqno]{amsart}

\usepackage[a4paper,margin=0.95in]{geometry}
\usepackage{amsmath,amssymb,amsthm,mathtools,mathrsfs,bbm}
\usepackage{xcolor}

\usepackage[expansion=false]{microtype}
\usepackage[colorlinks=true,linkcolor=blue,citecolor=red,urlcolor=blue]{hyperref}
\hypersetup{
  pdftitle={Semiclassical Moser--Trudinger inequalities for orthonormal families},
  pdfauthor={Phuoc-Tai Nguyen and Tuan-Dat Tran},
  pdfsubject={Endpoint inequalities and semiclassical density limits}
}
\allowdisplaybreaks
\newtheorem{theorem}{Theorem}[section]
\newtheorem{lemma}[theorem]{Lemma}
\newtheorem{proposition}[theorem]{Proposition}
\newtheorem{corollary}[theorem]{Corollary}
\theoremstyle{remark}
\newtheorem{remark}[theorem]{Remark}

\newcommand{\R}{\mathbb R}
\newcommand{\N}{\mathbb N}
\newcommand{\cA}{\mathcal{A}}
\newcommand{\cE}{\mathcal E}
\newcommand{\cQ}{\mathcal{Q}}
\newcommand{\cT}{\mathcal{T}}
\newcommand{\scrH}{\mathscr{H}}
\newcommand{\scrQ}{\mathscr{Q}}
\newcommand{\Ei}{\operatorname{E_1}}

\newcommand{\1}{\mathbbm{1}}

\title[Semiclassical Moser--Trudinger inequalities ]{Semiclassical Moser--Trudinger type inequalities \\ for orthonormal families in $\mathbb{R}^2$}

\author[P.-T. Nguyen]{Phuoc-Tai Nguyen}
\address{Department of Mathematics and Statistics, Masaryk University, Kotl\'a\v rsk\'a 2, 61137 Brno, Czech Republic}
\email{ptnguyen@math.muni.cz}

\author[T. D. Tran]{Tuan-Dat Tran}
\address{Department of Mathematics and Statistics, Masaryk University, Kotl\'a\v rsk\'a 2, 61137 Brno, Czech Republic}
\email{554671@mail.muni.cz}

\numberwithin{equation}{section}

\begin{document}

\begin{abstract}
For $H^1(\R^2)$-orthonormal families $\{u_n\}_{n=1}^N$ with one-body density $\rho=\sum_{n=1}^N |u_n|^2$, we prove the whole-space endpoint Moser--Trudinger inequality
$$ \int_{\R^2}\frac{e^{4\pi\rho(x)}-1-4\pi\rho(x)}{\rho(x)}\,dx \le C\left(N-1+\int_{\R^2}\rho(x)\,dx\right),
$$
where the integrand on the left-hand side is defined to be zero on $\{x: \rho(x)=0\}$.  We develop a  heat-semigroup refinement of Rumin's density method to treat both the endpoint coefficient $4\pi$ and the whole-space setting. Our inequality is sharp in two respects: the optimality of the coefficient $4\pi$ and the linear growth in the rank $N$.  Moreover, through a semiclassical analysis of the  density generated by normalized Dirichlet eigenfunctions, we establish the collective asymptotic behaviors of the quantities appearing in the inequality in various regimes. These semiclassical behaviors show that the rank-dependent term $N-1$ on the right-hand side is indispensable.
 \medskip

\noindent\textit{2020 Mathematics Subject Classification:
26D10, 35P20, 35J20, 46E35}. \medskip

\noindent\textit{Keywords: endpoint Moser--Trudinger inequality; 
orthonormal families; semiclassical analysis; heat kernel; local Weyl
law; Rumin's method}
\end{abstract}

\maketitle

\setcounter{tocdepth}{1}
\tableofcontents

\section{Introduction}

\subsection{Background and setting}

For a bounded domain $\Omega\subset\R^2$, the Sobolev embedding $H_0^1(\Omega) \hookrightarrow L^p(\Omega)$ holds for every $p \in [1,\infty)$ but fails for $p=\infty$. In two-dimensional settings, the Moser--Trudinger inequality provides a substitute for the missing endpoint Sobolev embedding, yielding exponential integrability in place of an $L^\infty$ bound. The Moser--Trudinger
inequality, derived by Trudinger in
\cite{Trudinger67} and sharpened by Moser \cite{Moser71}, states that 
\begin{equation}  \label{eq:classical-mt}
 \sup_{0 \neq u\in H_0^1(\Omega), \,
                  \|\nabla u\|_{L^2(\Omega)}\leq 1}
 \frac{1}{|\Omega|}\int_\Omega e^{4\pi |u(x)|^2} \,dx < + \infty.
\end{equation}
Moreover, the constant $4\pi$ is optimal in the sense that if $4\pi$ is replaced by any $\alpha>4\pi$, then the supremum in \eqref{eq:classical-mt} becomes infinite.

A natural extension is to establish a counterpart of  \eqref{eq:classical-mt} on the whole space. However, on $\R^2$, the exponential $e^{4\pi |u|^2}$ in \eqref{eq:classical-mt} is non-integrable, even for functions $u$ with compact support.  While this can be remedied by considering the renormalized integrand $e^{4\pi |u|^2}-1$, another obstruction persists under the gradient constraint alone. Specifically, the gradient constraint $\|\nabla u\|_{L^2} \leq 1$ is invariant under the two-dimensional dilation $u_\lambda(x) = u(x/\lambda)$, whereas the integral of the renormalized exponential scales by a factor of $\lambda^2$. Consequently, the gradient constraint alone is insufficient to control the renormalized whole-space integral, and \eqref{eq:classical-mt} has several inequivalent counterparts on $\R^2$.

Adachi and Tanaka~\cite{AdachiTanaka00} proved that for any $0 < \alpha < 4\pi$ and any function $u \in H^1(\R^2)$ with $ \|\nabla u\|_{L^2} \leq 1$, one has
$$
 \int_{\R^2}(e^{\alpha|u|^2}-1)\,dx
 \le C_\alpha\|u\|_{L^2}^2.
$$
Subsequently, Ruf~\cite[Theorem~1.1]{Ruf05} obtained the endpoint inequality with coefficient $4\pi$ by replacing the homogeneous constraint $\| \nabla u \|_{L^2} \leq 1$ with the full Sobolev norm constraint $\| u \|_{H^1} \leq 1$, namely
\begin{equation}  \label{eq:ruf}
 \sup_{\|u\|_{H^1}\le1}
 \int_{\R^2}(e^{4\pi |u|^2}-1)\,dx < + \infty.
\end{equation}
Inequality \eqref{eq:ruf} fails if $4\pi$ is replaced by any larger number. Afterwards, Ibrahim, Masmoudi
and Nakanishi \cite{IMN15} demonstrated that 
\begin{equation} \label{eq:IMN-exact-growth-intro}
 \int_{\R^2}\frac{e^{4\pi|u|^2}-1}{(1+|u|)^2}\,dx
 \le C\|u\|_{L^2}^2 \quad \text{for all } u \in H^1(\R^2) \text{ with } \|\nabla u\|_{L^2} \leq 1.
\end{equation}
They also pointed out that the power $2$ in the denominator of
\eqref{eq:IMN-exact-growth-intro} is optimal, namely it cannot be replaced by any strictly smaller power. In addition, they showed that
\eqref{eq:IMN-exact-growth-intro} implies~\eqref{eq:ruf} by using an argument based on H\"older's inequality. 

A second line of generalization of~\eqref{eq:classical-mt}, motivated by  Lieb--Thirring and semiclassical spectral inequalities, replaces the single function $u$ by
a family of $N$ orthonormal functions $\{u_n\}_{n=1}^N$ 
and seeks bounds on an
exponential-type functional of the one-body density
\begin{equation} \label{eq:one-body-density} \rho(x)=\sum_{n=1}^N|u_n(x)|^2.
\end{equation}
Such semiclassical Moser--Trudinger
inequalities interpolate between the single-function case $N=1$ and Weyl-law statements about the collective behavior of many
eigenfunctions as $N\to\infty$. They serve as a two-dimensional, exponential-type counterpart of the Lieb--Thirring inequality for orthonormal families \cite{Lieb83}. This is in the same spirit as the fact that  Solomyak's
Orlicz-norm inequality~\cite{Solomyak94} is a  two-dimensional
substitute for the Cwikel--Lieb--Rozenblum inequality~\cite{Frank14} which fails in two dimensions. Frank and Laptev~\cite{FrankLaptev18} obtained a
version with constants independent of the Dirichlet domain. By duality,
their result yields constants $a,C>0$ such that, for $N\ge2$ and a
Dirichlet-form orthonormal family on a bounded domain $\Omega$,
$$
 \frac{1}{|\Omega|}\int_\Omega
 \exp\!\left(\frac{a\rho(x)}{\ln N}\right)\,dx\le C.
$$
A broad account of functional inequalities for orthonormal systems is available (see Frank~\cite{Frank22}).  

Arora, Nam and Nguyen~\cite{ANN} sharpened this two-dimensional picture and made the semiclassical constant explicit. They demonstrated that for $N$ functions $\{u_n\} \subset H_0^1(\Omega)$ satisfying
\begin{equation} \label{eq:ann-gradient-orthonormal}
 \int_{\Omega} \nabla u_n \cdot \overline{\nabla u_m}\,dx
 = \delta_{nm}, \quad 1 \leq n,m \leq N,
\end{equation}
and any $\varepsilon \in (0,1/4)$, one has
\begin{equation} \label{eq:ann-subcritical}
 \frac{1}{|\Omega|}\int_\Omega
 e^{4\pi(1-\varepsilon)\rho(x)}\,dx
 \le \frac{e^8}{\varepsilon^4}N.
\end{equation}
Moreover, for any $\alpha>0$, 
\begin{equation} \label{eq:ann-corollary}
\lim_{N \to \infty} \left( \sup_{ \{u_n\}_{n=1}^N } \frac{1}{|\Omega|}\int_\Omega\exp\Big(\alpha\,\frac{\rho}{\ln N}\Big)\,dx \right)
= e^{\frac{\alpha}{4\pi}},
\end{equation}
where the supremum is taken over all  families satisfying \eqref{eq:ann-gradient-orthonormal}. Their method, inspired by
Rumin~\cite{Rumin10}, combines the boundedness of $\Omega$ for
low-momentum control with Bessel's inequality at intermediate momenta
to handle the logarithmic divergence in two dimensions. The constant
$1/(4\pi)$ in \eqref{eq:ann-corollary} is determined by the two-dimensional Weyl law for the
Dirichlet Laplacian. We emphasize that
\eqref{eq:ann-gradient-orthonormal} involves only the gradients of the functions, and estimate \eqref{eq:ann-subcritical} is subcritical and does not reach the endpoint coefficient $4\pi$.

Combining these two independent generalizations to the whole space and to orthonormal families poses significant challenges. The mechanism in~\cite{ANN}, which relies on Poincar\'e's inequality on
bounded domains, fails on $\R^2$, and a
dilation argument shows that no
bound of the form~\eqref{eq:ann-subcritical} can hold on $\R^2$ under
gradient orthonormality~\eqref{eq:ann-gradient-orthonormal}, let alone
an endpoint inequality. In this paper, we address endpoint
Moser--Trudinger inequalities for families orthonormal with respect to
the $H^1$ inner product.

\subsection{Main results}
Let $\cA:=-\Delta+1$ on $L^2(\R^2)$. A family
$\{u_n\}_{n=1}^N \subset H^1(\R^2)$ ($N \geq 1$) is called $H^1$-orthonormal if
$$
 \int_{\R^2}\big(\nabla u_n\cdot\nabla\overline{u_m}
       +u_n\overline{u_m}\big)\,dx
 =\delta_{nm}, \quad 1 \leq m,n \leq N.
$$
The one-body density of $\{u_n\}_{n=1}^N$ is defined in
\eqref{eq:one-body-density}. For $\alpha>0$, define 
\begin{equation}  \label{eq:Theta-def}
 \Theta_\alpha(s):=
 \begin{cases}
 \dfrac{e^{\alpha s}-1-\alpha s}{s},&s>0, \\[2mm]
 0,&s=0.
 \end{cases}
\end{equation}

Our first result provides an endpoint  Moser--Trudinger type inequality for the density of $H^1$-orthonormal families.
\begin{theorem}[Whole-space endpoint Moser--Trudinger type inequality]
\label{thm:intro-whole-space}
There is a universal constant $C>0$ such that for any
$H^1$-orthonormal family $\{u_n\}_{n=1}^N \subset H^1(\R^2)$ with
density $\rho$, one has
\begin{equation}  \label{eq:rank-one-defect}
 \int_{\R^2}\Theta_{4\pi}(\rho)\,dx
 \le C\left(N-1+\int_{\R^2}\rho\,dx\right).
\end{equation}
The endpoint coefficient $4\pi$ in \eqref{eq:rank-one-defect} is sharp
in the sense that for every $\alpha>4\pi$ and every
$N\ge1$,
\begin{equation}
 \sup_{\substack{\{u_n\}_{n=1}^N\subset H^1(\R^2)\\
                   H^1\text{-orthonormal}}}
 \int_{\R^2}\Theta_\alpha\!\left(\sum_{n=1}^N|u_n|^2\right)\,dx
 =\infty.
 \label{eq:intro-alpha-sharp}
\end{equation}
For $N\ge1$, put
\begin{equation}
 K_N=\sup\left\{
 \int_{\R^2}\Theta_{4\pi}\!\left(\sum_{n=1}^N|u_n|^2\right)\,dx\;\middle|\;
 \{u_n\}_{n=1}^N\text{ is $H^1$-orthonormal}\right\}.
 \label{eq:KN-definition}
\end{equation}
Then there exist two universal constants $c_1,c_2>0$ such that for any $N \geq 1$,
\begin{equation} \label{eq:KN-two-sided}
 c_1 N \leq K_N \leq c_2 N.
\end{equation}
\end{theorem}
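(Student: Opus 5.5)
The plan is to prove \eqref{eq:rank-one-defect} by a Rumin-type density argument driven by the heat semigroup of $\cA$, and then to obtain \eqref{eq:intro-alpha-sharp} and \eqref{eq:KN-two-sided} from explicit test families.

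\textbf{Upper bound.} Write $u_n=\cA^{-1/2}v_n$ with $\{v_n\}$ orthonormal in $L^2(\R^2)$, and let $\gamma=\sum_n|v_n\rangle\langle v_n|$, so that $0\le\gamma\le1$, $\operatorname{tr}\gamma=N$, and $\int\rho=\operatorname{tr}(\cA^{-1}\gamma)$. Use the representation $\cA^{-1}=\int_0^\infty e^{-t\cA}\,dt$ and split it at a threshold $T(x)$ that depends on the point:
\[
 \rho(x)=\rho_{<T}(x)+\rho_{>T}(x),\qquad \rho_{<T}=\Big(\int_0^{T}\Big)\text{-part},\quad \rho_{>T}=\Big(\int_T^\infty\Big)\text{-part}.
\]
\begin{itemize}
\item \emph{Short times.} Bessel's inequality gives $\sum_n|(e^{-t\cA/2}v_n)(x)|^2\le e^{-t\cA}(x,x)=e^{-t}/(4\pi t)$. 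Integrating from a small cutoff $\tau$ to $T$ produces the logarithm $\frac1{4\pi}\ln(T/\tau)$. This is exactly where the constant $4\pi$ enters.
\item \emph{Long times.} Bound $\rho_{>T}(x)$ by a density of the form $\sum_n|(e^{-T\cA/2}\cA^{-1/2}v_n)(x)|^2\le \sum_n\| \cdot\|$-type quantities. These are controlled pointwise by $e^{-T}/(4\pi T)\cdot N$, and in $L^1$ by $\int\rho$.
\item \emph{Very short times} ($t<\tau$). This piece is tiny.
\end{itemize}
Choose $T(x)$ Rumin-style, so that the long-time contribution is at most a small part of $\rho(x)$. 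This gives a pointwise inequality of the form
\[
 \rho(x)\le \tfrac1{4\pi}\ln\!\big(1+F(x)\big)+O(1),
\]
where $F$ is an auxiliary maximal-type function with $\int F\lesssim N-1+\int\rho$. Then $e^{4\pi\rho}\lesssim 1+F$.

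To pass to $\Theta_{4\pi}$, use that $\Theta_{4\pi}(\rho)\approx e^{4\pi\rho}/\rho$ for large $\rho$ and $\Theta_{4\pi}(\rho)\approx \rho$ for small $\rho$. The exact $O(1)$ in the exponent is what must be won: the subtraction $-1-4\pi\rho$ and the division by $\rho$ should absorb a factor $\ln F$.

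The main obstacle is this endpoint case: one cannot lose any $\varepsilon$ in the coefficient. I would first try to refine the heat decomposition using the extra decay $e^{-t}$ coming from the ``$+1$'' in $\cA$, as in Ruf's and Ibrahim--Masmoudi--Nakanishi's whole-space analysis.

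\textbf{Where the term $N-1$ comes from.} The rank-one part is handled by reducing to the inequality of Ibrahim--Masmoudi--Nakanishi \eqref{eq:IMN-exact-growth-intro}. In that estimate the denominator $(1+|u|)^2\sim 1+\rho$ plays the role of the $1/\rho$ weight in $\Theta_{4\pi}$. The additional $N-1$ then arises from the trace of $\gamma$ beyond the top state, which enters through the bound $\operatorname{tr}\gamma-1$ in the long-time estimate. For the upper bound $K_N\le c_2N$, use $\int\rho\le N$, which follows from $\|u_n\|_{L^2}\le\|u_n\|_{H^1}=1$.

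\textbf{Sharpness and lower bound.} For \eqref{eq:intro-alpha-sharp}, take Moser functions $u_1=m_k$ concentrating at the origin, normalized in $H^1$. Add $N-1$ further functions, either at far-separated centers or rescaled with small overlap. Gram--Schmidt with these changes $\rho$ only by $o(1)$ near the origin. Since $\int e^{\alpha m_k^2}/m_k^2\to\infty$ for $\alpha>4\pi$, the supremum is infinite.

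For $K_N\ge c_1N$, take $N$ translates of a fixed bump $\varphi$, normalized in $H^1$, at mutually distant centers. They are almost orthonormal, and after orthonormalization
\[
 \int\Theta_{4\pi}(\rho)\ge N\Big(\int\Theta_{4\pi}(|\varphi|^2)-o(1)\Big).
\]
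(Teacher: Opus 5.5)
\textbf{The gap: the case $N\ge2$ of the upper bound is not proved.} Your sharpness constructions are fine and match the paper's. Neither needs Gram--Schmidt or $o(1)$ bookkeeping. Take the other $N-1$ functions with pairwise disjoint supports outside $\overline{B_1}$, and the $N$ translates of a $C_c^\infty$ bump farther apart than its diameter. These families are then exactly $H^1$-orthonormal, and monotonicity of $\Theta_\alpha$ does the rest. The case $N=1$ of \eqref{eq:rank-one-defect} also follows from \eqref{eq:IMN-exact-growth-intro}, since $\Theta_{4\pi}(s)\le16\pi\,(e^{4\pi s}-1)/(1+\sqrt s)^2$. For $N\ge2$ you need the rank-only bound $\int_{\R^2}\Theta_{4\pi}(\rho)\,dx\le CN$, and the sketch would not give it as written.

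\emph{The splitting is not available.} You cannot write $\rho=\rho_{<T}+\rho_{>T}$ via $\cA^{-1}=\int_0^\infty e^{-t\cA}\,dt$. Here $\rho(x)$ is the diagonal of $\cA^{-1/2}\gamma\cA^{-1/2}$, and this operator differs from $\int_0^\infty e^{-t\cA/2}\gamma e^{-t\cA/2}\,dt$ unless $\gamma$ commutes with $\cA$. You must split each $u_n$ linearly and use the triangle inequality in $\ell^2$.

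\emph{The roles of the pieces are reversed.} Bessel's inequality gives a finite pointwise bound only for the smoothed density, $\rho_t=\sum_n|e^{-t\cA}u_n|^2\le\frac1{4\pi}\Ei(2t)\le\frac1{4\pi}\ln(1+\frac1{2t})$. The high-frequency remainder $u_n-e^{-t\cA}u_n$, which contains your ``tiny'' $t<\tau$ piece, has an infinite Bessel bound in two dimensions and is not pointwise small. It is the piece that must carry the excess of $\rho(x)$ over the logarithm. It is controlled only in integrated form, through $\sum_n\int_0^\infty\|(1-e^{-t\cA})u_n\|_{L^2}^2\,t^{-2}\,dt=2\ln 2\,N$.

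\emph{Your pointwise target is false.} You aim for $\rho\le\frac1{4\pi}\ln(1+F)+O(1)$ with $\int F\lesssim N-1+\int\rho$. At $N=1$ this would give $\int_{\R^2}(e^{4\pi|u|^2}-1)\,dx\lesssim\|u\|_{L^2}^2$ whenever $\|u\|_{H^1}=1$. Your own $H^1$-normalized Moser functions violate this: the left side stays bounded below by a positive constant while $\|u_k\|_{L^2}^2\to0$. This is exactly why the weight $1/\rho$ in $\Theta_{4\pi}$ (IMN's power $2$) is needed.

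\emph{The missing step is integrated, not pointwise.} For each $x$, restrict the $t$-integral of $(\sqrt{\rho}-\sqrt{\rho_t})^2t^{-2}$ to the set where $\frac1{4\pi}\ln(1+\frac1{2t})<\rho(x)$. Then use the exact lower bound $\int_0^\varrho e^s(\sqrt\varrho-\sqrt s)^2\,ds\ge c\,(\frac{e^\varrho}{1+\varrho}-1)$ and integrate in $x$. This gives $\int(\frac{e^{4\pi\rho}}{1+4\pi\rho}-1)\,dx\le CN$, which reaches $4\pi$ with no $\varepsilon$-loss. Together with $\int\rho\le N$ it yields $\int\Theta_{4\pi}(\rho)\le CN$. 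This is precisely the step you leave open as ``the main obstacle''.

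\emph{The term $N-1$ needs no mechanism.} Your ``trace of $\gamma$ beyond the top state'' is not an argument. For $N\ge2$ the rank-only bound suffices because $N\le2(N-1)$, and $N=1$ is the scalar case.
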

The proof of Theorem~\ref{thm:intro-whole-space} is completed in
Subsection~\ref{subsec:endpoint}.

The right-hand side of~\eqref{eq:rank-one-defect}
is a mixed rank--mass
quantity.  The $H^1$-orthonormality gives
$\int_{\R^2}\rho\,dx \leq N$, which in turn leads to 
$N-1+\int_{\R^2}\rho\,dx \leq 2N-1$. Therefore,
\eqref{eq:rank-one-defect} implies a rank-only  bound
 \[ \int_{\R^2}\Theta_{4\pi}(\rho)\,dx
 \le C N,
\]
with another universal constant $C$.
Moreover, the term $N-1$ is needed in the sense that no universal constant can bound the left-hand side by the mass alone (see Proposition~\ref{prop:no-rank-uniform-mass}). 

Theorem~\ref{thm:intro-whole-space} exhibits two distinct sharpness statements. Inequality
\eqref{eq:intro-alpha-sharp} asserts that increasing $4\pi$ to any
$\alpha>4\pi$ makes the supremum infinite even when $N=1$, hence orthonormality cannot improve the endpoint coefficient $4\pi$.  At the
endpoint coefficient, the order of $N$ on the right-hand side of \eqref{eq:rank-one-defect} is sharp, as shown by
\eqref{eq:KN-two-sided}. As a consequence of
\eqref{eq:rank-one-defect}, one gets the subcritical estimate (see Corollary~\ref{cor:ruf-subcritical}) which states that there exists a universal $C>0$ such that for any $\varepsilon \in (0,1/2]$ and any $H^1$-orthonormal family $\{u_n\}_{n=1}^N \subset H^1(\R^2)$ with
density $\rho$, 
\begin{equation} \label{est:subcrt}
 \int_{\R^2}(e^{4\pi(1-\varepsilon)\rho}-1)\,dx
 \le \frac{C}{\varepsilon}
 N^{1-\frac{\varepsilon}{2}}
 \left(\int_{\R^2}\rho\,dx\right)^{\frac{\varepsilon}{2}}.
\end{equation}
The bound \eqref{est:subcrt} can be viewed as a whole-space counterpart of \eqref{eq:ann-subcritical} with $\varepsilon^{-1}$ dependence and mixed rank--mass factor
$N^{1-\varepsilon/2}\left(\int_{\R^2}\rho\,dx\right)^{\varepsilon/2}$.

When $N=1$, estimate~\eqref{eq:rank-one-defect} reduces to
\begin{equation} \label{eq:critical-N-one}
 \int_{\R^2}\Theta_{4\pi}(|u|^2)\,dx\le C\|u\|_{L^2}^2 \quad \text{for all } u \in H^1(\R^2) \text{ with }
\|u\|_{H^1}=1.
\end{equation}
It is worth pointing out that estimate \eqref{eq:critical-N-one} is equivalent to the Ibrahim--Masmoudi--Nakanishi inequality \eqref{eq:IMN-exact-growth-intro} and also implies Ruf's inequality \eqref{eq:ruf} (see
Corollary~\ref{cor:ruf-from-exact-growth}, together with the dilation argument in Remark~\ref{rem:scalar-comparisons}).

We further investigate inequality \eqref{eq:rank-one-defect} through a semiclassical analysis of the density generated by normalized Dirichlet eigenfunctions in different regimes.

Throughout the paper, $B_R(x)$ is the open ball of radius $R$ centered at $x$, and $B_R:=B_R(0)$. The Dirichlet Laplacian $-\Delta$ on $B_1$ has eigenvalues
$\mu_1\leq\mu_2\leq\cdots$
(counting multiplicities) and corresponding eigenfunctions
$\{\psi_j\}_{j\ge1}\subset H_0^1(B_1)$ forming an orthonormal basis of
$L^2(B_1)$.

For $R>0$, define 
 \begin{equation} \label{u_j^(R)}
 u_j^{(R)}(x):=
 \begin{cases}
 R^{-1}(1+\mu_j/R^2)^{-1/2}\psi_j(x/R),&x\in B_R,\\[1mm]
 0,&x\in\R^2\setminus B_R.
 \end{cases}
\end{equation}
They form an $H^1(\R^2)$-orthonormal family (see Lemma~\ref{lem:dirichlet-family}). Let \begin{equation} \label{rho_NR}
\rho_{N,R}:=\sum_{j=1}^N|u_j^{(R)}|^2
\end{equation} 
be the density of $\{u_j^{(R)}\}_{j=1}^N$. The terms $\int_{\R^2}\rho_{N,R}\, dx$ and $\int_{\R^2}\Theta_{4\pi}(\rho_{N,R})\, dx$ are referred to as the \textit{mass} and the \textit{energy}, respectively.

Let $\{\kappa_N\}_{N\ge1}$ be a sequence of positive numbers and set $R_N=\sqrt{\kappa_N N}$. Since $N/|B_{R_N}|=1/(\pi\kappa_N)$, $1/\kappa_N$ measures the number of states per unit area (up to the factor $1/\pi$). Therefore the asymptotic behavior of $\{\kappa_N\}_{N \geq 1}$ determines the density regime. More precisely, the \emph{intermediate-density regime} corresponds to
$\kappa_N \to \kappa \in (0,\infty)$. The \emph{low-density}, or \emph{dilute}, regime corresponds to $\kappa_N \to \infty$, whereas the \emph{high-density regime} corresponds to $\kappa_N \to 0$ and $R_N\to\infty$.

The next theorem gives the limit of the mass and endpoint energy per state
in the  intermediate-density regime.

 \begin{theorem}[Intermediate-density regime]
\label{thm:intro-intermediate-density}
Let $\{\kappa_N\}_{N\ge1}$ be a sequence of positive numbers such that
$\kappa_N \to \kappa \in (0,\infty)$ and set $R_N=\sqrt{\kappa_N N}$.
Let $\{u_j^{(R_N)}\}_{j=1}^N$ and $\rho_{N,R_N}$ be defined by
\eqref{u_j^(R)} and \eqref{rho_NR}, respectively. Then
 \begin{equation} \label{lim:Dirichlet}
 \lim_{N \to \infty}\frac{1}{N}\int_{\R^2}\rho_{N,R_N}\,dx
=  \frac{\kappa}{4}\ln\Big(1+\frac{4}{\kappa}\Big).
\end{equation}
Moreover,
 \begin{equation} \label{est:coupling}
\lim_{N \to \infty} \frac{1}N\int_{\R^2}
 \Theta_{4\pi}(\rho_{N,R_N}(x))\,dx
 =\frac{4\pi^2\bigl(4-\kappa\ln(1+4/\kappa)\bigr)}{\ln(1+4/\kappa)}.
\end{equation}
\end{theorem}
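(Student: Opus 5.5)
The plan is to reduce both limits to Weyl-type asymptotics for the Dirichlet Laplacian on $B_1$, after rescaling $x=Ry$. For the mass, scaling gives
$$
\int_{\R^2}\rho_{N,R}\,dx=\sum_{j=1}^N\Big(1+\frac{\mu_j}{R^2}\Big)^{-1}.
$$
The global Weyl law on $B_1$ (area $\pi$) gives $\mu_j\sim 4j$. With $R_N^2=\kappa_N N$ we have $\mu_j/R_N^2=\frac{4(j/N)}{\kappa}+o(1)$ uniformly in $j\le N$. This is immediate when $j/N$ stays away from $0$; for small $j$ both sides are close to $0$ anyway. Hence the sum is a Riemann sum, and
$$
\frac1N\int\rho_{N,R_N}\,dx\to\int_0^1\frac{dt}{1+4t/\kappa}=\frac\kappa4\ln\Big(1+\frac4\kappa\Big),
$$
which is \eqref{lim:Dirichlet}.

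For the energy I will show that $\rho_{N,R_N}(Ry)$ converges to the constant
$$
\rho_0:=\frac{1}{4\pi}\ln\Big(1+\frac4\kappa\Big)
$$
for each fixed $y\in B_1$, and that it is uniformly bounded. Write $e(\Lambda,y):=\sum_{\mu_j\le\Lambda}\psi_j(y)^2$ for the spectral function. Then
$$
\rho_{N,R}(Ry)=R^{-2}\int_{[0,\mu_N]}\Big(1+\frac{\lambda}{R^2}\Big)^{-1}\,de(\lambda,y),
$$
with a correction from only partially including the eigenspace at $\mu_N$. This correction is bounded by the jump $e(\mu_N,y)-e(\mu_N^-,y)$. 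The local Weyl law I need is $e(\Lambda,y)=\Lambda/(4\pi)+o(\Lambda)$, locally uniformly in $y\in B_1$. To get it, I combine the heat-kernel asymptotics $p_t^{D}(y,y)=\sum_j e^{-t\mu_j}\psi_j(y)^2=(4\pi t)^{-1}(1+o(1))$ as $t\to0$, uniform on compacts of $B_1$ (by comparison with the free kernel and the principle of not feeling the boundary), with Karamata's Tauberian theorem. The jump correction is then $o(\Lambda)$, hence $o(N)$, and after the factor $R^{-2}$ it is $o(1)$. Integrating by parts and using $\mu_N/R_N^2\to4/\kappa$ gives
$$
\rho_{N,R_N}(R_Ny)\to\frac{1}{4\pi}\int_0^{4/\kappa}\frac{ds}{1+s}=\rho_0 .
$$

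The uniform bound comes from the Dirichlet heat kernel being dominated by the free one, $p_t^D(y,y)\le(4\pi t)^{-1}$. Taking $t=1/\Lambda$ gives $e(\Lambda,y)\le e\,\Lambda/(4\pi)$ for all $y\in B_1$. The same summation by parts then yields
$$
\rho_{N,R_N}\le\frac{e}{4\pi}\ln\Big(1+\frac{\mu_N}{R_N^2}\Big)+o(1)\le C
$$
uniformly in $N$ (the $o(1)$ also absorbs the small-$\Lambda$ range via $e(\Lambda,y)\le e\mu_1/(4\pi)$ for $\Lambda<\mu_1$). Since $\Theta_{4\pi}$ is continuous and bounded on $[0,C]$ and $\rho_{N,R_N}$ vanishes outside $B_{R_N}$, we have
$$
\frac1N\int\Theta_{4\pi}(\rho_{N,R_N})\,dx=\frac{R_N^2}{N}\int_{B_1}\Theta_{4\pi}\big(\rho_{N,R_N}(R_Ny)\big)\,dy .
$$
By dominated convergence this tends to $\kappa\pi\,\Theta_{4\pi}(\rho_0)$. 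Using $e^{4\pi\rho_0}=1+4/\kappa$ and $4\pi\rho_0=\ln(1+4/\kappa)$, this evaluates to
$$
\pi\kappa\cdot\frac{4/\kappa-\ln(1+4/\kappa)}{\ln(1+4/\kappa)/(4\pi)},
$$
which is exactly the right-hand side of \eqref{est:coupling}. The same pointwise limit, integrated over $B_1$, also recovers \eqref{lim:Dirichlet} as a consistency check.

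The main obstacle is the local Weyl law: the Tauberian step needs uniformity on compact subsets of $B_1$, and it must also give control of the spectral jumps at $\mu_N$, which are large because of the multiplicities on the disk. Pointwise convergence on the open ball together with the uniform bound suffices, so no boundary-layer asymptotics are needed.
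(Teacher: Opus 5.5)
Your proof is correct. For the mass limit \eqref{lim:Dirichlet} it is the paper's argument: the mass identity, Weyl's law and a Riemann sum. For the energy limit \eqref{est:coupling} you take a genuinely different route.

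The paper's route is as follows. It proves \emph{uniform} convergence $\rho_{N,R_N}\to\rho_\infty(\kappa)$ on $B_{(1-\delta)R_N}$ (Lemma~\ref{lem:pointwise-bulk}), using H\"ormander's local Weyl law with remainder $O(\sqrt\Lambda)$ (Proposition~\ref{prop:hormander-local-weyl}). It treats the partially selected top eigenspace by cutting at $\mu_N^{<}$ and invoking Lemma~\ref{lem:sublinear-cluster}. It then closes with a Jensen lower bound (Lemma~\ref{lem:jensen-lower}) and an upper bound that splits off a boundary annulus of relative width $\delta$. That annulus is controlled by the cruder bound $\rho_{N,R_N}\le\frac{\ln 2}{4\pi}e^{\lambda_N(R_N)}$ of Lemma~\ref{lem:uniform-rhoN}.

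Your route differs at each of these steps:
\begin{itemize}
\item You replace H\"ormander by Kac's principle plus Karamata's Tauberian theorem. This gives only $e(\Lambda,y)=\Lambda/(4\pi)+o(\Lambda)$, but that suffices.
\item The same $o(\Lambda)$ law forces $e(\mu_N,y)-e(\mu_N^-,y)=o(\mu_N)$, so no cluster lemma is needed.
\item Dominated convergence on $B_1$ replaces both the Jensen step and the $\delta$-splitting.
\item Your global bound $\rho_{N,R_N}\le\frac{e}{4\pi}\ln(1+\mu_N/R_N^2)$, from $e(\Lambda,y)\le e\Lambda/(4\pi)$ and summation by parts, holds exactly (your extra $o(1)$ is superfluous). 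It is also sharper than the paper's.
\end{itemize}

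Because you finish with dominated convergence, Karamata applied at each fixed $y\in B_1$ already suffices. So the uniformity you single out as the main obstacle is not actually needed. Also, the jumps at $\mu_N$ are not large because of multiplicities: Dirichlet eigenvalues of the disk have multiplicity at most two. A single $|\psi_j(y)|^2$ can, however, be of size about $\sqrt{\mu_j}$ near the center. Your $o(\Lambda)$ control covers this either way.

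What the paper's route buys is quantitative information. It gives an explicit $O(N^{-1/2})$ interior remainder and uniform interior convergence. Its projector estimates in Lemma~\ref{lem:dilute-projector} are also reused in the low-density regime. Your route is more elementary and self-contained, relying only on heat-kernel comparison rather than the sharp local Weyl law, but it is purely qualitative.
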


The proof of Theorem~\ref{thm:intro-intermediate-density} is completed in
Subsection~\ref{subsec:fixed-energy}. 

Interestingly, the family $\{u_j^{(R_N)}\}_{j=1}^N$  asymptotically attains the maximal mass per state localized in  $B_{R_N}$ over all $N$-element
$H^1$-orthonormal families, as stated precisely below.

For
$R>0$, define
\begin{equation} \label{eq:intro-DN-def}
 \mathfrak{m}_N(R):=\sup\left\{\int_{B_R}\sum_{n=1}^N|u_n(x)|^2\,dx:
 \{u_n\}_{n=1}^N\text{ is $H^1$-orthonormal in }H^1(\R^2)\right\}.
\end{equation}

\begin{theorem}[Asymptotic variational optimality in the intermediate-density regime]
\label{thm:intro-variational-optimality}
Let $\{\kappa_N\}_{N\ge1}$ be a sequence of positive numbers such that
$\kappa_N\to\kappa\in(0,\infty)$ and set $R_N=\sqrt{\kappa_N N}$. Then
\begin{equation}  \label{eq:variational-optimality}
 \lim_{N\to\infty}\frac{\mathfrak{m}_N(R_N)}N
=\frac{\kappa}{4}\ln\left(1+\frac{4}{\kappa}\right).
\end{equation}
\end{theorem}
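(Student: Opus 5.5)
The lower bound $\liminf_N \mathfrak{m}_N(R_N)/N\ge \frac{\kappa}{4}\ln(1+4/\kappa)$ comes directly from Theorem~\ref{thm:intro-intermediate-density}. The family $\{u_j^{(R_N)}\}_{j=1}^N$ is $H^1$-orthonormal and supported in $\overline{B_{R_N}}$, so its mass is admissible in \eqref{eq:intro-DN-def}, and \eqref{lim:Dirichlet} gives the bound. The real work is the matching upper bound, which I will treat as a trace maximization problem.

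Write $\gamma=\sum_{n}|\mathcal{A}^{1/2}u_n\rangle\langle \mathcal{A}^{1/2}u_n|$. This is a rank-$N$ orthogonal projection on $L^2(\R^2)$, and
\[
\int_{B_R}\rho\,dx=\operatorname{Tr}\big(\gamma\,\mathcal{A}^{-1/2}\1_{B_R}\mathcal{A}^{-1/2}\big).
\]
By the Ky Fan principle, $\mathfrak{m}_N(R)$ equals the sum of the $N$ largest eigenvalues of the compact operator $T_R=\mathcal{A}^{-1/2}\1_{B_R}\mathcal{A}^{-1/2}$. Equivalently, it is the sum of the $N$ largest eigenvalues $\lambda$ of the generalized problem $\1_{B_R}u=\lambda(-\Delta+1)u$. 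The supremum is attained, but I only need an upper estimate.

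For the upper bound I will use the Legendre-type inequality: for every $\mu>0$,
\[
\sum_{j\le N}\lambda_j(T_R)\le N\mu+\operatorname{Tr}(T_R-\mu)_+ .
\]
Moreover $(T_R-\mu)_+$ has the same nonzero spectrum, counted by $\lambda\mapsto(\lambda-\mu)_+$, as the Birman--Schwinger-type pencil. Concretely, $\operatorname{Tr}(T_R-\mu)_+=\int_\mu^\infty \#\{\lambda_j>t\}\,dt$, and $\#\{\lambda_j>t\}$ equals the number of negative eigenvalues of $-\Delta+1-t^{-1}\1_{B_R}$, by the Birman--Schwinger principle. Rescaling $x=Ry$ turns this operator into $R^{-2}\big(-\Delta_y+R^2-R^2t^{-1}\1_{B_1}\big)$, which is a semiclassical Schrödinger operator with $h=1/R$ and a piecewise constant potential. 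The number of its negative eigenvalues is bounded above, uniformly, by a Weyl term plus an error. A Cwikel--Lieb--Rozenblum bound fails in two dimensions, so instead I will use Dirichlet--Neumann bracketing on $B_{1}$ and $B_{1+\delta}$ with an exterior region. The exterior region carries a positive potential $R^2$ and contributes nothing. The leading count is $\frac{R^2}{4\pi}|B_1|(t^{-1}-1)_+$, and the boundary layer contributes $o(R^2)$. Integrating in $t$ gives
\[
\operatorname{Tr}(T_R-\mu)_+\le \frac{R^2}{4}\big(\ln(1/\mu)-1+\mu\big)_+ +o(R^2).
\]
Dividing by $N=R_N^2/\kappa_N$ and optimizing, the choice is $\mu=\kappa/(4+\kappa)$, i.e.\ $\mu$ with $\frac{\kappa}{4}(\mu^{-1}-1)=1$. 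With this $\mu$ the bound becomes $\mu+\frac{\kappa}{4}(\ln(1/\mu)-1+\mu)$, which simplifies to $\frac{\kappa}{4}\ln(1+4/\kappa)$. This matches \eqref{lim:Dirichlet}, as it should, because the Dirichlet eigenfunctions are asymptotically optimal.

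The main obstacle is making the Weyl count uniform in $t$ near $0$. There $t^{-1}$ is huge and the error terms of the bracketing could fail to be integrable in $t$. Since we integrate only over $t\ge\mu>0$ with $\mu$ fixed, the relevant $t$ lie in the compact range $[\mu,\|T_R\|]\subset[\mu,1]$, because $\|T_R\|\le1$. For $t$ in this range the Dirichlet/Neumann Weyl asymptotics on $B_{1+\delta}$ hold uniformly: their remainders are $O(R\cdot t^{-1/2})$ by standard disc eigenvalue counting. If uniformity becomes awkward, I will fall back on Berezin--Li--Yau for the Dirichlet part of the lower bound and on the Kröger/Neumann upper bound on a slightly larger ball, which give non-asymptotic inequalities. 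Letting $\delta\to0$ finishes the proof.
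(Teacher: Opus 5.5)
Your proof is correct. The lower bound is, in substance, the paper's Lemma~\ref{lem:variational-lower} combined with Lemma~\ref{lem:density-family}. For the upper bound you take a genuinely different route.

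\textbf{The paper's upper bound.} After the same Ky Fan reduction, the paper compares $\cQ_R$ with Dirichlet resolvents eigenvalue by eigenvalue. An IMS localization with a cutoff equal to $1$ on $B_R$ and supported in $B_{R(1+\delta)}$ gives $\nu_k(\cQ_R)\le(1+C_2/(\delta R)^2)\lambda_k(R(1+\delta))^{-1}$. With $\delta_N=R_N^{-1/2}$, everything reduces to the Dirichlet sums already computed in Lemma~\ref{lem:density-family}. So the paper uses only the Dirichlet Weyl law and obtains bounds on individual eigenvalues.

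\textbf{Your upper bound.} You linearize the top-$N$ sum through $N\mu+\operatorname{Tr}(T_R-\mu)_+$ and then count eigenvalues via Birman--Schwinger and Neumann bracketing. Unwinding the $t$-integral, your bound reads
\[
\mathfrak{m}_N(R)\le N\mu+\sum_j\big((1+\mu_j^{\rm N}/R^2)^{-1}-\mu\big)_+ ,
\]
where $\mu_j^{\rm N}$ are the Neumann eigenvalues of $B_1$. This is the Legendre majorant of the form inequality $\cQ_R\le(\cA_R^{\rm N})^{-1}$. That inequality follows immediately by enlarging the supremum in \eqref{eq:Q_R-quadratic-form} from $H^1(\R^2)$ to $H^1(B_R)$. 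It even gives $\nu_k(\cQ_R)\le(1+\mu_k^{\rm N}/R^2)^{-1}$ with no IMS error and no enlarged ball.

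\textbf{What each route buys.} Your route needs one extra spectral input, the Neumann Weyl law on the disc. In return, the constant emerges transparently as the minimum of $\mu+\frac{\kappa}{4}(\ln(1/\mu)-1+\mu)$, attained at $\mu=\kappa/(4+\kappa)$. It also does not rely on the Dirichlet mass computation.

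\textbf{Simplifications.} You can place the Neumann cut exactly at $\partial B_1$ in the rescaled variables, because the exterior operator $-\Delta^{\rm N}+R^2$ is positive. Only the Neumann half of the bracketing is needed for an upper bound, and no limit $\delta\to0$ is required. Uniformity in $t\in[\mu,1]$ needs no remainder estimate. The one-term Neumann Weyl law gives $\#\{\mu_j^{\rm N}<\Lambda\}\le(\frac14+\epsilon)\Lambda+C_\epsilon$ for all $\Lambda\ge0$, and here $\Lambda=R^2(t^{-1}-1)\le R^2/\mu$, so the error is $o(R^2)$ uniformly.

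\textbf{Your fallbacks would not work.} Kröger's inequality bounds averages of Neumann eigenvalues from above, equivalently Neumann Riesz means from below. That is the wrong direction for an upper bound on the counting function. Berezin--Li--Yau plays no role either, since your lower bound comes from the explicit Dirichlet family. Fortunately neither is needed.
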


The proof of Theorem \ref{thm:intro-variational-optimality} is given in Subsection~\ref{subsec:variational-optimality}.

In the low-density, or dilute, regime where $R_N=\sqrt{\kappa_N N}$ with $\kappa_N \to  \infty$, we compute explicit limits of the mass and energy per state. Moreover, we show that the normalized density $\kappa_N\rho_{N,R_N}$ converges to $1/\pi$ in the averaged $L^p$ norm for any $1 \leq p <\infty$. 

\begin{theorem}[Low-density regime]
\label{thm:low-density}
Let $\{\kappa_N\}_{N\ge1}$ be a sequence of positive numbers such that
$\kappa_N\to\infty$ and set $R_N=\sqrt{\kappa_N N}$.
Let $\{u_j^{(R_N)}\}_{j=1}^N$ and $\rho_{N,R_N}$ be defined by
\eqref{u_j^(R)} and \eqref{rho_NR}, respectively. Then there is a positive constant $C$
independent of $N$ and $\kappa_N$ such that
\begin{equation}\label{eq:dilute-global-bound}
 0\le\kappa_N\rho_{N,R_N}(x)\le C\qquad \text{for all } x \in \R^2.
\end{equation}
Moreover,
\begin{equation}\label{eq:dilute-mass}
 \lim_{N\to\infty}\frac{1}N\int_{\R^2}\rho_{N,R_N}\,dx=1.
\end{equation}
In addition, for any $1\le p<\infty$,
\begin{equation}\label{eq:dilute-Lp}
 \lim_{N\to\infty}
 \left(\frac{1}{|B_{R_N}|}\int_{B_{R_N}}
 \left|\kappa_N\rho_{N,R_N}(x)-\frac{1}{\pi}\right|^p\,dx\right)^{\frac{1}{p}}=0.
\end{equation}
Finally,
\begin{equation}\label{eq:dilute-energy}
 \lim_{N \to \infty}\frac{1}N\int_{\R^2}\Theta_{4\pi}(\rho_{N,R_N}(x))\,dx
 =8\pi^2.
\end{equation}
These conclusions are independent of the choice of the selected subspace within the eigenspace associated with  $\mu_N$.
\end{theorem}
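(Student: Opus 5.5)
The plan is to write the density explicitly in rescaled form and reduce everything to the local Weyl law for the Dirichlet Laplacian on $B_1$. With $y=x/R_N$ and $R=R_N$,
\[
\rho_{N,R}(Ry)=\sum_{j=1}^N \frac{|\psi_j(y)|^2}{R^2+\mu_j}, \qquad
\kappa_N\rho_{N,R_N}(R_Ny)=\frac1N\sum_{j=1}^N\frac{|\psi_j(y)|^2}{1+\mu_j/(\kappa_N N)}.
\]
Weyl's law gives $\mu_N\sim 4\pi N$. Hence $\mu_j/(\kappa_N N)\le \mu_N/(\kappa_N N)\to0$ uniformly in $j\le N$, and the denominators lie in $[1,1+o(1)]$. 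So
\[
\kappa_N\rho_{N,R_N}(R_Ny)=(1+o(1))\frac1N\sum_{j\le N}|\psi_j(y)|^2
\]
uniformly in $y$.

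For the bound \eqref{eq:dilute-global-bound}, I would use the standard pointwise spectral-function bound $\sum_{\mu_j\le\lambda}|\psi_j(y)|^2\le C\lambda$. This follows from the heat-kernel domination $p_t^{B_1}(y,y)\le (4\pi t)^{-1}$ with $t=1/\lambda$, since $\sum_{\mu_j\le\lambda}|\psi_j|^2\le e\sum_j e^{-\mu_j/\lambda}|\psi_j|^2\le e\lambda/(4\pi)$. Taking $\lambda=\mu_N\le CN$ gives $\frac1N\sum_{j\le N}|\psi_j|^2\le C$. This is uniform in $N$, $\kappa_N$ and the choice of basis in the top eigenspace: adding the whole eigenspace only increases the sum, and $\mu_N\le CN$ holds in any case.

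For the mass \eqref{eq:dilute-mass}, orthonormality in $L^2(B_1)$ gives
\[
\frac1N\int\rho_{N,R_N}=\frac1N\sum_{j\le N}\frac1{1+\mu_j/R_N^2}\in\Bigl[\frac1{1+\mu_N/R_N^2},1\Bigr]\to1.
\]

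For \eqref{eq:dilute-Lp}, it suffices to show $\frac1N\sum_{j\le N}|\psi_j(y)|^2\to 1/\pi$ in $L^p(B_1)$. By the uniform $L^\infty$ bound just proved and dominated convergence, I reduce to convergence in measure, indeed to pointwise convergence for every interior $y$. This is the local Weyl law: $\sum_{\mu_j\le\lambda}|\psi_j(y)|^2=\frac{\lambda}{4\pi}+o(\lambda)$, locally uniformly in the interior. Combined with $\mu_N/(4\pi N)\to1$, and with the top eigenspace multiplicity being $o(N)$ (also a consequence of the two-term Weyl law), this gives the limit $1/\pi$ independently of the basis chosen. Should the pointwise local Weyl law not be available from earlier sections, I would instead prove $L^1$ convergence via a Tauberian argument on the heat trace localized by test functions, then pass to $L^p$ using the $L^\infty$ bound. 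I expect this step to be the main technical obstacle, namely the uniform local Weyl law in the interior together with control of the edge of the spectrum.

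For the energy \eqref{eq:dilute-energy}, note that $\rho_{N,R_N}\le C/\kappa_N\to0$ uniformly. On such values, $\Theta_{4\pi}(s)=\tfrac{(4\pi)^2}{2}s(1+O(s))=8\pi^2 s\,(1+O(\kappa_N^{-1}))$. Therefore
\[
\frac1N\int\Theta_{4\pi}(\rho_{N,R_N})=(8\pi^2+o(1))\,\frac1N\int\rho_{N,R_N}\to8\pi^2
\]
by \eqref{eq:dilute-mass}. This part needs only the global bound and the mass limit, so it too is basis independent.
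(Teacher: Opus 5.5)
Your proposal follows essentially the same route as the paper. Mass: you use the squeeze $(1+\mu_N/(\kappa_N N))^{-1}\le\lambda_j(R_N)^{-1}\le1$. Global bound: you use the Dirichlet heat-kernel bound with $t=1/\mu_N$. Interior: you use the local Weyl law for $S_{0,N}(y)=\sum_{j\le N}|\psi_j(y)|^2$. Energy: you use $\Theta_{4\pi}(s)=8\pi^2 s+O(s^2)$.

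For the partial top cluster, what is really used is the sandwich $N_y(\mu_N^{<})\le S_{0,N}(y)\le N_y(\mu_N)$. You need it together with $\mu_N-\mu_N^{<}=o(N)$, and this already follows from the one-term Weyl law (Lemma~\ref{lem:sublinear-cluster}), so the two-term law is unnecessary. The paper finishes the $L^p$ step with H\"ormander's uniform remainder on $\overline{B_{1-\delta}}$ and a split into the ball $B_{1-\delta}$ and the annulus $B_1\setminus B_{1-\delta}$. You use pointwise convergence and dominated convergence instead, which is an inessential difference.

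There is one numerical slip you must fix. On the unit disk, Weyl's law is $\mu_N\sim 4N$, not $\mu_N\sim 4\pi N$: the counting function is $|B_1|\lambda/(4\pi)=\lambda/4$. As written, your local Weyl law gives $S_{0,N}(y)=\mu_N/(4\pi)+o(N)$. Combined with your claim $\mu_N/(4\pi N)\to1$, this would give the interior limit $1$, contradicting your own (correct) target $1/\pi$. With $\mu_N/(4\pi N)\to 1/\pi$ the argument is consistent. The value $1/\pi$ also matches the exact average $\frac{1}{\pi}\int_{B_1}\frac{1}{N}S_{0,N}(y)\,dy=\frac{1}{\pi}$, which comes from $L^2(B_1)$-orthonormality. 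The rest of your argument, including the bound $\mu_N\le CN$ used for the global estimate, is unaffected by the constant.
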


 The mass limit~\eqref{eq:dilute-mass} has error $O(\kappa_N^{-1})$,
as shown in~\eqref{eq:dilute-mass-squeeze}. The  averaged convergence
in~\eqref{eq:dilute-Lp} holds for every finite $p\ge1$, but fails for
$p=\infty$ (see Remark~\ref{remark:dilute}). The proof of Theorem \ref{thm:low-density} is given in Section \ref{sec:low-density}.

 We next consider the high-density regime, where $\kappa_N \to 0$ and  $R_N=\sqrt{\kappa_N N} \to \infty$. Interestingly, in this case, both the mass and the endpoint energy per state converge to zero 
  \begin{align*}
   \lim_{N\to\infty}\frac1N\int_{\R^2}\rho_{N,R_N}(x)\,dx
   &=0,\\
   \lim_{N\to\infty}\frac1N\int_{\R^2}
   \Theta_{4\pi}(\rho_{N,R_N}(x))\,dx
   &=0.
  \end{align*}
  These limits are proved in Proposition~\ref{prop:converge-zero} and differ from those in the intermediate-density and low-density regimes. In the high-density regime, the suitable normalized density is $\rho_{N,R_N}/\ln(1+4/\kappa_N)$.

\begin{theorem}[High-density regime]
\label{thm:intro-high-density}
Let $\{\kappa_N\}_{N\ge1}$ be a sequence of positive numbers such that
$\kappa_N\to0$. Set $R_N=\sqrt{\kappa_N N}$ and assume that
$R_N\to\infty$. Let $\{u_j^{(R_N)}\}_{j=1}^N$ and $\rho_{N,R_N}$ be defined by
\eqref{u_j^(R)} and \eqref{rho_NR}, respectively. Put $\beta_N=\ln(1+4/\kappa_N)$. Then, for
every $1\le p<\infty$,
\begin{equation} \label{eq:Lp-highdensity}
 \lim_{N \to \infty}\left(\frac{1}{|B_{R_N}|}\int_{B_{R_N}}
 \left|\frac{\rho_{N,R_N}}{\beta_N}-\frac{1}{4\pi}\right|^p\,dx\right)^{\frac{1}{p}}
= 0.
\end{equation}
Moreover,
\begin{equation} \label{eq:sup-rho-limit}
\lim_{N \to \infty} \left( \frac{1}{\beta_N}\sup_{x\in\R^2}\rho_{N,R_N}(x) \right) = \frac{1}{4\pi},
\end{equation}
and
\begin{equation}  \label{eq:intro-Theta-convergence}
 \lim_{N\to\infty}\frac{1}{|B_{R_N}|}\int_{B_{R_N}}
 \Theta_{4\pi}\left(\frac{\rho_{N,R_N}(x)}{\beta_N}\right)\,dx
 =4\pi(e-2).
\end{equation}
\end{theorem}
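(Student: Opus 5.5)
The plan is a semiclassical analysis of the kernel of a spectral function of the Dirichlet Laplacian on $B_1$. By scaling,
\[
\rho_{N,R}(x)=R^{-2}\sum_{j\le N}\frac{|\psi_j(x/R)|^2}{1+\mu_j/R^2}.
\]
With $R=R_N$ and $h=1/R_N$, this is $h^2\sum_{\mu_j\le \mu_N} f(h^2\mu_j)|\psi_j(y)|^2$, where $f(t)=(1+t)^{-1}$ and $y=x/R_N$. Weyl's law gives $\mu_N\sim 4\pi N$, so $h^2\mu_N\to 4\pi N/(\kappa_N N)=4/\kappa_N\to\infty$. The density is therefore governed by the local Weyl law
\[
h^2\sum_{h^2\mu_j\le \lambda} f(h^2\mu_j)|\psi_j(y)|^2\approx \frac{1}{4\pi}\int_0^{\lambda} f(t)\,dt=\frac{1}{4\pi}\ln(1+\lambda),
\]
which with $\lambda=4/\kappa_N$ yields the main term $\beta_N/(4\pi)$. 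Since $\beta_N\to\infty$, the claim is that the relative error is $o(\beta_N)$ on most of $B_1$ and $O(\beta_N)$ uniformly.

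\textbf{Step 1: uniform upper bound.} I will write $f(t)=\int_0^\infty e^{-s}e^{-st}\,ds$ and compare with the truncated heat kernel. This gives
\[
\sum_{\mu_j\le\mu_N}\frac{|\psi_j(y)|^2}{1+h^2\mu_j}\le \sum_j \frac{|\psi_j(y)|^2}{1+h^2\mu_j}\,\1_{\mu_j\le\mu_N}.
\]
The cutoff is bounded by $e^{1}e^{-t\mu_j}$ with $t=1/\mu_N$, and the Dirichlet heat kernel is dominated by the free one: $\sum_j e^{-t\mu_j}|\psi_j(y)|^2\le (4\pi t)^{-1}$. Splitting dyadically in $\mu_j$ and applying this bound at scale $t\sim 1/\mu$ for each shell, followed by Abel summation, produces
\[
\sup_x \rho_{N,R_N}\le \frac{\beta_N}{4\pi}+O(1).
\]
This bound is sharp enough for the upper half of \eqref{eq:sup-rho-limit}. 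Concretely, I will use the exact identity $f(h^2\mu)=\int_0^\infty e^{-s}e^{-sh^2\mu}\,ds$ restricted to $s\ge h^2/\mu_N$-type ranges, together with the cutoff. The truncation at $\mu_N$ is the most delicate point; to handle it I will use the pointwise counting bound $\sum_{\mu_j\le\lambda}|\psi_j(y)|^2\le e\lambda/(4\pi)$, obtained from the heat kernel at $t=1/\lambda$, and then integrate by parts in $\lambda$ against $f$.

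\textbf{Step 2: local lower bound and $L^p$ convergence.} For $y$ with $\mathrm{dist}(y,\partial B_1)\ge\delta$, the interior local Weyl law for heat kernels gives $\sum_j e^{-t\mu_j}|\psi_j(y)|^2=(4\pi t)^{-1}(1+O(e^{-c\delta^2/t}))$. By the same Laplace-transform and Tauberian-type comparison (using the monotonicity of $f$ and of the counting function), this yields
\[
\rho_{N,R_N}(x)\ge \frac{\beta_N}{4\pi}-C_\delta \quad\text{for } |x|\le(1-\delta)R_N.
\]
Combined with the uniform bound of Step 1, $\rho_{N,R_N}/\beta_N\to 1/(4\pi)$ uniformly on $B_{(1-\delta)R_N}$, while the normalized density stays bounded on the boundary layer, whose relative measure is $O(\delta)$. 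Letting $\delta\to0$ gives \eqref{eq:Lp-highdensity} for all finite $p$. Evaluating at $x=0$ gives the lower half of \eqref{eq:sup-rho-limit}.

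\textbf{Step 3: energy limit.} $\Theta_{4\pi}$ is continuous on $[0,\infty)$ and $\rho/\beta_N$ is uniformly bounded, so the integrand is bounded. Hence \eqref{eq:intro-Theta-convergence} follows from the $L^p$ (or measure) convergence by dominated convergence on the normalized ball. The limit is
\[
\Theta_{4\pi}\!\left(\tfrac{1}{4\pi}\right)=4\pi\,(e-1-1)=4\pi(e-2),
\]
as claimed.

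I expect the main obstacle to be Step 1: obtaining the sharp constant $1/(4\pi)$ uniformly up to an $o(\beta_N)$ error, including near $\partial B_1$ and with the hard spectral cutoff at $\mu_N$. The hard cutoff is what prevents a direct heat-kernel identity; the counting-function integration by parts is my intended route around it.
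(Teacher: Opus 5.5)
Your overall plan can be made to work, but Step 1, the step you flag as the crux, does not reach the constant $1/(4\pi)$ by any of the mechanisms you actually specify.

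Write $\lambda:=\mu_N/R_N^2=\lambda_N(R_N)-1$ and $f(t)=(1+t)^{-1}$. The counting bound you get from $k_{-\Delta}^{B_1}(t,y,y)\le(4\pi t)^{-1}$ is $N_y(\Lambda)\le\inf_{t>0}e^{t\Lambda}/(4\pi t)=e\Lambda/(4\pi)$. Any argument that goes only through this bound keeps the factor $e$:
\begin{itemize}
\item Integrating it by parts against $f$ produces exactly $\frac{e}{4\pi}\bigl(\frac{\lambda}{1+\lambda}+\int_0^\lambda\frac{t\,dt}{(1+t)^2}\bigr)=\frac{e}{4\pi}\ln(1+\lambda)$.
\item Bounding the cutoff by $e^{1-\mu_j/\mu_N}$ gives $\frac{e}{4\pi}e^{1/\lambda}\Ei(1/\lambda)\sim\frac{e}{4\pi}\ln\lambda$.
\item Dyadic shells with Abel summation also lose a constant factor larger than one.
\end{itemize}
So as written you only get $\sup_{\R^2}\rho_{N,R_N}\le\frac{e}{4\pi}\beta_N(1+o(1))$. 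This misses the upper half of \eqref{eq:sup-rho-limit}, and also the interior upper bound that Step 2 borrows from Step 1.

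Your ``restricted Laplace range'' remark points to the right fix, but the scale is wrong. Cutting at $s\ge h^2/\mu_N$ leaves the heat-kernel term $\frac{1}{4\pi}\Ei(h^2/\mu_N)=\frac{1}{4\pi}\bigl(\beta_N+4\ln R_N+O(1)\bigr)$. This is not $\frac{\beta_N}{4\pi}(1+o(1))$ in general; for $\kappa_N=N^{-1/2}$ it is about $3\beta_N/(4\pi)$.

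The correct split is $f(h^2\mu)\le\int_{s_0}^\infty e^{-s(1+h^2\mu)}\,ds+s_0$ with $s_0=1/(h^2\mu_N)=1/\lambda$.
\begin{itemize}
\item The first piece, summed over all $j$, is controlled by the heat kernel directly and gives $\frac{1}{4\pi}\Ei(s_0)\le\frac{1}{4\pi}\ln(1+\lambda)$.
\item Only the second piece uses the counting bound, giving $s_0h^2\cdot\frac{e\mu_N}{4\pi}=\frac{e}{4\pi}$.
\end{itemize}
Hence $\sup\rho_{N,R_N}\le\frac{\beta_N+e}{4\pi}+o(1)$, which is the bound you claimed. This is exactly the splitting $1=(1-e^{-\lambda_j/\lambda_N})+e^{-\lambda_j/\lambda_N}$ in the proof of Proposition~\ref{prop:converge-zero}. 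Lemma~\ref{lem:uniform-upper} instead uses $e^{-2t\lambda_N}\rho_N\le\frac{1}{4\pi}\ln(1+\frac{1}{2t})$ with $2t\lambda_N=1/\ln\lambda_N\to0$. There the exponential prefactor tends to $1$ and the loss is only $O(\ln\ln\lambda_N)=o(\beta_N)$.

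With Step 1 repaired, your Step 2 is a valid but genuinely different route. The paper uses no local spectral asymptotics in this regime.
\begin{itemize}
\item The lower side comes from the global mass identity $\int_{\R^2}\rho_{N,R_N}\,dx=\sum_{j\le N}\lambda_j(R_N)^{-1}$ and the eigenvalue Weyl law, via Riemann sums (Lemma~\ref{lem:normalized-mass-limit}).
\item The identity $\langle|g-c|\rangle=2\langle(g-c)_+\rangle-(\langle g\rangle-c)$ then squeezes this mean against the sharp supremum to give $L^1$ convergence.
\item The lower half of \eqref{eq:sup-rho-limit} is simply mean $\le$ sup.
\end{itemize}
That squeeze makes your Step 2 unnecessary. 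What your route buys is a stronger conclusion: $\rho_{N,R_N}/\beta_N\to1/(4\pi)$ uniformly on $B_{(1-\delta)R_N}$.

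The price is a local Weyl law uniform in $y\in\overline{B_{1-\delta}}$, and three points need care.
\begin{itemize}
\item A Karamata-type Tauberian argument from the heat asymptotics yields only $\inf_{B_{(1-\delta)R_N}}\rho_{N,R_N}\ge(1-o(1))\beta_N/(4\pi)$. That suffices, but your additive form $\beta_N/(4\pi)-C_\delta$ needs the $O(\sqrt\Lambda)$ remainder of Proposition~\ref{prop:hormander-local-weyl}.
\item Only part of the top eigenspace may be selected. So integrate against $dN_y$ only up to $\mu_N^{<}$, and use Lemma~\ref{lem:sublinear-cluster} to get $\ln(1+\mu_N^{<}/R_N^2)=\beta_N+o(1)$.
\item Weyl's law on the unit disk is $\mu_N\sim4N$, not $4\pi N$. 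Your own limit $h^2\mu_N\approx4/\kappa_N$ requires the former.
\end{itemize}
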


 Equation~\eqref{eq:Lp-highdensity} gives convergence in every finite averaged
$L^p$  norm, while~\eqref{eq:sup-rho-limit} identifies the limiting normalized
supremum. The proof of
Theorem~\ref{thm:intro-high-density} is completed  in
Section~\ref{sec:high-density}.

\subsection{Difficulties, proof strategy and novelties}
We explain the main difficulties and the ideas of the proofs.
\smallskip

\noindent  $\bullet$ \textit{The endpoint inequality and sharpness.} In the proof of ~\eqref{eq:rank-one-defect}, we face several difficulties. The first comes from the interaction of the functions in orthonormal families, which significantly complicates the analysis. The second one stems from the endpoint coefficient $4\pi$, which requires delicate estimates. The third one arises in the whole-space setting, where tools such as Poincar\'e's inequality (an important ingredient in \cite{ANN}) are unavailable. The last difficulty originates from the mixed rank-mass form of the bound on the right-hand side of ~\eqref{eq:rank-one-defect}. 

To overcome these challenges, we combine two independent inequalities:  a rank-only Moser--Trudinger type inequality for the density $\rho$ of orthonormal families $\{u_n\}_{n=1}^N$ (see estimate \eqref{eq:ruf-endpoint}) and a scalar Moser--Trudinger type inequality with a mass-dependence bound (see estimate~\eqref{eq:scalar-exact-growth}). In order to obtain the first inequality, we develop a heat-semigroup refinement of Rumin's density method using the heat-regularized density $\rho_t(x)=\sum_{n=1}^N\big|(e^{-t\cA}u_n)(x)\big|^2$ where $\cA=-\Delta+1$ on $\R^2$.  A novel aspect of our approach is that it does not rely on the Fourier transform which is a key tool in the classical Rumin method. More precisely, by combining the layer-cake identity~\eqref{eq:semigroup-layer-cake}
with the reverse triangle inequality, we obtain~\eqref{eq:rumin-semigroup},
which bounds the integral of $(\sqrt{\rho}-\sqrt{\rho_t})^2$
over $\R^2\times(0,\infty)$ by a constant
multiple of $N$.
Thanks to Bessel's inequality~\eqref{eq:bessel-Q} and
the logarithmic kernel estimate~\eqref{eq:Qt-log-final}, we then
bound $\rho_t$ and derive the rank-only endpoint
estimate~\eqref{eq:ruf-endpoint} through Proposition~\ref{prop:abstract-endpoint}.
The coefficient of the logarithmic bound determines the sharp
exponential coefficient $4\pi$. Then we combine a rearrangement argument,
the quantitative exterior estimate~\eqref{eq:exterior-descent}, and
the classical Moser--Trudinger inequality on a ball, to establish the scalar Moser--Trudinger  estimate~\eqref{eq:scalar-exact-growth}. 

By using a concentration argument, we establish the divergence
in~\eqref{eq:intro-alpha-sharp}, thereby confirming the optimality of the coefficient $4\pi$. Moreover, employing a technique based on disjoint translations, we derive the lower bound needed for the sharp linear growth~\eqref{eq:KN-two-sided}. It is worth emphasizing that in the case $N=1$, our inequality ~\eqref{eq:rank-one-defect} is equivalent to the Ibrahim--Masmoudi--Nakanishi inequality \eqref{eq:IMN-exact-growth-intro} and also implies Ruf's inequality \eqref{eq:ruf} (see
Corollary~\ref{cor:ruf-from-exact-growth} and Remark \ref{rem:scalar-comparisons}).
To the best of our knowledge, estimate ~\eqref{eq:rank-one-defect} is the first whole-space endpoint Moser--Trudinger type inequality for orthonormal families.
\smallskip

\noindent $\bullet$ \textit{Intermediate-density regime.}
In this regime, we compute the explicit limits ~\eqref{lim:Dirichlet} and \eqref{est:coupling}. The mass-per-state limit~\eqref{lim:Dirichlet} is proved by combining Weyl's law with the mass identity~\eqref{eq:rhoN-dirichlet}.

The derivation of the energy-per-state convergence \eqref{est:coupling} requires more delicate analysis to treat the integrals on the interior ball and near the boundary separately. A difficulty we encounter is that truncating at the $N$th eigenfunction may involve only part of an eigenspace. By separating this partial
cluster (see Lemma~\ref{lem:sublinear-cluster}) and applying  H\"ormander's local spectral estimate, we establish the
uniform interior limit~\eqref{eq:bulk-limit} on $B_{(1-\delta)R_N}$ for every fixed $\delta \in (0,1)$, independently of the selected subspace.
On the other hand, we establish a global density bound (see Lemma~\ref{lem:uniform-rhoN}) to control the
remaining boundary layer. Combining these two estimates allows us to derive  the explicit energy limit~\eqref{est:coupling} for every admissible truncation. It is noteworthy that this result does not
 require $N$ to coincide with the end of a spectral cluster.

As a consequence of ~\eqref{lim:Dirichlet} and the convexity of $\Theta_{4\pi}$, we obtain Proposition~\ref{prop:no-rank-uniform-mass}, which 
clarifies the role of the rank term $N-1$ on the right-hand side of  
estimate~\eqref{eq:rank-one-defect}.
\smallskip

\noindent $\bullet$ \textit{Asymptotic variational optimality in the intermediate-density regime.} A notable feature of the family  $\{u_j^{(R_N)}\}_{j=1}^N$ is that it asymptotically attains the maximal mass per state localized in  $B_{R_N}$ over all $N$-element
$H^1$-orthonormal families, as asserted in~\eqref{eq:variational-optimality}. To show this, we rely on  Ky Fan's principle~\cite{Fan49} to reduce the supremum to
the sum of the largest eigenvalues of the restricted resolvent,
as expressed in~\eqref{KyFan}.
A difficulty in the proof is that an arbitrary admissible whole-space $H^1$-orthonormal family in the definition \eqref{eq:intro-DN-def} need not vanish
on the boundary of the ball. By using an argument based on the restriction of variational problems, we obtain the lower
bound~\eqref{eq:variational-lower} for $\mathfrak{m}_N(R)$, while from IMS localization on a slightly larger ball, we infer the upper bound~\eqref{eq:variational-ims-sum}.
We choose the enlarged radius so that its relative change and the
localization error both tend to zero. Therefore the annulus $B_{R'} \setminus B_R$ (see Lemma \ref{lem:variational-ims-upper}) must have
vanishing relative width to preserve the limiting parameter $\kappa$, while
its absolute width must grow to make the IMS error negligible.
By applying the mass
limit~\eqref{eq:density-limit} to both radii, we obtain the same
asymptotic value. This proves optimality
without restricting the $H^1$-orthonormal families to Dirichlet eigenfunctions.
\smallskip

\noindent $\bullet$ \textit{The low-density regime.} In the proof of the averaged $L^p$ convergence~\eqref{eq:dilute-Lp}, a difficulty is  decoupling the spatial density profile~\eqref{q_N} from eigenvalue-dependent normalization weights~\eqref{eq:lambdaj}. Our strategy isolates the unweighted projector's spectral error~\eqref{est:S/N-pi} from these weight discrepancies~\eqref{q_N:bulk}. By combining a local Weyl law bound for the unweighted projector~\eqref{eq:dilute-projector-bulk} with the error bound of size $O(\kappa_N^{-1})$, we obtain a uniform interior limit~\eqref{eq:dilute-bulk} without imposing any rate restriction on $\kappa_N$ relative to $N$. Then we  apply a global upper bound~\eqref{eq:dilute-global-bound} to ensure that the boundary contribution vanishes in finite averaged norms. Together, the interior limit and the vanishing boundary contribution lead to the convergence~\eqref{eq:dilute-Lp}. The remaining limits in Theorem~\ref{thm:low-density} uses distinct approaches. We prove the mass limit~\eqref{eq:dilute-mass} through a squeeze  argument~\eqref{eq:dilute-mass-squeeze} on the normalization weights, bypassing spatial analysis entirely. Finally, because the global bound forces the density uniformly  to zero, by using a Taylor expansion, we reduce the energy limit to its leading-order mass term, thereby deriving~\eqref{eq:dilute-energy}. \smallskip

\noindent $\bullet$ \textit{The high-density regime.}
In this regime where $\kappa_N\to0$ and $R_N=\sqrt{\kappa_N N}\to\infty$, 
both the mass and endpoint energy per state vanish
(see Proposition~\ref{prop:converge-zero}). To describe the spatial
density, we set $\beta_N=\ln(1+4/\kappa_N)$ and study the normalized density
$\rho_{N,R_N}/\beta_N$, which is suggested by
$\rho_\infty(\kappa_N)=\beta_N/(4\pi)$
(see~\eqref{eq:rho-infty-def}). With this suitable normalization, we establish two key results: the global upper bound~\eqref{eq:uniform-upper-claim} ensures that any positive error of $\rho_{N,R_N}/\beta_N$ from $1/(4\pi)$ is uniformly small, while the averaged mass limit~\eqref{eq:normalized-mass-highdensity} forces the error to vanish. Together, these two facts establish the averaged $L^1$ convergence~\eqref{eq:Lp-highdensity}. The uniform upper bound then extends this convergence to every finite $p\ge1$. Furthermore, because the spatial average provides a natural lower bound for the supremum, combining this averaged mass limit with the global upper bound \eqref{eq:uniform-upper-claim} yields~\eqref{eq:sup-rho-limit}. Finally, since $\Theta_{4\pi}$ is Lipschitz on the common bounded range of the normalized density, from the averaged $L^1$ convergence, we derive~\eqref{eq:intro-Theta-convergence}.

\medskip
 \noindent\textbf{Organization.}
In Section~\ref{sec:semigroup-bessel}, we develop the abstract semigroup framework. Section~\ref{sec:endpoint} establishes the endpoint and subcritical inequalities and the sharpness properties. In Section~\ref{sec:dirichlet-families}, we construct the Dirichlet families and record the spectral estimates. Section~\ref{sec:intermediate-density} proves the intermediate-density mass and energy limits and the obstruction to mass-only bounds, with asymptotic variational optimality established in Subsection~\ref{subsec:variational-optimality}. Sections~\ref{sec:low-density} and~\ref{sec:high-density} treat the low- and high-density regimes. Appendix~\ref{app:scalar-exact-growth} derives the scalar endpoint inequality from the classical Moser--Trudinger theorem on a ball and proves the scalar comparison results. Appendix~\ref{sec:auxiliary-estimates} collects the auxiliary semigroup estimates.

\medskip
 \noindent\textbf{Acknowledgement.} Tuan-Dat Tran  acknowledges the support of the Specific Research project SV No. MUNI/A/1648/2025.

\section{Abstract framework}
\label{sec:semigroup-bessel}

We combine a semigroup identity for nonnegative self-adjoint operators
with the reverse triangle inequality in $\ell^2$ to convert
orthonormality into an integral estimate for the density $\rho$ and its
heat-flow regularizations $\rho_t$. Then we use Bessel's inequality to control the
regularized density by a diagonal kernel, thereby obtaining
Proposition~\ref{prop:abstract-endpoint} under abstract hypotheses.
These abstract results will be applied to $\cA=-\Delta+1$ on
$L^2(\R^2)$ in Subsection~\ref{subsec:diagonal-bound} and to its Dirichlet realizations in Section~\ref{sec:intermediate-density}.

\begin{lemma}[Semigroup layer-cake identity]
\label{lem:layer-cake}
Let $\cA$ be a nonnegative self-adjoint operator on a Hilbert space $\scrH$, and set $\scrQ=\operatorname{Dom}(\cA^{1/2})$. For every $u\in\scrQ$,
\begin{equation}
\|\cA^{1/2}u\|_{\scrH}^2
=
\frac{1}{2\ln 2}\int_0^\infty\big\|(1-e^{-t\cA})u\big\|_{\scrH}^2\,\frac{dt}{t^2}.
\label{eq:semigroup-layer-cake}
\end{equation}
\end{lemma}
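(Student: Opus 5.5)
The plan is to reduce \eqref{eq:semigroup-layer-cake} to a scalar identity via the spectral theorem. Let $E$ be the spectral measure of $\cA$ and, for $u\in\scrH$, let $\mu_u(d\lambda)=d\langle E(\lambda)u,u\rangle$ be the associated finite positive Borel measure on $[0,\infty)$. Functional calculus gives, for each $t>0$,
\[
\big\|(1-e^{-t\cA})u\big\|_{\scrH}^2=\int_{[0,\infty)}(1-e^{-t\lambda})^2\,\mu_u(d\lambda),
\qquad
\|\cA^{1/2}u\|_{\scrH}^2=\int_{[0,\infty)}\lambda\,\mu_u(d\lambda),
\]
where the second formula holds for $u\in\scrQ$. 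Moreover, $u\in\scrQ$ is equivalent to $\int\lambda\,d\mu_u<\infty$.

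Next I will compute the scalar integral. For $\lambda>0$, the substitution $s=t\lambda$ gives
\[
\int_0^\infty (1-e^{-t\lambda})^2\,\frac{dt}{t^2}=\lambda\int_0^\infty\frac{(1-e^{-s})^2}{s^2}\,ds .
\]
For $\lambda=0$ the integrand vanishes identically, so the identity holds in that case as well. The remaining constant is found by integrating by parts:
\[
\int_0^\infty\frac{(1-e^{-s})^2}{s^2}\,ds=\int_0^\infty\frac{2e^{-s}(1-e^{-s})}{s}\,ds=2\int_0^\infty\frac{e^{-s}-e^{-2s}}{s}\,ds=2\ln 2 .
\]
The boundary terms vanish because $(1-e^{-s})^2/s\to0$ both as $s\to0$ and as $s\to\infty$. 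The last integral is a Frullani integral, equal to $\ln(2/1)$. It can also be checked by writing $\frac{e^{-s}-e^{-2s}}{s}=\int_1^2 e^{-as}\,da$ and applying Tonelli.

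Finally, I will integrate the first spectral formula in $t$ against $dt/t^2$. The integrand $(1-e^{-t\lambda})^2t^{-2}$ is nonnegative and jointly measurable, so Tonelli's theorem allows the order of integration to be swapped:
\[
\int_0^\infty\big\|(1-e^{-t\cA})u\big\|^2\frac{dt}{t^2}=\int_{[0,\infty)}2\ln 2\,\lambda\,\mu_u(d\lambda)=2\ln 2\,\|\cA^{1/2}u\|_{\scrH}^2 .
\]
This is exactly \eqref{eq:semigroup-layer-cake}.

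The only points needing care are the justification of Tonelli, which holds by nonnegativity, and the evaluation of the constant $2\ln2$. Neither is a serious obstacle. As a byproduct, the argument shows that the right-hand side is finite if and only if $u\in\scrQ$.
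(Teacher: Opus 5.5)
Your proposal is correct and follows essentially the same route as the paper: spectral measure, Tonelli to swap the $t$- and $\lambda$-integrals, and the scalar identity $\int_0^\infty (1-e^{-t\lambda})^2t^{-2}\,dt = 2(\ln 2)\lambda$, proved by substitution, integration by parts and Frullani's identity. The only differences are cosmetic. You integrate over $[0,\infty)$ and treat $\lambda=0$ separately, where the paper writes $(0,\infty)$. You also add the remark that the right-hand side is finite if and only if $u\in\operatorname{Dom}(\mathcal{A}^{1/2})$, which the paper does not need.
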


\begin{proof}
Let $E_\cA$ be the spectral measure of $\cA$. By the spectral theorem,
$\|(1-e^{-t\cA})u\|_{\scrH}^2=\int_{(0,\infty)}(1-e^{-t\lambda})^2\,d\langle u,E_\cA(\lambda)u\rangle$. Using Tonelli's theorem and Lemma~\ref{lem:scalar-identity}, we obtain
\begin{align*}
\int_0^\infty\big\|(1-e^{-t\cA})u\big\|_{\scrH}^2\,\frac{dt}{t^2}
& =
\int_{(0,\infty)}\left[\int_0^\infty\frac{(1-e^{-t\lambda})^2}{t^2}\,dt\right]
d\langle u,E_\cA(\lambda)u\rangle \\
& = 2(\ln 2)\int_{(0,\infty)}\lambda\,d\langle u,E_\cA(\lambda)u\rangle
=2(\ln 2)\|\cA^{1/2}u\|_{\scrH}^2,
\end{align*}
which implies \eqref{eq:semigroup-layer-cake}. The proof is complete.
\end{proof}

Let $(X,dx)$ be a $\sigma$-finite measure space and let $\cA$ be a  positive self-adjoint operator on $L^2(X)$  such that $\cA \geq a_* I$ for some $a_*>0$. Set $\scrQ:=\operatorname{Dom}(\cA^{1/2})$. A family
$\{u_n\}_{n=1}^N \subset \scrQ$ is called \emph{$\scrQ$-orthonormal} if
\[
\langle \cA^{1/2}u_n,\cA^{1/2}u_m\rangle_{L^2(X)}=\delta_{nm}, \quad   1 \leq n,m \leq N.
\]
We assume that, for every $u\in\scrQ$, the $L^2(X)$-valued heat mapping $t\mapsto e^{-t\cA}u$ admits a jointly measurable representative $(t,x)\mapsto(e^{-t\cA}u)(x)$ on $(0,\infty)\times X$.
Define
\begin{equation} \label{abstract:rho}
\rho(x):=\sum_{n=1}^N|u_n(x)|^2,
\quad
\rho_t(x):=\sum_{n=1}^N\big|(e^{-t\cA}u_n)(x)\big|^2,
\quad t>0.
\end{equation}

\begin{proposition}[Semigroup lower bound]
\label{prop:rumin}
For every $\scrQ$-orthonormal family $\{u_n\}_{n=1}^N$, we have
\begin{equation}
N
 \geq
\frac{1}{2\ln 2}\int_X\int_0^\infty
\big(\sqrt{\rho(x)}-\sqrt{\rho_t(x)}\big)^2\,\frac{dt}{t^2}\,dx.
\label{eq:rumin-semigroup}
\end{equation}
\end{proposition}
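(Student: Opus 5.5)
The plan is to apply Lemma~\ref{lem:layer-cake} to each $u_n$, sum over $n$, and then pass from the $L^2(X)$-norm of a vector of functions to a pointwise statement via the reverse triangle inequality in $\ell^2$ (that is, in $\mathbb{C}^N$).

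\emph{Step 1: the sum of the layer-cake identities.} By $\scrQ$-orthonormality, $\|\cA^{1/2}u_n\|^2=1$ for each $n$. Summing \eqref{eq:semigroup-layer-cake} over $n=1,\dots,N$ therefore gives
\[
N=\frac{1}{2\ln 2}\int_0^\infty\sum_{n=1}^N\big\|(1-e^{-t\cA})u_n\big\|_{L^2(X)}^2\,\frac{dt}{t^2}.
\]
Using the jointly measurable representatives and Tonelli's theorem, which applies because every integrand is nonnegative, I rewrite the right-hand side as
\[
\frac{1}{2\ln 2}\int_X\int_0^\infty\sum_{n}\big|u_n(x)-(e^{-t\cA}u_n)(x)\big|^2\,\frac{dt}{t^2}\,dx.
\]

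\emph{Step 2: the pointwise bound.} Fix $(t,x)$ and set $a=(u_n(x))_n$ and $b=((e^{-t\cA}u_n)(x))_n$, both in $\mathbb{C}^N$. The reverse triangle inequality gives
\[
\|a-b\|_{\ell^2}\ge\big|\|a\|_{\ell^2}-\|b\|_{\ell^2}\big|=\big|\sqrt{\rho(x)}-\sqrt{\rho_t(x)}\big|.
\]
Squaring, the summand in Step 1 is at least $(\sqrt{\rho}-\sqrt{\rho_t})^2$. Integrating this pointwise inequality against $dt\,dx/t^2$ yields \eqref{eq:rumin-semigroup}.

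\emph{Measurability.} The only technical point is that $(t,x)\mapsto\rho_t(x)$ and the integrand must be jointly measurable, so that Tonelli applies. This follows from the assumed jointly measurable representatives. One also needs $\int_X|f(t,x)|^2dx=\|f(t,\cdot)\|^2$ for a.e.\ $t$, which holds since the representative agrees with $e^{-t\cA}u_n$ in $L^2(X)$ for each $t$. The identity in Lemma~\ref{lem:layer-cake} needs $u_n\in\scrQ$, which holds by hypothesis. No further obstacle is expected; in particular, the lower bound $\cA\ge a_*$ is not needed here.
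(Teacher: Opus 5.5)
Your proposal is correct and follows essentially the same route as the paper's proof. You sum the layer-cake identity \eqref{eq:semigroup-layer-cake} over $n$ using $\|\cA^{1/2}u_n\|=1$, interchange the integrals by Tonelli via the jointly measurable representatives, and bound the integrand pointwise from below by the reverse triangle inequality in $\ell^2$; your remark that $\cA\ge a_*I$ plays no role here is also accurate.
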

\begin{proof}
Let $\{u_n\}_{n=1}^N$ be a $\scrQ$-orthonormal family. For  almost every $(x,t)\in X\times(0,\infty)$, the reverse triangle inequality  in $\ell^2$ gives
\begin{equation} \label{eq:reverse-triangle-pointwise}
\sum_{n=1}^N\big|u_n(x)-e^{-t\cA}u_n(x)\big|^2
\ \ge\
\big(\sqrt{\rho(x)}-\sqrt{\rho_t(x)}\big)^2.
\end{equation}
Summing~\eqref{eq:semigroup-layer-cake} (applied to each $u_n$) over
$n=1,\dots,N$, then using $\|\cA^{1/2}u_n\|_{L^2(X)}=1$ and \eqref{eq:reverse-triangle-pointwise}, we have
\begin{align*}
N & =\sum_{n=1}^N\|\cA^{1/2}u_n\|_{L^2(X)}^2
= \frac{1}{2\ln 2}\int_0^\infty\sum_{n=1}^N
\big\|(1-e^{-t\cA})u_n\big\|_{L^2(X)}^2\,\frac{dt}{t^2} \\
&= \frac{1}{2\ln 2}\int_0^\infty
\int_X\sum_{n=1}^N|u_n(x)-e^{-t\cA}u_n(x)|^2\,dx \,\frac{dt}{t^2} \geq \frac{1}{2\ln 2}\int_X\int_0^\infty
\big(\sqrt{\rho(x)}-\sqrt{\rho_t(x)}\big)^2\,\frac{dt}{t^2}\,dx,
\end{align*}
which is~\eqref{eq:rumin-semigroup}. The proof is complete.
\end{proof}

The next result provides a pointwise upper bound for $\rho_t$.

\begin{lemma}
\label{lem:bessel}
Let $\cA \ge a_* I$ (for some $a_*>0$) be self-adjoint on $L^2(X)$ and suppose that there is a jointly measurable complex-valued function
$G$ on $(0,\infty)\times X\times X$ such that, for every $t>0$, $G_t(x,y)=G(t,x,y)$ is a kernel of $\cA^{-1/2}e^{-t\cA}$ and
$G_t(x,\cdot)\in L^2(X)$ for a.e. $x \in X$. Then, for every
$\scrQ$-orthonormal family $\{u_n\}_{n=1}^N$, every $t>0$ and a.e. $x \in X$,
\begin{equation} \label{eq:bessel-Q}
\rho_t(x)\ \le\ Q_t(x),
\quad \text{where }
Q_t(x)=\|G_t(x,\cdot)\|_{L^2(X)}^2.
\end{equation}
\end{lemma}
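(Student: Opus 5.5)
The plan is to write each regularized value $(e^{-t\cA}u_n)(x)$ as an $L^2$ inner product against the kernel $G_t(x,\cdot)$, with the functions $v_n := \cA^{1/2}u_n$ playing the role of an $L^2$-orthonormal system, and then to apply Bessel's inequality.

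First, set $v_n:=\cA^{1/2}u_n\in L^2(X)$ for $n=1,\dots,N$. By $\scrQ$-orthonormality, $\langle v_n,v_m\rangle_{L^2(X)}=\delta_{nm}$, so $\{v_n\}$ is orthonormal in $L^2(X)$.

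Next, since $\cA\ge a_*I$ with $a_*>0$, the operator $\cA^{-1/2}$ is bounded and $\cA^{-1/2}\cA^{1/2}u=u$ for $u\in\scrQ$. Moreover, $e^{-t\cA}$ commutes with $\cA^{-1/2}$ by the functional calculus. Hence
\[
e^{-t\cA}u_n=\cA^{-1/2}e^{-t\cA}v_n .
\]
Because $G_t$ is a kernel of $\cA^{-1/2}e^{-t\cA}$, for a.e. $x$ we get
\[
(e^{-t\cA}u_n)(x)=\int_X G_t(x,y)v_n(y)\,dy=\big\langle v_n,\overline{G_t(x,\cdot)}\big\rangle_{L^2(X)} .
\]
This integral converges absolutely by Cauchy--Schwarz, since $G_t(x,\cdot)\in L^2(X)$ for a.e. $x$.

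Then sum over $n$ and apply Bessel's inequality to the orthonormal system $\{v_n\}$ and the vector $\overline{G_t(x,\cdot)}$:
\[
\rho_t(x)=\sum_{n=1}^N\big|\langle \overline{G_t(x,\cdot)},v_n\rangle\big|^2\le \|G_t(x,\cdot)\|_{L^2(X)}^2=Q_t(x).
\]

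The main point needing care is the set of full measure. The kernel identity holds a.e. in $x$ for each fixed $n$ and $t$. Taking the finite union over $n=1,\dots,N$ of the exceptional null sets, together with the null set where $G_t(x,\cdot)\notin L^2(X)$, still leaves a null set, so the inequality holds for every $t$ and a.e. $x$. This a.e. representative should also be matched with the jointly measurable representative of $e^{-t\cA}u_n$ fixed earlier; they agree a.e. in $x$ for each $t$, so the bound holds for that representative as well.
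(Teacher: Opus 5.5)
Your proposal is correct and follows the paper's proof essentially step for step: you pass to the $L^2$-orthonormal system $v_n=\cA^{1/2}u_n$, use the commutation $e^{-t\cA}u_n=\cA^{-1/2}e^{-t\cA}v_n$, write $(e^{-t\cA}u_n)(x)=\langle v_n,\overline{G_t(x,\cdot)}\rangle$ via the kernel, and conclude with Bessel's inequality. Your additional bookkeeping of the exceptional null sets (the finite union over $n$, and matching with the fixed jointly measurable representative) is a harmless refinement that the paper leaves implicit.
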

\begin{proof}
Set $v_n:=\cA^{1/2}u_n$. By the $\scrQ$-orthonormality of $\{u_n\}_{n=1}^N$,
$\langle v_n,v_m\rangle_{L^2(X)}=\delta_{nm}$ for all $1 \leq m,n \leq N$, hence $\{v_n\}_{n=1}^N$ is $L^2(X)$-orthonormal. Since $\cA^{1/2}$ commutes with $e^{-t\cA}$, it follows that
$e^{-t\cA}u_n=e^{-t\cA}\cA^{-1/2}v_n=\cA^{-1/2}e^{-t\cA}v_n$.

Now fix $t>0$. For a.e. $x \in X$, by the assumption that $\cA^{-1/2}e^{-t\cA}$ has kernel
$G_t(x,\cdot)\in L^2(X)$, we have
\[
(\cA^{-1/2}e^{-t\cA}v_n)(x)
=\int_X G_t(x,y)v_n(y)\,dy
=\langle v_n,\overline{G_t(x,\cdot)}\rangle_{L^2(X)}.
\]
Thanks to Bessel's inequality for the
orthonormal system $\{v_n\}_{n=1}^N$, we derive
 \[
\rho_t(x)=\sum_{n=1}^N\big|(e^{-t\cA}u_n)(x)\big|^2
=
\sum_{n=1}^N\big|\langle v_n,\overline{G_t(x,\cdot)}\rangle\big|^2
\leq
\big\|G_t(x,\cdot)\big\|_{L^2(X)}^2 = Q_t(x).
\]
The proof is complete.
\end{proof}

\begin{proposition}
\label{prop:abstract-endpoint}
Under the assumptions of Lemma~\ref{lem:bessel}, suppose additionally that for some $a,\tau>0$,
\begin{equation} 
Q_t(x)\ \le\ a\ln\Big(1+\frac{\tau}{t}\Big)
\quad \text{for a.e. }(t,x)\in(0,\infty)\times X.
\label{eq:log-Q-hyp}
\end{equation}
Let $\{u_n\}_{n=1}^N$ be a $\scrQ$-orthonormal family
with density $\rho=\sum_n|u_n|^2$.  Then
\begin{equation}
 \int_X\Theta_{1/a}(\rho(x))\,dx
 \le
 \left(\frac{e}{2a^2a_*}+\frac{2C_0\tau}{a^2}\right)N,
\label{eq:abstract-endpoint}
\end{equation}
where $C_0>0$ is a universal constant, independent of $a,\tau,N,X$ and of
the operator $\cA$.
\end{proposition}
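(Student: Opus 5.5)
The plan is to run Rumin's argument pointwise in $x$, combining Proposition~\ref{prop:rumin} with the bound $\rho_t\le Q_t\le a\ln(1+\tau/t)$ from Lemma~\ref{lem:bessel} and \eqref{eq:log-Q-hyp}. I would split $X$ into the low-density set $\{\rho\le a\}$ and the high-density set $\{\rho>a\}$. The first term in \eqref{eq:abstract-endpoint} comes from the low set, the second from the high set.

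\emph{Low-density part.} For $0<s\le a$, write $\sigma=s/a\le 1$. From $e^{\sigma}-1-\sigma\le \tfrac{\sigma^2}{2}e^{\sigma}$ we get
\[
\Theta_{1/a}(s)\le \frac{s\,e^{s/a}}{2a^2}\le \frac{e}{2a^2}\,s .
\]
Since $\cA\ge a_*I$, each $u_n$ satisfies $\|u_n\|_{L^2}^2\le a_*^{-1}\|\cA^{1/2}u_n\|^2=a_*^{-1}$, so $\int_X\rho\,dx\le N/a_*$. Hence
\[
\int_{\{\rho\le a\}}\Theta_{1/a}(\rho)\,dx\le \frac{e}{2a^2a_*}\,N .
\]

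\emph{High-density part.} For a.e. $x$ with $s=\rho(x)>a$, I restrict the $t$-integral in \eqref{eq:rumin-semigroup} to $t>t_*(s):=\tau/(e^{s/a}-1)$. On this range $\sqrt{\rho_t(x)}\le\sqrt{a\ln(1+\tau/t)}<\sqrt s$, so the integrand is bounded below by $(\sqrt s-\sqrt{a\ln(1+\tau/t)})^2$. Substituting $t=\tau/v$ (so $dt/t^2=dv/\tau$) gives
\[
\int_0^\infty(\sqrt{\rho}-\sqrt{\rho_t})^2\frac{dt}{t^2}\ \ge\ F(s):=\frac{a}{\tau}\int_0^{e^{\sigma}-1}\bigl(\sqrt{\sigma}-\sqrt{\ln(1+v)}\bigr)^2\,dv,\qquad \sigma=s/a .
\]
The key one-dimensional claim is that there is a universal $c>0$ with
\[
\int_0^{e^\sigma-1}\bigl(\sqrt{\sigma}-\sqrt{\ln(1+v)}\bigr)^2dv\ \ge\ c\,\frac{e^\sigma}{\sigma}\qquad(\sigma\ge1).
\]
To prove it, I restrict to $v\in[0,e^{\sigma-1}-1]$, where $\ln(1+v)\le\sigma-1$. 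There
\[
\sqrt\sigma-\sqrt{\ln(1+v)}\ge\sqrt\sigma-\sqrt{\sigma-1}\ge \frac{1}{2\sqrt\sigma},
\]
so the integral is at least $(e^{\sigma-1}-1)/(4\sigma)$. For $\sigma$ near $1$ this degenerates, and there I would instead use the crude lower bound from $v\in[0,1]$, where the integrand is at least $(1-\sqrt{\ln 2})^2$. Since $\Theta_{1/a}(s)=(e^\sigma-1-\sigma)/(a\sigma)\le e^\sigma/(a\sigma)$, the claim yields
\[
F(s)\ge \frac{c\,a^2}{\tau}\,\Theta_{1/a}(s).
\]
Integrating over $\{\rho>a\}$ and using \eqref{eq:rumin-semigroup} then gives
\[
\int_{\{\rho>a\}}\Theta_{1/a}(\rho)\,dx\le\frac{\tau}{c a^2}\cdot 2\ln 2\,N .
\]
This is the second term of \eqref{eq:abstract-endpoint} with $C_0=\ln 2/c$, which is independent of $a,\tau,N,X,\cA$ because the reduction to $\sigma$ removed every parameter.

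\emph{Expected obstacle.} The main point is the one-dimensional lower bound: the order $e^\sigma/\sigma$ has to come out exactly, since it is what produces the sharp coefficient $1/a$ in the exponent. The argument above is the one I would try first: lose a factor $1/\sigma$ from $(\sqrt\sigma-\sqrt{\sigma-1})^2$ but keep a length $\sim e^{\sigma-1}$, and patch the regime $\sigma\in[1,2]$ by continuity/compactness. The remaining care is measure-theoretic. The hypothesis \eqref{eq:log-Q-hyp} and Lemma~\ref{lem:bessel} hold only a.e. in $(t,x)$, so Tonelli's theorem is needed to conclude that for a.e. $x$ the bound $\rho_t(x)\le a\ln(1+\tau/t)$ holds for a.e. $t$, which suffices for the restricted $t$-integral.
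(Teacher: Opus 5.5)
Your proof is correct and follows essentially the paper's route: the pointwise Rumin bound from Proposition~\ref{prop:rumin} combined with $\rho_t\le a\ln(1+\tau/t)$, a substitution reducing the $t$-integral to $\frac{a}{\tau}\int_0^\sigma e^s(\sqrt\sigma-\sqrt s)^2\,ds$ (your $v=e^s-1$), a one-dimensional lower bound of order $e^\sigma/\sigma$, and the mass bound $\int_X\rho\,dx\le N/a_*$ to handle the range $\sigma\le 1$. The differences are cosmetic. You split $X$ into $\{\rho\le a\}$ and $\{\rho>a\}$ rather than using the paper's pointwise comparison $\Theta_{1/a}(r)\le \frac{e}{2a^2}r+\frac{2}{a}\mathcal B(r/a)$, so you only need the one-dimensional bound for $\sigma\ge1$. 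You also bound the one-dimensional integral on $s\in[0,\sigma-1]$ and patch $\sigma\in[1,2]$ separately, whereas Lemma~\ref{lem:J-lower} works on $s\in[\sigma-1,\sigma]$ and needs no patch.
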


\begin{proof}
 Since $\rho_t(x)$ and $Q_t(x)$ are jointly measurable, by the Bessel inequality, \eqref{eq:log-Q-hyp} and Fubini's theorem, we derive $\rho_t(x)\le Q_t(x)\le a\ln(1+\tau/t)$ for a.e. $(t,x)$. Hence, for a.e. $x\in X$, this estimate holds for  almost every $t>0$.
Fix such an $x$ and put  $\varrho:=\rho(x)/a$. On the set of
$t>0$ where  $\ln(1+\tau/t)<\varrho$ (equivalently,
$a\ln(1+\tau/t)<\rho(x)$), we have $\rho_t(x)<\rho(x)$. Moreover, since
$\rho_t(x)\le a\ln(1+\tau/t)$,
 \[
\sqrt{\rho(x)}-\sqrt{\rho_t(x)}
\ \ge\
\sqrt{\rho(x)}-\sqrt{a\ln(1+\tau/t)}
=
\sqrt a\Big(\sqrt \varrho-\sqrt{\ln(1+\tau/t)}\Big).
\]
Hence, by the change of variable $t=\tau/(e^s-1)$, we derive
 \begin{align*}
\int_0^\infty (\sqrt{\rho(x)}-\sqrt{\rho_t(x)})^2 \frac{dt}{t^2} &\geq \int_{\{ t>0: \, \ln(1+\tau/t) < \varrho \}} (\sqrt{\rho(x)}-\sqrt{\rho_t(x)})^2 \frac{dt}{t^2}  \\
&\geq \int_0^\varrho\Big[\sqrt a\big(\sqrt \varrho-\sqrt s\big)\Big]^2\frac{e^s}{\tau}\,ds
\geq
c \frac{a}{\tau}\Big(\frac{e^\varrho}{1+\varrho}-1\Big),
\end{align*}
where we have used Lemma~\ref{lem:J-lower} for the last inequality.
Integrating over $x\in X$ and applying~\eqref{eq:rumin-semigroup}, while
keeping in mind that  $\varrho=\rho(x)/a$, we obtain, with  $C_0:=2\ln2/c$,
\begin{equation} \label{est:exp-1}
 \int_X\left[\frac{\exp(\rho(x)/a)}{1+\rho(x)/a}-1\right]\,dx
 \le C_0\frac{\tau}{a}N.
\end{equation}

For $r>0$, put  $\sigma=r/a$ and
$$
 \mathcal B(\sigma):=\frac{e^\sigma}{1+\sigma}-1=\frac{e^\sigma-1-\sigma}{1+\sigma}.
$$
 By the definition of $\Theta_{1/a}$,
\begin{equation}
\Theta_{1/a}(r)=\frac{1}a\frac{1+\sigma}{\sigma}\mathcal B(\sigma).
\label{eq:Theta-semigroup-identity}
\end{equation}
At $r=0$, both $\Theta_{1/a}(r)$ and  $\mathcal B(r/a)$ vanish, and the comparison below follows by continuity.
If  $0\le \sigma\le1$, by Taylor's formula with integral remainder, we have
 $e^\sigma-1-\sigma\le e\sigma^2/2$, and hence
$\Theta_{1/a}(r)\le er/(2a^2)$.  If  $\sigma\ge1$, then
$(1+\sigma)/\sigma\le2$, so~\eqref{eq:Theta-semigroup-identity} gives
 $\Theta_{1/a}(r)\le2\mathcal B(\sigma)/a$. Consequently, for every $r\ge0$,
 \begin{equation}  \label{eq:Theta-semigroup-comparison}
 \Theta_{1/a}(r)
 \le\frac{e}{2a^2}r+\frac{2}a\mathcal B\Big(\frac{r}{a}\Big).
\end{equation}
Finally, thanks to the assumption $\cA\ge a_*I$ and $\scrQ$-orthonormality of $\{u_n\}_{n=1}^N$, we obtain
\begin{equation} \label{eq:mass-bound-abstract}
 \int_X\rho\,dx=\sum_{n=1}^N\|u_n\|_{L^2(X)}^2
 \leq \frac{1}{a_*}\sum_{n=1}^N\|\cA^{1/2}u_n\|_{L^2(X)}^2=\frac{N}{a_*}.
\end{equation}
 Integrate~\eqref{eq:Theta-semigroup-comparison} and  apply~\eqref{est:exp-1} and~\eqref{eq:mass-bound-abstract} to obtain~\eqref{eq:abstract-endpoint}.
\end{proof}

\section{Whole-space endpoint  inequality}
\label{sec:endpoint}

 We first apply Proposition~\ref{prop:abstract-endpoint}  to obtain a rank-only Moser--Trudinger type inequality. Combining it with  the scalar endpoint estimate,  whose proof is given in Appendix~\ref{app:scalar-exact-growth}, leads to the rank--mass bound \eqref{eq:rank-one-defect}. Then we prove the two sharpness assertions \eqref{eq:intro-alpha-sharp} and \eqref{eq:KN-two-sided}. Finally, we establish a subcritical rank--mass inequality.

 \subsection{Rank-only endpoint  bound}  \label{subsec:diagonal-bound}
Now consider $\cA=-\Delta+1$ on $L^2(\R^2)$ and let $Q_t$ be defined in \eqref{eq:bessel-Q}. Define the exponential integral
 \[
\Ei(\sigma):=\int_\sigma^\infty e^{-s}\frac{ds}{s}, \quad \sigma>0.
\]

\begin{lemma}
\label{lem:Qt-explicit}
Let $\cA=-\Delta+1$ on $L^2(\R^2)$. For every $t>0$ and $x\in\R^2$,
\begin{equation} \label{eq:Qt-explicit}
Q_t(x)\ =\ \frac{1}{4\pi}\,\Ei(2t).
\end{equation}
\end{lemma}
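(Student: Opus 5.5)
The plan is to compute $Q_t(x)=\|G_t(x,\cdot)\|_{L^2}^2$ directly from the semigroup structure, without an explicit kernel. Since $\cA^{-1/2}e^{-t\cA}$ is self-adjoint and commutes with translations, it is a convolution operator $f\mapsto g_t*f$ with a real, even kernel $g_t$. Thus $G_t(x,y)=g_t(x-y)$ and
\[
Q_t(x)=\int_{\R^2}|g_t(x-y)|^2\,dy=\|g_t\|_{L^2}^2,
\]
which is independent of $x$. A convenient identification is
\[
\|g_t\|_{L^2}^2=\big(g_t*g_t\big)(0),
\]
the diagonal of the kernel of $(\cA^{-1/2}e^{-t\cA})^2=\cA^{-1}e^{-2t\cA}$. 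I will verify that $g_t\in L^2$, so the hypothesis of Lemma~\ref{lem:bessel} holds. By Plancherel,
\[
\|g_t\|_{L^2}^2=(2\pi)^{-2}\int_{\R^2}(|\xi|^2+1)^{-1}e^{-2t(|\xi|^2+1)}\,d\xi .
\]
This is finite for $t>0$ because of the Gaussian factor, while the integrand near $\xi=0$ is bounded.

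The paper avoids the Fourier transform elsewhere, so I will present the computation through the semigroup instead. Write
\[
\cA^{-1}e^{-2t\cA}=\int_{2t}^\infty e^{-s\cA}\,ds .
\]
This holds by the spectral theorem, since $\int_{2t}^\infty e^{-s\lambda}\,ds=\lambda^{-1}e^{-2t\lambda}$ for $\lambda\ge1$. The heat kernel of $\cA$ on $\R^2$ is $(4\pi s)^{-1}e^{-s}e^{-|x-y|^2/(4s)}$, whose diagonal value is $(4\pi s)^{-1}e^{-s}$. Integrating over $s\in(2t,\infty)$ then gives
\[
\frac{1}{4\pi}\int_{2t}^\infty e^{-s}\,\frac{ds}{s}=\frac{1}{4\pi}\Ei(2t).
\]
As a cross-check, polar coordinates in the Fourier integral with $r=|\xi|^2+1$ give $(2\pi)^{-2}\pi\int_1^\infty r^{-1}e^{-2tr}\,dr$, which is the same value.

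The only delicate point is the bookkeeping identifying $\|G_t(x,\cdot)\|^2$ with the diagonal of the kernel of $\cA^{-1}e^{-2t\cA}$. I will use the semigroup property $g_t*g_t=h$, where $h=\int_{2t}^\infty k_s\,ds$ and $k_s$ is the heat kernel. Because $g_t$ is even, $(g_t*g_t)(0)=\int g_t(y)^2\,dy$, and $h$ is continuous at $0$ with value $\frac{1}{4\pi}\Ei(2t)$. The continuity of $h$ at $0$ follows from dominated convergence, since each $k_s$ is bounded by $(4\pi s)^{-1}e^{-s}$, which is integrable on $(2t,\infty)$. The identity then holds for every $x$, not just almost every $x$, since the right-hand side is constant and $G_t$ may be chosen as the continuous representative $g_t(x-y)$.
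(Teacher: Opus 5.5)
Your proof is correct and follows the paper's route: you identify $Q_t(x)$ with the diagonal value of the kernel of $\cA^{-1}e^{-2t\cA}=\int_{2t}^\infty e^{-s\cA}\,ds$, then integrate the heat-kernel diagonal $e^{-s}/(4\pi s)$ over $s\in(2t,\infty)$ to obtain $\frac{1}{4\pi}\Ei(2t)$. Your convolution and evenness argument, together with the continuity check, makes rigorous the diagonal identification that the paper states without comment; for that step also note that $g_t*g_t$ is continuous, which holds because it is a convolution of two $L^2$ functions. The Plancherel computation is a valid independent check of the same value.
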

\begin{proof}
The heat kernel of $\cA=-\Delta+1$ is
$k_{\cA}(s,x,y)=e^{-s}(4\pi s)^{-1}e^{-|x-y|^2/(4s)}$.
In particular, $k_{\cA}(s,x,x)=\frac{e^{-s}}{4\pi s}$ for $x \in \R^2$. By using the identity
$e^{-2t\mu}/\mu=\int_{2t}^\infty e^{-s\mu}\,ds$ for $\mu>0$, we obtain
\[
Q_t(x)=(\cA^{-1}e^{-2t\cA})(x,x)=\int_{2t}^\infty k_{\cA}(s,x,x)\,ds
=
\int_{2t}^\infty\frac{e^{-s}}{4\pi s}\,ds
=
\frac{1}{4\pi}\Ei(2t),
\]
which implies~\eqref{eq:Qt-explicit}.
\end{proof}

\begin{lemma}
\label{lem:E1-bound}
For every $\sigma>0$, one has
$
\Ei(\sigma)\ \le \ln (1+\frac{1}{\sigma}).
$
\end{lemma}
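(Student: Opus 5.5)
We prove the inequality $\Ei(\sigma)\le \ln(1+1/\sigma)$ by comparing derivatives and using that both sides vanish at infinity. Set
\[
F(\sigma):=\ln\Big(1+\frac1\sigma\Big)-\Ei(\sigma),\qquad \sigma>0.
\]
Both $\ln(1+1/\sigma)$ and $\Ei(\sigma)$ tend to $0$ as $\sigma\to\infty$: the first since $1/\sigma\to0$, the second because $\Ei(\sigma)\le e^{-\sigma}/\sigma$. Hence $F(\sigma)\to0$ as $\sigma\to\infty$.

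It therefore suffices to show that $F$ is nonincreasing on $(0,\infty)$. Then $F(\sigma)\ge\lim_{s\to\infty}F(s)=0$ for every $\sigma>0$, which is the claim.

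Differentiating, we get
\[
F'(\sigma)=-\frac{1}{\sigma(\sigma+1)}+\frac{e^{-\sigma}}{\sigma}
=\frac{1}{\sigma}\Big(e^{-\sigma}-\frac{1}{1+\sigma}\Big).
\]
The elementary inequality $e^{\sigma}\ge1+\sigma$ gives $e^{-\sigma}\le 1/(1+\sigma)$. Hence $F'(\sigma)\le0$ for all $\sigma>0$, so $F$ is nonincreasing, and this completes the argument.

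None of the steps is a real obstacle. The only points needing care are the sign of the derivative of $\Ei$, namely $\Ei'(\sigma)=-e^{-\sigma}/\sigma$, and the justification of the limit of $\Ei$ at infinity via $\int_\sigma^\infty e^{-s}s^{-1}\,ds\le \sigma^{-1}e^{-\sigma}$.

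As an alternative, one could integrate directly:
\[
\ln(1+1/\sigma)=\int_\sigma^\infty\frac{ds}{s(1+s)},
\]
and compare integrands pointwise using $e^{-s}\le 1/(1+s)$. This gives the same inequality in one line.
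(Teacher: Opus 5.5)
Your proof is correct and is essentially the paper's: the monotonicity argument for $F$ is just the differentiated form of the same pointwise comparison $e^{-s}\le (1+s)^{-1}$. Your closing alternative, writing $\ln(1+1/\sigma)=\int_\sigma^\infty \frac{ds}{s(1+s)}$ and comparing integrands, is exactly the paper's one-line proof.
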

\begin{proof}
By the elementary inequality $e^{-s}\le(1+s)^{-1}$ for $s\ge0$, we have
$e^{-s}/s\le1/(s(1+s))$ for $s>0$. This implies
\[
 \Ei(\sigma)\le \int_\sigma^\infty\frac{ds}{s(s+1)}
 =\ln\Big(1+\frac{1}{\sigma}\Big).
\]
The proof is complete.
\end{proof}

\begin{corollary} \label{cor:Qt-log}
For every $t>0$, one has
\begin{equation} \label{eq:Qt-log-final}
Q_t(x)\ \le\ \frac{1}{4\pi}\ln\Big(1+\frac{1}{2t}\Big), \quad \forall x \in \R^2.
\end{equation}
\end{corollary}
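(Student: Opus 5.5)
The corollary follows by combining Lemma~\ref{lem:Qt-explicit} with Lemma~\ref{lem:E1-bound}, evaluated at $\sigma=2t$.

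Fix $t>0$ and $x\in\R^2$. By Lemma~\ref{lem:Qt-explicit}, the diagonal quantity $Q_t(x)=\|G_t(x,\cdot)\|_{L^2(\R^2)}^2$ for $\cA=-\Delta+1$ is independent of $x$ and equals $\frac{1}{4\pi}\Ei(2t)$. Since $2t>0$, Lemma~\ref{lem:E1-bound} with $\sigma=2t$ gives
\[
\Ei(2t)\le \ln\Big(1+\frac{1}{2t}\Big).
\]
Multiplying by $\frac{1}{4\pi}$ yields \eqref{eq:Qt-log-final}, uniformly in $x\in\R^2$.

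There is no real obstacle. The only point to note is that the identity in Lemma~\ref{lem:Qt-explicit} holds for every $x$, not merely almost every $x$, because the heat kernel is explicit and translation invariant. The bound therefore holds pointwise for all $(t,x)$. In particular, it verifies hypothesis \eqref{eq:log-Q-hyp} of Proposition~\ref{prop:abstract-endpoint} with $a=\frac{1}{4\pi}$ and $\tau=\frac12$, so that $\Theta_{1/a}=\Theta_{4\pi}$; this is the use to which it will be put.
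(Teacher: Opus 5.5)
Your proposal is correct and is exactly the paper's proof: Lemma~\ref{lem:Qt-explicit} gives $Q_t(x)=\frac{1}{4\pi}\Ei(2t)$, and Lemma~\ref{lem:E1-bound} with $\sigma=2t$ then yields \eqref{eq:Qt-log-final}. Your remark that the bound holds for every $x$, and so verifies \eqref{eq:log-Q-hyp} with $a=\frac{1}{4\pi}$ and $\tau=\frac12$, also matches how the paper uses the result in Lemma~\ref{lem:ruf-endpoint}.
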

\begin{proof}
Estimate \eqref{eq:Qt-log-final} follows from Lemma~\ref{lem:Qt-explicit} and
Lemma~\ref{lem:E1-bound} with $\sigma=2t$.
\end{proof}
 The abstract endpoint Proposition~\ref{prop:abstract-endpoint}  gives the following rank-only bound.

 \begin{lemma}[Rank-only endpoint Moser--Trudinger type inequality]
\label{lem:ruf-endpoint}
For every $N\ge1$ and every $H^1$-orthonormal family $\{u_n\}_{n=1}^N$, we have
\begin{equation} \label{eq:ruf-endpoint}
 \int_{\R^2}\Theta_{4\pi}(\rho(x))\,dx\le CN,
\end{equation}
with $\Theta_{4\pi}$ as in~\eqref{eq:Theta-def} and a universal
constant $C>0$.
\end{lemma}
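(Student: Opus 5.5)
The plan is to apply Proposition~\ref{prop:abstract-endpoint} directly to $\cA=-\Delta+1$ on $L^2(\R^2)$, with $X=\R^2$ and Lebesgue measure.

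\emph{Identifying orthonormality.} We have $\cA\ge I$, so $a_*=1$. The form domain is $\scrQ=\operatorname{Dom}(\cA^{1/2})=H^1(\R^2)$, and $\langle \cA^{1/2}u,\cA^{1/2}v\rangle_{L^2}=\int(\nabla u\cdot\nabla\bar v+u\bar v)\,dx$. Hence $\scrQ$-orthonormality is exactly $H^1$-orthonormality.

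\emph{Verifying the hypotheses.} The heat semigroup has the explicit Gaussian kernel $k_\cA$. Since $u\in L^2$, $(t,x)\mapsto(e^{-t\cA}u)(x)=\int k_\cA(t,x,y)u(y)\,dy$ is continuous, hence jointly measurable. For the kernel of $\cA^{-1/2}e^{-t\cA}$, use subordination:
\[
G_t(x,y)=\pi^{-1/2}\int_0^\infty s^{-1/2}\,k_\cA(s+t,x,y)\,ds .
\]
This follows from $\mu^{-1/2}=\pi^{-1/2}\int_0^\infty s^{-1/2}e^{-s\mu}\,ds$. The integrand is nonnegative and jointly measurable, so $G$ is jointly measurable. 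Its $L^2$ norm in $y$ is finite, because $\|G_t(x,\cdot)\|_{L^2}^2=(\cA^{-1}e^{-2t\cA})(x,x)$ by the semigroup property. By Lemma~\ref{lem:Qt-explicit} this equals $\Ei(2t)/(4\pi)$, which is finite for $t>0$. This is the one routine technical point, and I expect no real obstacle. Alternatively, $G_t$ can be written as a convolution kernel through the Fourier multiplier $(1+|\xi|^2)^{-1/2}e^{-t(1+|\xi|^2)}$, which lies in $L^2$.

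\emph{Logarithmic bound and conclusion.} Corollary~\ref{cor:Qt-log} gives
\[
Q_t(x)\le \frac{1}{4\pi}\ln\Big(1+\frac{1/2}{t}\Big)\quad\text{for all }(t,x).
\]
This is \eqref{eq:log-Q-hyp} with $a=1/(4\pi)$ and $\tau=1/2$. Proposition~\ref{prop:abstract-endpoint} then yields $\int_{\R^2}\Theta_{4\pi}(\rho)\,dx\le CN$, since $\Theta_{1/a}=\Theta_{4\pi}$. The constant is
\[
C=\frac{e}{2a^2}+\frac{2C_0\tau}{a^2}=16\pi^2\Big(\frac e2+C_0\Big)=8\pi^2 e+16\pi^2 C_0 .
\]
This is universal, since $C_0$ is. The key feature to emphasize is that the coefficient $1/(4\pi)$ of the logarithm in Corollary~\ref{cor:Qt-log} is exactly what produces the endpoint exponent $4\pi$.
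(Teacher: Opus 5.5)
Your proposal is correct and is the paper's own proof: Corollary~\ref{cor:Qt-log} gives \eqref{eq:log-Q-hyp} with $a=1/(4\pi)$ and $\tau=1/2$, and Proposition~\ref{prop:abstract-endpoint} with $a_*=1$ then yields \eqref{eq:ruf-endpoint}. Your extra checks (the subordination formula for $G_t$, joint measurability of the heat flow, and the explicit constant $8\pi^2e+16\pi^2C_0$) are accurate details that the paper leaves implicit.
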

\begin{proof}
Corollary~\ref{cor:Qt-log} verifies~\eqref{eq:log-Q-hyp} with
$a=1/(4\pi)$ and $\tau=1/2$. Therefore, by using Proposition~\ref{prop:abstract-endpoint} with $a_*=1$, we obtain \eqref{eq:ruf-endpoint}.
\end{proof}

 \subsection{Scalar endpoint inequality and mixed rank--mass bound}
The scalar endpoint inequality provides the mass factor at rank one.

\begin{proposition}[Scalar endpoint Moser--Trudinger type inequality]
\label{prop:scalar-exact-growth}
There is a universal constant $C_{\rm sc}>0$ such that
\begin{equation} \label{eq:scalar-exact-growth}
 \int_{\R^2}\Theta_{4\pi}(|u|^2)\,dx
 \le C_{\rm sc}\|u\|_{L^2}^2 \quad \text{for all } u \in H^1(\R^2) \text{ with } \|\nabla u\|_{L^2} \leq 1.
\end{equation}
\end{proposition}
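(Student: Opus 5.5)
Since $\Theta_{4\pi}(|u|^2)$ depends only on $|u|$, and $\|\nabla|u|\|_{L^2}\le\|\nabla u\|_{L^2}$, we may assume $u\ge0$. Replacing $u$ by its symmetric decreasing rearrangement $u^*$ preserves $\int\Theta_{4\pi}(|u|^2)$ and $\|u\|_{L^2}$ and does not increase $\|\nabla u\|_{L^2}$ (P\'olya--Szeg\H{o}). So it suffices to treat radial, nonincreasing $u\ge0$ with $\|\nabla u\|_{L^2}\le1$. The integral is also invariant under the dilation $u(x)\mapsto u(x/\lambda)$, up to the same factor $\lambda^2$ on both sides, since $\|u(\cdot/\lambda)\|_{L^2}^2=\lambda^2\|u\|_{L^2}^2$ and $\nabla$-norm is dilation invariant. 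We may therefore normalize $\|u\|_{L^2}=1$ and must prove $\int\Theta_{4\pi}(u^2)\le C$.

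Split $\R^2$ at a radius $R$, for instance $R=1$ or $R$ with $|B_R|=\pi$. On the exterior $\{|x|\ge R\}$, the radial Strauss-type bound gives $\pi|x|^2u(x)^2\le\int_{B_{|x|}}u^2\le1$, so $u\le (\sqrt\pi R)^{-1}$ there. Since $\Theta_{4\pi}(s)\le C s$ for $s$ bounded (this is the $\sigma\le1$ Taylor bound of Proposition~\ref{prop:abstract-endpoint}), the exterior contributes at most $C\int u^2\le C$. This is the quantitative exterior estimate \eqref{eq:exterior-descent} alluded to in the introduction.

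On the interior ball $B_R$, set $v=u-u(R)$ on $B_R$. Then $v\in H^1_0(B_R)$ and $\|\nabla v\|_{L^2}\le1$. We have $u^2=(v+u(R))^2\le v^2+2u(R)v+u(R)^2$, where $u(R)\lesssim1$. The danger is the cross term: $e^{4\pi u^2}\le e^{4\pi v^2}e^{8\pi u(R)v}e^{C}$, and the linear factor $e^{8\pi u(R)v}$ destroys the endpoint. The division by $u^2$ in $\Theta$ is what must compensate.

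\textbf{This is the main obstacle.} I would use the sharper bound: with $\delta:=1-\|\nabla v\|^2_{L^2(B_R)}\ge\|\nabla u\|^2_{L^2(\R^2\setminus B_R)}$, compare $v$ with a rescaling. Writing $u^2\le (1+\epsilon)v^2+(1+1/\epsilon)u(R)^2$, choose $\epsilon$ comparable to $\delta$, so that $(1+\epsilon)v^2$ has Dirichlet energy $\le1$ and Moser's inequality \eqref{eq:classical-mt} applies on $B_R$. The cost $e^{C/\epsilon}$ is acceptable only if $\delta$ is bounded below. When $\delta$ is tiny, the exterior energy is small, so $u(R)$ is small and $\int_{\R^2\setminus B_R}u^2$ is controlled. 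One then redoes the split at a larger radius where $u$ is already of order $\delta^{1/2}$, using a one-dimensional Moser-type ODE estimate on $w(s)=u(Re^{-s/2})$.

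Concretely, I would pass to Moser's variable $|x|=Re^{-s/2}$, $w(s)=\sqrt{4\pi}\,u$. Then $\int_0^\infty w'(s)^2\,ds\le1$, $w(0)=\sqrt{4\pi}\,u(R)=:w_0$, and the interior integral becomes $\pi R^2\int_0^\infty \frac{e^{w^2-s}-e^{-s}-w^2e^{-s}}{w^2/4\pi}\,ds$. Using $w(s)\le w_0+\sqrt{s}\,\|w'\|_{L^2}$, the exponent satisfies $w^2-s\le 2w_0\sqrt s+w_0^2$. This is not yet integrable; the factor $w^{-2}\sim 1/s$ together with the Carleson--Chang/Moser level-set argument bounds the region where $w^2-s\ge -s/2$. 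The remaining technical step, where I expect the real difficulty, is controlling the linear drift $2w_0\sqrt s$. I would handle it by a case split on $w_0^2$ versus the exterior energy, via $w_0^2\le C\int_{|x|>R}|\nabla u|^2\cdot\|u\|_{L^2}$ type bounds. Combining the pieces gives \eqref{eq:scalar-exact-growth} with a universal $C_{\rm sc}$.
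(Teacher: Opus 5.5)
There is a genuine gap. Your reduction to a radial, nonincreasing $u\ge0$ with $\|u\|_{L^2}=1$ is fine, and so is your exterior bound via $\pi|x|^2u(x)^2\le\|u\|_{L^2}^2$. (That Strauss-type bound is not the estimate \eqref{eq:exterior-descent}; the latter is exactly what your argument lacks.) The gap is the interior estimate when the exterior energy $h=\int_{|x|>R}|\nabla u|^2$ is small. You flag this case but do not resolve it, and the tool you propose for it is false. A bound $u(R)^2\lesssim h\,\|u\|_{L^2}$ fails already for $u\equiv a$ on $B_1$ and $u(r)=a(1-\ln r/L)_+$ for $r\ge1$, with $L=\ln(1/a)$. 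Here $\|u\|_{L^2}\le1$ and $h=2\pi a^2/L$, yet $u(1)^2/h=L/(2\pi)\to\infty$ as $a\to0$. This logarithmic loss is genuine, so no linear case split on $w_0^2$ versus $h$ controls the drift $2w_0\sqrt s$.

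Worse, the interior must be estimated with no room to spare. Even at a radius where $u(R)=1$, the split $u^2\le v^2/(1-h)+1/h$ plus Moser on $B_R$ gives only $\int_{B_R}e^{4\pi u^2}\le C_{\rm MT}R^2e^{4\pi/h}$. The tail mass, however, is only guaranteed to satisfy $\int_{|x|>R}u^2\gtrsim R^2h^2e^{4\pi/h}$, and this is sharp for linear-in-$\ln r$ profiles. You are therefore short by exactly a factor $h^{-2}$. Neither the crude bound $u^{-2}\le u(R)^{-2}$ nor an unspecified Carleson--Chang level-set argument supplies it.

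The paper closes the argument with three ingredients you do not have.
\begin{itemize}
\item \textbf{Split at a level set.} Take $R$ with $u(R)=1$ rather than a fixed radius. Then $u\le1$ outside, where $\Theta_{4\pi}(u^2)\le Cu^2$, and $u\ge1$ inside.
\item \textbf{Exterior descent lemma.} In $t=\ln(|x|/R)$, Cauchy--Schwarz gives $u\ge1-\sqrt{ht/(2\pi)}$. Hence $u\gtrsim h$ for $t\in[2\pi/h-1,\,2\pi/h]$, which forces $R^2h^2e^{4\pi/h}\lesssim\int_{|x|>R}u^2$.
\item \textbf{The missing factor $h^2$.} It comes from the denominator of $\Theta_{4\pi}$ combined with the Cauchy--Schwarz deficit. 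Write $u=1+l\psi$ on $B_R$ with $l=\sqrt{1-h}$, $\psi\in H_0^1(B_R)$ and $\|\nabla\psi\|_{L^2}\le1$. Then $(1+l\ell)^2=\ell^2+1/h-h(\ell-l/h)^2$, and hence $e^{4\pi(1+l\ell)^2}/(1+l\ell)^2\le16h^2e^{4\pi/h}e^{4\pi\ell^2}$. For $\ell<l/(2h)$ the Gaussian deficit $e^{-\pi l^2/h}$ does the work; for $\ell\ge l/(2h)$ the bound $(1+l\ell)^{-2}\le4h^2/l^4$ does.
\end{itemize}
Moser on $B_R$ then yields $\int_{B_R}\Theta_{4\pi}(u^2)\le16C_{\rm MT}R^2h^2e^{4\pi/h}\lesssim\int_{|x|>R}u^2$. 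This exact cancellation of $h^{\pm2}$ is where the endpoint is tight; it mirrors the optimal power $2$ in \eqref{eq:IMN-exact-growth-intro}.
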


The proof of Proposition~\ref{prop:scalar-exact-growth} is deferred to Appendix~\ref{app:scalar-exact-growth}, in which we establish the
equivalence of~\eqref{eq:scalar-exact-growth}
and~\eqref{eq:IMN-exact-growth-intro} and derive Ruf's
inequality~\eqref{eq:ruf}.

The scalar estimate and the rank-only bound yield inequality~\eqref{eq:rank-one-defect}.
\begin{lemma} \label{lem:endpoint}
There is a universal constant $C>0$ such that for any
$H^1$-orthonormal family $\{u_n\}_{n=1}^N \subset H^1(\R^2)$ with density $\rho$, estimate~\eqref{eq:rank-one-defect} holds.
\end{lemma}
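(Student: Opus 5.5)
The plan is to split according to the rank. If $N=1$, the family is a single $u$ with $\|u\|_{H^1}=1$, so in particular $\|\nabla u\|_{L^2}\le1$. Proposition~\ref{prop:scalar-exact-growth} then gives directly
\[
\int_{\R^2}\Theta_{4\pi}(|u|^2)\,dx\le C_{\rm sc}\|u\|_{L^2}^2=C_{\rm sc}\Big(N-1+\int_{\R^2}\rho\,dx\Big).
\]
This is \eqref{eq:rank-one-defect} with any constant $C\ge C_{\rm sc}$.

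If $N\ge2$, we use Lemma~\ref{lem:ruf-endpoint}, which gives
\[
\int_{\R^2}\Theta_{4\pi}(\rho)\,dx\le C_{\rm r}N
\]
with a universal constant $C_{\rm r}$. Since $N\ge2$, we have $N\le 2(N-1)$. The mass term is nonnegative, so
\[
C_{\rm r}N\le 2C_{\rm r}(N-1)\le 2C_{\rm r}\Big(N-1+\int_{\R^2}\rho\,dx\Big).
\]
Taking $C:=\max\{C_{\rm sc},2C_{\rm r}\}$ gives \eqref{eq:rank-one-defect} in both cases, with $C$ universal.

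There is no genuine obstacle at this stage. All the analytic work sits in the two cited ingredients: the heat-semigroup Rumin argument for Lemma~\ref{lem:ruf-endpoint}, and the scalar endpoint bound of Proposition~\ref{prop:scalar-exact-growth}, which is proved in the appendix. The only point requiring care is the $N=1$ case. There the rank term $N-1$ vanishes, so the rank-only bound $CN$ cannot be absorbed into it, and the mass-dependent scalar inequality is genuinely needed. For this we only have to check that $H^1$-normalization implies the gradient constraint $\|\nabla u\|_{L^2}\le 1$ required by Proposition~\ref{prop:scalar-exact-growth}, which is immediate.
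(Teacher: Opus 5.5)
Your proposal is correct and matches the paper's proof: for $N=1$ you apply Proposition~\ref{prop:scalar-exact-growth} using $\|\nabla u\|_{L^2}\le\|u\|_{H^1}=1$, and for $N\ge2$ you apply Lemma~\ref{lem:ruf-endpoint} together with $N\le2(N-1)$. Like the paper, you then take $C=\max\{C_{\rm sc},2C_{\rm r}\}$.
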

\begin{proof}
If $N=1$, put $u=u_1$. Then $\|u\|_{H^1}=1$ and
$\|\nabla u\|_{L^2}\le1$. With $\rho=|u|^2$, we infer from Proposition~\ref{prop:scalar-exact-growth}
that
\[
 \int_{\R^2}\Theta_{4\pi}(\rho)\,dx
 \le C_{\rm sc}\|u\|_{L^2}^2
 =C_{\rm sc}\int_{\R^2}\rho\,dx.
\]
If $N\ge2$, thanks to Lemma~\ref{lem:ruf-endpoint} and
$N\le2(N-1)$, we have
\[
 \int_{\R^2}\Theta_{4\pi}(\rho)\,dx
 \leq \tilde C N\le2 \tilde C(N-1).
\]
Taking $C:=\max\{C_{\rm sc},2\tilde C\}$, we derive 
\eqref{eq:rank-one-defect} in both cases.
\end{proof}

\subsection{Optimality of \texorpdfstring{$4\pi$}{4pi} and sharpness of linear growth in the rank} \label{subsec:endpoint}
The next lemma shows that the endpoint coefficient $4\pi$ in
Lemma~\ref{lem:ruf-endpoint} cannot be replaced by any larger
coefficient.
\begin{lemma}[Sharpness of the endpoint coefficient $4\pi$]
\label{lem:sharp-critical-coefficient}
For every $\alpha>4\pi$ and every $N\ge1$,
\begin{equation}
 \sup_{\substack{\{u_n\}_{n=1}^N\subset H^1(\R^2)\\
                   H^1\text{-orthonormal}}}
 \int_{\R^2}\Theta_\alpha\!\left(\sum_{n=1}^N|u_n(x)|^2\right)\,dx
 =\infty.
 \label{eq:sharp-critical-coefficient}
\end{equation}
\end{lemma}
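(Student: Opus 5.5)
The plan is to reduce to a single concentrating function and then pad it with $N-1$ harmless companions. Since $\Theta_\alpha$ is nondecreasing on $[0,\infty)$ (the function $s\mapsto (e^{\alpha s}-1-\alpha s)/s$ is increasing), we have $\Theta_\alpha(\rho)\ge\Theta_\alpha(|u_1|^2)$ pointwise. It therefore suffices to find $H^1$-orthonormal families whose first member $u_1$ satisfies $\int\Theta_\alpha(|u_1|^2)\,dx\to\infty$.

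\emph{Step 1 (scalar blow-up).} Fix $\alpha>4\pi$. I would take the classical Moser sequence on the unit disc,
\[
 m_k(x)=\frac{1}{\sqrt{2\pi}}\begin{cases}\sqrt{\ln k},&|x|\le 1/k,\\ \ln(1/|x|)/\sqrt{\ln k},&1/k\le|x|\le1,\\ 0,&|x|\ge1,\end{cases}
\]
which satisfies $\|\nabla m_k\|_{L^2}=1$ and $\|m_k\|_{L^2}^2=O(1/\ln k)$. Normalize by setting $w_k=m_k/\|m_k\|_{H^1}$, so that $\|w_k\|_{H^1}=1$ and $w_k^2\ge (1-C/\ln k)\,m_k^2$. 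On $B_{1/k}$ this gives $\alpha w_k^2\ge \frac{\alpha}{4\pi}(2\ln k)-C'$. Hence
\[
\int\Theta_\alpha(w_k^2)\ge |B_{1/k}|\,\frac{e^{\alpha w_k^2}-1-\alpha w_k^2}{w_k^2}\gtrsim \frac{k^{-2}\,k^{\alpha/(2\pi)}}{\ln k}\to\infty,
\]
because $\alpha/(2\pi)>2$. This exponent bookkeeping (the $1/\ln k$ mass correction costs only a bounded factor) is the only delicate point.

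\emph{Step 2 (adding $N-1$ functions).} Choose fixed functions $\phi_2,\dots,\phi_N\in C_c^\infty(\R^2)$ supported in disjoint balls far from $B_1$, for instance translates of a bump, normalized to be $H^1$-orthonormal among themselves. Since their supports are disjoint from $\operatorname{supp}w_k\subset \overline{B_1}$, both the $L^2$ and the gradient inner products with $w_k$ vanish. Thus $\{w_k,\phi_2,\dots,\phi_N\}$ is $H^1$-orthonormal for every $k$, with no Gram--Schmidt correction needed. By monotonicity of $\Theta_\alpha$, the integral in \eqref{eq:sharp-critical-coefficient} is at least $\int\Theta_\alpha(w_k^2)\to\infty$, which proves the claim.

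I expect the main (mild) obstacle to be Step 1: checking that the $H^1$ normalization factor $(1+O(1/\ln k))^{-1}$ in the exponent changes $e^{\alpha w_k^2}$ only by a bounded multiplicative constant on $B_{1/k}$. It does, since $O(1/\ln k)\cdot\ln k=O(1)$.
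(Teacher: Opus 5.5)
Your proposal is correct and follows essentially the same route as the paper. The blow-up comes from the $H^1$-normalized Moser sequence on $B_1$, where the paper absorbs the normalization by passing to an exponent $2+\eta<\alpha/(2\pi)$ and you keep it as a bounded factor $e^{-C'}$. The case $N\ge2$ is then handled by $N-1$ disjointly supported $H^1$-normalized bumps outside $\overline{B_1}$ together with the monotonicity of $\Theta_\alpha$.
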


\begin{proof}
It is enough to construct one concentrating state. For an integer $k\ge2$, define the radial concentrating function
\[
 w_k(x):=\frac{1}{\sqrt{2\pi}}
 \begin{cases}
  \sqrt{\ln k},&0\le |x|\le k^{-1},\\[2mm]
  \dfrac{\ln(1/|x|)}{\sqrt{\ln k}},&k^{-1}<|x|<1,\\[3mm]
  0,&|x|\ge1.
 \end{cases}
\]
Then $w_k$ is continuous, piecewise smooth, and belongs to
$H_0^1(B_1)$. By a direct calculation in polar coordinates, we can show that  $\|\nabla w_k\|_{L^2}^2=1$.
Moreover,
\[
 m_k:=\|w_k\|_{L^2}^2 =\frac{\ln k}{2k^2}
   +\frac{1}{\ln k}\int_{1/k}^1\Big(\ln\Big(\frac{1}{r}\Big)\Big)^2r\,dr \leq \frac{\ln k}{2k^2}+\frac{1}{4 \ln k}
 \to 0 \quad \text{as } k \to \infty.
\]
Hence $u_k:=\frac{w_k}{\sqrt{1+m_k}}$
satisfies $\|u_k\|_{H^1}=1$. On the ball $B_{1/k}$, $|u_k|^2=s_k:=\frac{\ln k}{2\pi(1+m_k)}$.
Set $q:=\alpha/(2\pi)>2$ and choose
$\eta\in(0,q-2)$. Since $m_k\to0$, for sufficiently large $k$, we have $\frac{q}{1+m_k}\ge2+\eta$.
Since $k^{q/(1+m_k)}$ dominates $1+\alpha s_k=O(\ln k)$, for sufficiently large $k$,
$k^{q/(1+m_k)}-1-\alpha s_k\ge\frac{1}{2}k^{q/(1+m_k)}$.
It follows that
\[
 \int_{\R^2}\Theta_\alpha(|u_k|^2)\,dx
 \ge \frac{\pi}{k^2}\Theta_\alpha(s_k)
 \ge \frac{\pi}{2k^2s_k}k^{\frac{q}{1+m_k}}
 =\frac{\pi^2(1+m_k)}{\ln k}
   k^{\frac{q}{1+m_k} - 2}
 \ge \frac{\pi^2k^\eta}{\ln k}
 \to \infty.
\]
This proves~\eqref{eq:sharp-critical-coefficient} when $N=1$.

For a fixed $N\ge2$, choose $N-1$ smooth, compactly supported
$H^1$-normalized functions $\varphi_2,\ldots,\varphi_N$ whose supports
are mutually disjoint and lie outside $\overline{B_1}$. Then
$\{u_k,\varphi_2,\ldots,\varphi_N\}$ is $H^1$-orthonormal. From the
power series of $\Theta_\alpha$, we find that 
$\Theta_\alpha$ is increasing. Therefore
\[
 \int_{\R^2}\Theta_\alpha\!\left(
 |u_k|^2+\sum_{n=2}^N|\varphi_n|^2\right)\,dx
 \ge \int_{\R^2}\Theta_\alpha(|u_k|^2)\,dx \to \infty \quad \text{as } k \to \infty.
\]
Therefore, \eqref{eq:sharp-critical-coefficient} holds for $N \geq 2$.
\end{proof}

We now prove that the linear dependence on $N$ in the endpoint
inequality is optimal. Recall the quantity $K_N$ from
\eqref{eq:KN-definition}.

\begin{lemma}[Optimal linear growth in the rank]
\label{lem:sharp-linear}
There are universal constants $c_1,c_2>0$ such that for any $N \geq 1$,
\begin{equation}  \label{eq:sharp-linear}
 c_1 N \leq K_N \leq c_2N,
\end{equation}
where $K_N$ is defined in \eqref{eq:KN-definition}.
\end{lemma}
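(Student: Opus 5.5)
The upper bound $K_N\le c_2N$ is immediate from Lemma~\ref{lem:ruf-endpoint}: every $H^1$-orthonormal family of rank $N$ satisfies $\int_{\R^2}\Theta_{4\pi}(\rho)\,dx\le CN$, so we may take $c_2:=C$. (Alternatively, Lemma~\ref{lem:endpoint} together with $\int\rho\,dx\le N$ gives $K_N\le C(2N-1)\le 2CN$.) The work is therefore in the lower bound. We plan to prove it by the disjoint-translation construction announced in the introduction.

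For the lower bound, fix one nonzero smooth compactly supported function $\varphi$ with $\|\varphi\|_{H^1}=1$ and $\operatorname{supp}\varphi\subset B_1$. Set $c_1:=\int_{\R^2}\Theta_{4\pi}(|\varphi|^2)\,dx$. Since $\Theta_{4\pi}(s)>0$ for $s>0$ and $\varphi\not\equiv0$, we have $c_1>0$; it is finite because $\varphi$ is bounded and compactly supported. Choose points $x_1,\dots,x_N\in\R^2$ with $|x_n-x_m|\ge 2$ for $n\ne m$, for instance $x_n=(3n,0)$, and define $u_n:=\varphi(\cdot-x_n)$.

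Translation invariance of the $H^1$ norm gives $\|u_n\|_{H^1}=1$. The supports $B_1(x_n)$ are pairwise disjoint, so $u_n\overline{u_m}$ and $\nabla u_n\cdot\nabla\overline{u_m}$ vanish identically for $n\ne m$, and the family is $H^1$-orthonormal. On $B_1(x_n)$ the density reduces to $\rho=|u_n|^2$, and $\rho=0$ off $\bigcup_n B_1(x_n)$, where $\Theta_{4\pi}(0)=0$. Hence
\[
\int_{\R^2}\Theta_{4\pi}(\rho)\,dx=\sum_{n=1}^N\int_{B_1(x_n)}\Theta_{4\pi}(|u_n|^2)\,dx=Nc_1 .
\]
Taking the supremum gives $K_N\ge c_1N$.

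Both constants $c_1$ (determined by the fixed $\varphi$) and $c_2$ (from Lemma~\ref{lem:ruf-endpoint}) are universal, which proves \eqref{eq:sharp-linear}. No step is a real obstacle. The only point needing care is that disjointness of supports makes the density additive and the nonlinear functional split exactly, with no interaction terms. This holds because $\Theta_{4\pi}$ is applied pointwise and vanishes at $0$.
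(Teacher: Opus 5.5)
Your proposal is correct and follows the paper's proof essentially step for step. The upper bound comes directly from Lemma~\ref{lem:ruf-endpoint}. The lower bound uses $N$ disjoint translates $\varphi(\cdot-(3n,0))$ of a fixed $H^1$-normalized bump supported in $B_1$, so the density splits and $\int_{\R^2}\Theta_{4\pi}(\rho)\,dx=c_1N$ exactly.
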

\begin{proof}
The upper bound in \eqref{eq:sharp-linear} follows from
Lemma~\ref{lem:ruf-endpoint}. For the lower bound, take  a nonzero real-valued
$\psi\in C_c^\infty(B_1)$ and put
\[
 \varphi:=\frac{\psi}{\|\psi\|_{H^1}},
 \quad
 c_1:=\int_{\R^2}\Theta_{4\pi}(|\varphi(x)|^2)\,dx.
\]
Then $\|\varphi\|_{H^1}=1$. Moreover, $0<c_1<\infty$. Indeed, the finiteness of $c_1$ follows from the boundedness and compact support of $\varphi$, while the
positivity follows from $\Theta_{4\pi}(s)>0$ for $s>0$ and
$\varphi\not\equiv0$.

Choose $x_n:=(3n,0)$ and define
$u_n(x):=\varphi(x-x_n)$, $1\le n\le N$. Since
$\operatorname{supp}\varphi \subset B_1$, the
supports of the functions $u_n$ are pairwise disjoint. Therefore, the translation invariance of the $H^1$ norm and the disjointness imply
$\langle u_n,u_m\rangle_{H^1} =\delta_{nm}$ for $1 \le n,m\le N$, meaning $\{u_n\}_{n=1}^N$ is $H^1$-orthonormal. Since at a.e.\ point at most one $u_n$ is nonzero, and $\Theta_{4\pi}(0)=0$, we have
\begin{align*}
 \int_{\R^2}\Theta_{4\pi}\!\left(\sum_{n=1}^N|u_n(x)|^2\right)\,dx
 =\sum_{n=1}^N\int_{\R^2}\Theta_{4\pi}(|u_n(x)|^2)\,dx
 =N\int_{\R^2}\Theta_{4\pi}(|\varphi(x)|^2)\,dx=c_1 N.
\end{align*}
Consequently $K_N\ge c_1 N$, which completes the proof.
\end{proof}

\begin{proof}[\textbf{Proof of Theorem~\ref{thm:intro-whole-space}}]
The endpoint estimate~\eqref{eq:rank-one-defect} follows from Lemma \ref{lem:endpoint}.
Lemma~\ref{lem:sharp-critical-coefficient} gives
\eqref{eq:intro-alpha-sharp} for every $\alpha>4\pi$ and every
$N\ge1$. Finally, Lemma~\ref{lem:sharp-linear} yields ~\eqref{eq:KN-two-sided}.
\end{proof}

\subsection{Subcritical rank--mass inequality}
In this subsection, we establish a subcritical Moser--Trudinger type inequality.

\begin{lemma}
\label{lem:subcritical-interpolation}
There is a universal constant $C>0$ such that, for any
$0<\varepsilon\le1/2$, we have
\begin{equation}  \label{eq:subcritical-pointwise-interpolation}
 e^{4\pi(1-\varepsilon)s}-1
 \le \frac{C}{\varepsilon}\Theta_{4\pi}(s)^{1-\frac{\varepsilon}{2}}s^{\frac{\varepsilon}{2}}, \quad \forall s \geq 0.
\end{equation}
At $s=0$ the right side is understood to be zero.
\end{lemma}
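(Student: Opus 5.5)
The plan is to reduce the inequality to a one-variable estimate by scaling. Put $\sigma:=4\pi s$. Then $\Theta_{4\pi}(s)=4\pi\,(e^\sigma-1-\sigma)/\sigma$, and the right-hand side of \eqref{eq:subcritical-pointwise-interpolation} becomes, up to factors bounded above and below uniformly in $\varepsilon\in(0,1/2]$,
\[
\frac{C}{\varepsilon}\Big(\frac{e^\sigma-1-\sigma}{\sigma}\Big)^{1-\frac\varepsilon2}\sigma^{\frac\varepsilon2}.
\]
It therefore suffices to show that $e^{(1-\varepsilon)\sigma}-1\le \frac{C}{\varepsilon}F(\sigma)^{1-\varepsilon/2}\sigma^{\varepsilon/2}$ for all $\sigma>0$, where $F(\sigma):=(e^\sigma-1-\sigma)/\sigma$. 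The case $s=0$ is trivial, since both sides vanish. I would split into the regimes $0<\sigma\le 1$ and $\sigma\ge1$.

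\emph{Small $\sigma$.} For $\sigma\le1$ the left side is at most $(1-\varepsilon)\sigma e^{\sigma}\le e\sigma$. On the right, $F(\sigma)\ge \sigma/2$, so
\[
F^{1-\varepsilon/2}\sigma^{\varepsilon/2}\ge 2^{-1}\sigma^{1-\varepsilon/2}\sigma^{\varepsilon/2}=\sigma/2.
\]
Hence the inequality holds with a constant independent of $\varepsilon$, even without using the factor $1/\varepsilon$.

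\emph{Large $\sigma$.} This is the main point, because it is where the factor $1/\varepsilon$ is needed. For $\sigma\ge1$ one has $e^\sigma-1-\sigma\ge c\,e^\sigma$ with a universal $c>0$ (for instance $c=1-2/e$), so $F(\sigma)\ge c\,e^\sigma/\sigma$. This gives
\[
F^{1-\varepsilon/2}\sigma^{\varepsilon/2}\ge c\, e^{(1-\varepsilon/2)\sigma}\sigma^{-1+\varepsilon}\ge c\,e^{(1-\varepsilon/2)\sigma}\sigma^{-1}.
\]
The left side is at most $e^{(1-\varepsilon)\sigma}$, so it remains to bound
\[
\sup_{\sigma\ge1}\sigma e^{-\varepsilon\sigma/2}\le \frac{2}{e\varepsilon},
\]
which follows from elementary calculus: the maximum of $\sigma e^{-\varepsilon\sigma/2}$ is attained at $\sigma=2/\varepsilon$.

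This is exactly the $\varepsilon^{-1}$ dependence claimed. Combining the two regimes and undoing the scaling $\sigma=4\pi s$ (the factors $(4\pi)^{1-\varepsilon/2}$ and $(4\pi)^{-\varepsilon/2}$ lie between universal positive bounds for $\varepsilon\le1/2$) yields \eqref{eq:subcritical-pointwise-interpolation} with a universal $C$. The only delicate step is tracking the exponents in the large-$\sigma$ regime, so that the loss $\sigma^{-1+\varepsilon}$ is absorbed by the gain $e^{\varepsilon\sigma/2}$ at cost $O(1/\varepsilon)$.
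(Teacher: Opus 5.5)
Your proof is correct and follows essentially the same route as the paper. It uses the same rescaling $\sigma=4\pi s$ and a bounded regime where $F(\sigma)\ge\sigma/2$ suffices without the factor $\varepsilon^{-1}$; in the large regime, $F(\sigma)\ge c\,e^{\sigma}/\sigma$ reduces everything to maximizing $\sigma^{a}e^{-\varepsilon\sigma/2}$. The only differences are cosmetic: the paper splits at $2$ rather than $1$, and it keeps the factor $t^{1-\varepsilon}$ where you bound it by $\sigma$.
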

\begin{proof}
Put $t=4\pi s$ and
\[
T(t):=\frac{e^t-1-t}{t},\quad t>0.
\]
Since $\Theta_{4\pi}(s)=4\pi T(t)$ and $s=t/(4\pi)$,
 \[
\Theta_{4\pi}(s)^{1-\varepsilon/2}s^{\varepsilon/2}
=(4\pi)^{1-\varepsilon}T(t)^{1-\varepsilon/2}t^{\varepsilon/2}.
\]
The factor $(4\pi)^{1-\varepsilon}$ is bounded above and below by positive universal constants for $0<\varepsilon\le1/2$. Thus it suffices to prove, with a possibly different universal constant,
\begin{equation}  \label{eq:T-interpolation}
 e^{(1-\varepsilon)t}-1
 \leq \frac{C}{\varepsilon}  T(t)^{1-\frac{\varepsilon}{2}}t^{\frac{\varepsilon}{2}}.
\end{equation}
For $0<t\le2$, by Taylor's formula, we have
$T(t)\ge t/2$, whereas
$e^{(1-\varepsilon)t}-1\le e^2t$.  Hence
\eqref{eq:T-interpolation} holds in this range, even without the
factor $\varepsilon^{-1}$.

For $t\ge2$, there is $c>0$ such that $T(t)\ge ce^t/t$.  Consequently, 
\[
 \frac{e^{(1-\varepsilon)t}-1}{T(t)^{1-\frac{\varepsilon}{2}}t^{\frac{\varepsilon}{2}}} \leq C e^{-\frac{\varepsilon t}{2}}t^{1-\varepsilon} \leq C\sup_{t>0}(e^{-\frac{\varepsilon t}{2}} t^{1-\varepsilon})
  = C \left(\frac{2(1-\varepsilon)}{e\varepsilon}\right)^{1-\varepsilon}
  \le\frac{C}{\varepsilon}.
\]
This implies~\eqref{eq:T-interpolation}. The proof is complete.
\end{proof}

 \begin{corollary}[Subcritical rank-mass Moser--Trudinger type inequality]
\label{cor:ruf-subcritical}
For every $H^1$-orthonormal family $\{u_n\}_{n=1}^N$, and $0<\varepsilon \leq \frac{1}{2}$, one has
\begin{equation} \label{eq:ruf-subcritical}
\int_{\R^2}\Big(e^{4\pi(1-\varepsilon)\rho(x)}-1\Big)\,dx
\le \frac{C}{\varepsilon}
 N^{1-\frac{\varepsilon}{2}}
 \left(\int_{\R^2}\rho\,dx\right)^{\frac{\varepsilon}{2}},
\end{equation}
where $C>0$ is a universal constant.
When $N=1$, every $u \in H^1(\R^2)$ with $\| u \|_{H^1} = 1$ satisfies
\begin{equation}  \label{eq:scalar-subcritical-mass}
 \int_{\R^2}\Big(e^{4\pi(1-\varepsilon)|u|^2}-1\Big)\,dx
 \le\frac{C}{\varepsilon}\|u\|_{L^2}^2.
\end{equation}
\end{corollary}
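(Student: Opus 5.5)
The plan is to integrate the pointwise interpolation of Lemma~\ref{lem:subcritical-interpolation} with $s=\rho(x)$ and then apply H\"older's inequality. That gives
\[
 \int_{\R^2}\big(e^{4\pi(1-\varepsilon)\rho}-1\big)\,dx
 \le \frac{C}{\varepsilon}\int_{\R^2}\Theta_{4\pi}(\rho)^{1-\frac{\varepsilon}{2}}\rho^{\frac{\varepsilon}{2}}\,dx
 \le \frac{C}{\varepsilon}\Big(\int_{\R^2}\Theta_{4\pi}(\rho)\,dx\Big)^{1-\frac{\varepsilon}{2}}\Big(\int_{\R^2}\rho\,dx\Big)^{\frac{\varepsilon}{2}},
\]
using the conjugate exponents $p=(1-\varepsilon/2)^{-1}$ and $p'=2/\varepsilon$. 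On the set $\{\rho=0\}$ both sides of the pointwise inequality vanish, so no issue arises there.

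For the first factor we invoke the rank-only bound of Lemma~\ref{lem:ruf-endpoint}, namely $\int\Theta_{4\pi}(\rho)\,dx\le \tilde CN$. This yields $\tilde C^{1-\varepsilon/2}N^{1-\varepsilon/2}$. Since $\varepsilon\in(0,1/2]$, the power $\tilde C^{1-\varepsilon/2}$ is at most $\max\{1,\tilde C\}$, so it can be absorbed into a universal constant. This proves \eqref{eq:ruf-subcritical}.

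For $N=1$ we use Proposition~\ref{prop:scalar-exact-growth} instead of Lemma~\ref{lem:ruf-endpoint}. Since $\|u\|_{H^1}=1$ gives $\|\nabla u\|_{L^2}\le1$, we get $\int\Theta_{4\pi}(|u|^2)\,dx\le C_{\rm sc}\|u\|_{L^2}^2$. Inserting this into the H\"older bound gives a right-hand side of
\[
\frac{C}{\varepsilon}\,C_{\rm sc}^{1-\varepsilon/2}\,\|u\|_{L^2}^{2(1-\varepsilon/2)}\,\|u\|_{L^2}^{\varepsilon}=\frac{C'}{\varepsilon}\|u\|_{L^2}^2,
\]
which is \eqref{eq:scalar-subcritical-mass}.

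There is no genuine obstacle here. The only points to check are that the constants stay uniform in $\varepsilon$, which holds because the powers are bounded, and that H\"older's inequality is applied with exponents that are valid for all $\varepsilon\in(0,1/2]$.
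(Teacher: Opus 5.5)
Your proposal is correct and is essentially the paper's own proof: you apply the pointwise interpolation of Lemma~\ref{lem:subcritical-interpolation}, then H\"older's inequality with exponents $(1-\varepsilon/2)^{-1}$ and $2/\varepsilon$, then Lemma~\ref{lem:ruf-endpoint} for general $N$ and Proposition~\ref{prop:scalar-exact-growth} for $N=1$. Your remark that $\tilde C^{1-\varepsilon/2}\le\max\{1,\tilde C\}$, which keeps the constant uniform in $\varepsilon$, is a detail the paper leaves implicit.
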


\begin{proof}
By Lemma~\ref{lem:subcritical-interpolation},
H\"older's inequality, and Lemma~\ref{lem:ruf-endpoint}, we obtain
\begin{align*}
 \int_{\R^2}(e^{4\pi(1-\varepsilon)\rho}-1)\,dx
 &\le\frac{C}{\varepsilon}
      \int_{\R^2}\Theta_{4\pi}(\rho)^{1-\frac{\varepsilon}{2}}\rho^{\frac{\varepsilon}{2}}\,dx\\
 &\le\frac{C}{\varepsilon}
      \left(\int_{\R^2}\Theta_{4\pi}(\rho)\,dx\right)^{1-\frac{\varepsilon}{2}}
      \left(\int_{\R^2}\rho\,dx\right)^{\frac{\varepsilon}{2}} \leq \frac{C}{\varepsilon} N^{1-\frac{\varepsilon}{2}}
      \left(\int_{\R^2}\rho\,dx\right)^{\frac{\varepsilon}{2}},
\end{align*}
which is~\eqref{eq:ruf-subcritical}.

When $N=1$, applying Lemma~\ref{lem:subcritical-interpolation}, H\"older's inequality and Proposition~\ref{prop:scalar-exact-growth}, we obtain
 \[
 \int_{\R^2}(e^{4\pi(1-\varepsilon)|u|^2}-1)\,dx
 \le\frac{C}{\varepsilon}(C_{\rm sc}\|u\|_{L^2}^2)^{1-\varepsilon/2}\|u\|_{L^2}^{\varepsilon}
 \le\frac{C'}{\varepsilon} \|u\|_{L^2}^2,
\]
which proves~\eqref{eq:scalar-subcritical-mass}. 
\end{proof}

\begin{remark}
Since $H^1$-orthonormality gives $\int_{\R^2}\rho\,dx\le N$, the mixed
rank--mass factor in~\eqref{eq:ruf-subcritical} satisfies
$N^{1-\varepsilon/2}(\int_{\R^2}\rho\,dx)^{\varepsilon/2}\le N$.
Thus the estimate \eqref{eq:ruf-subcritical} implies a rank-only linear bound. The spectral test
\eqref{eq:subcritical-mass-obstruction} shows that its rank
contribution cannot be replaced by the mass alone.
\end{remark}

\section{Dirichlet families} \label{sec:dirichlet-families}
\subsection{Construction of an $H^1$-orthonormal family}

Recall that $\mu_j$ and $\psi_j$ denote the eigenvalues and corresponding eigenfunctions of the Dirichlet Laplacian $-\Delta$ in the unit ball $B_1$. Moreover, $\{\psi_j\}_{j\ge1}\subset H_0^1(B_1)$ forms an orthonormal basis of $L^2(B_1)$. Thus for any $R>0$, the Dirichlet operator
$\cA_R:=-\Delta+1$ on  $L^2(B_R)$ has eigenvalues
\begin{equation}\label{eq:lambdaj}
 \lambda_j(R):=1+\frac{\mu_j}{R^2},\qquad j\in\N.
\end{equation}

By the classical Dirichlet Weyl law~\cite{Weyl1912} on the unit ball,
 \begin{equation} \label{eq:weyl-ball}
\mu_j =\ 4j\,(1+o(1)_{j \to \infty}).
\end{equation}

\begin{lemma}[$H^1$-orthonormal family from Dirichlet truncation]
\label{lem:dirichlet-family}
For $N\ge1$ and $R>0$, let $\{u_j^{(R)}\}_{j=1}^N$ and
$\rho_{N,R}$ be defined by~\eqref{u_j^(R)} and~\eqref{rho_NR}, respectively. Then the
functions $u_j^{(R)}$ belong to $H^1(\R^2)$ and form an
$H^1$-orthonormal  family. The density $\rho_{N,R}$ is continuous,
is supported in $\overline{B_R}$, and satisfies
\begin{equation} \label{eq:rhoN-dirichlet}
\int_{\R^2}\rho_{N,R}(x)\,dx\ =\ \sum_{j=1}^N \lambda_j(R)^{-1}.
\end{equation}
\end{lemma}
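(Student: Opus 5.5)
The plan is a direct verification by scaling and extension by zero. First I record the regularity of $\psi_j$. Since $\psi_j\in H_0^1(B_1)$, its extension by zero lies in $H^1(\R^2)$, with distributional gradient equal to $\nabla\psi_j$ on $B_1$ and $0$ outside. Hence $u_j^{(R)}$, a dilate of this extension multiplied by a constant, lies in $H^1(\R^2)$ and is supported in $\overline{B_R}$. For the continuity of $\rho_{N,R}$, I would use that each $\psi_j$ is smooth in $B_1$ by elliptic regularity. Since $\partial B_1$ is smooth, $\psi_j\in C^\infty(\overline{B_1})$ and vanishes on $\partial B_1$. One can also see this explicitly: $\psi_j$ is a Bessel function $J_m(\sqrt{\mu_j}r)$ times $\cos(m\theta)$ or $\sin(m\theta)$, and $J_m$ vanishes at $r=1$. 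Therefore the zero extension is continuous on $\R^2$, and so is the finite sum $\rho_{N,R}$.

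Next I compute the $H^1(\R^2)$ inner products. Set $v_j(x):=R^{-1}\psi_j(x/R)$ on $B_R$. The change of variables $x=Ry$ gives
\[
\int_{B_R}v_j\overline{v_k}\,dx=\int_{B_1}\psi_j\overline{\psi_k}\,dy=\delta_{jk}
\]
and
\[
\int_{B_R}\nabla v_j\cdot\overline{\nabla v_k}\,dx=R^{-2}\int_{B_1}\nabla\psi_j\cdot\overline{\nabla\psi_k}\,dy=R^{-2}\mu_j\delta_{jk}.
\]
The last identity follows from the weak eigenvalue equation for $\psi_j$ tested against $\psi_k\in H_0^1(B_1)$. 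Because the functions vanish outside $B_R$, the $H^1(\R^2)$ inner product of the zero extensions is
\[
\langle v_j,v_k\rangle_{H^1}=(1+\mu_j/R^2)\delta_{jk}=\lambda_j(R)\delta_{jk}.
\]
Dividing by $\lambda_j(R)^{1/2}$, which is exactly the normalization in \eqref{u_j^(R)}, yields $H^1$-orthonormality.

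Finally, I obtain the mass identity \eqref{eq:rhoN-dirichlet} from
\[
\|u_j^{(R)}\|_{L^2}^2=\lambda_j(R)^{-1}\|v_j\|_{L^2}^2=\lambda_j(R)^{-1},
\]
summed over $j=1,\dots,N$. The only step needing any care is the continuity of $\rho_{N,R}$ across $\partial B_R$, meaning up-to-the-boundary regularity of the Dirichlet eigenfunctions. I would handle it by the smooth-boundary elliptic regularity theorem or by the explicit Bessel representation on the disk. Everything else is routine scaling.
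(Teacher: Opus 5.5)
Your proposal is correct and follows essentially the same route as the paper: zero extension and dilation give membership in $H^1(\R^2)$, continuity of the Dirichlet eigenfunctions up to $\partial B_1$ gives continuity of $\rho_{N,R}$, the eigenvalue equation gives $\langle v_j,v_k\rangle_{H^1}=\lambda_j(R)\delta_{jk}$ (you use its weak form, the paper uses the self-adjointness of $\cA_R$), and a change of variables gives the mass identity. The paper simply quotes the boundary continuity of $\psi_j$ as well known, and your elliptic-regularity justification is fine; the explicit Bessel form, however, describes only one particular basis of each eigenspace, although any admissible choice is a finite linear combination of such functions.
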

\begin{proof}
Since  $\psi_j\in H_0^1(B_1)$, $\psi_j(\cdot/R)\in H_0^1(B_R)$, and hence $u_j^{(R)}\in H^1(\R^2)$.
It is well-known that  $\psi_j\in C(\overline{B_1})$ with
$\psi_j|_{\partial B_1}=0$. Therefore $u_j^{(R)}$, and hence $\rho_{N,R}$, is continuous on $\R^2$ and supported in $\overline{B_R}$.

Using~ \eqref{u_j^(R)}, the self-adjointness of $\cA_R$, the eigenvalue
equation  $\cA_R \psi_j(\cdot/R)=
\lambda_j(R)\psi_j(\cdot/R)$, and orthonormality, we obtain
 \begin{align*}
\langle u_i^{(R)},u_j^{(R)}\rangle_{H^1}
&=
\lambda_i(R)^{-1/2} \lambda_j(R)^{-1/2}R^{-2}
\big\langle \psi_i(\cdot/R),
 \cA_R \psi_j(\cdot/R)\big\rangle_{L^2(B_R)}\\
&=
\lambda_i(R)^{-1/2} \lambda_j(R)^{-1/2}
\lambda_j(R) \delta_{ij}
=\delta_{ij}.
\end{align*}
Hence $\{u_j^{(R)}\}_{j=1}^N$ is $H^1$-orthonormal. From  \eqref{u_j^(R)}, we also have
 \begin{align*}
\int_{\R^2}\rho_{N,R}\,dx
&=\sum_{j=1}^N \lambda_j(R)^{-1}R^{-2}
  \int_{B_R}|\psi_j(x/R)|^2\,dx=\sum_{j=1}^N \lambda_j(R)^{-1}
  \|\psi_j\|_{L^2(B_1)}^2
= \sum_{j=1}^N \lambda_j(R)^{-1},
\end{align*}
which is~\eqref{eq:rhoN-dirichlet}.
\end{proof}

\subsection{Spectral estimates}

 For $y\in\overline{B_1}$ and $\Lambda\ge0$, define
\begin{equation}\label{def:N_y(mu)}
 N_y(\Lambda):=\sum_{j:\mu_j\le\Lambda}|\psi_j(y)|^2.
\end{equation}
The sum includes every eigenfunction with eigenvalue at most $\Lambda$,
counted with multiplicity, and is independent of the basis chosen within
each eigenspace. For each integer $k\in\{0,\ldots,N\}$, define
\begin{equation}\label{def:S_{N-k_N,N}(y)}
 S_{k,N}(y):=\sum_{j=k+1}^N|\psi_j(y)|^2;
\end{equation}
the empty sum for $k=N$ is zero. A choice of a partial eigenspace may
change $S_{0,N}$, but its contribution is bounded by the corresponding
full spectral projector. The next  estimates apply in both the
intermediate-density and low-density regimes.

\begin{lemma}
\label{lem:sublinear-cluster}
Define the sequences $\{\mu_N^{<}\}_{N\ge1}$ and $\{k_N\}_{N\ge1}$ by
 \[
\mu_N^{<}=\sup\Big(\{0\}\cup\big\{\mu_j\mid\mu_j<\mu_N\big\}\Big),
\quad
k_N=\#\big\{1\le j\le N\mid\mu_j=\mu_N\big\}.
\]
Then
 \begin{equation} \label{est:gap}
k_N=o(N)_{N\to\infty}
\quad\text{and}\quad
\mu_N-\mu_N^{<}=o(N)_{N\to\infty}.
\end{equation}
\end{lemma}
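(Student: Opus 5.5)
The plan is to derive both statements in \eqref{est:gap} from the Weyl law \eqref{eq:weyl-ball} alone, $\mu_j=4j(1+o(1))$. Write $\mu_j=4j(1+\epsilon_j)$ with $\epsilon_j\to0$.

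\emph{The multiplicity count $k_N$.} Let $m=N-k_N+1$ be the first index with $\mu_m=\mu_N$. Then $\mu_m=\mu_N$, so
\[
4m(1+\epsilon_m)=4N(1+\epsilon_N).
\]
Suppose $k_N\ge \delta N$ along a subsequence for some $\delta>0$. Then $m\le (1-\delta)N+1$, and $m\to\infty$ is impossible to avoid only if $m$ stays bounded. If $m$ stays bounded, then $\mu_m$ is bounded while $\mu_N\to\infty$, which is a contradiction. Otherwise $m\to\infty$, so $\epsilon_m\to0$, and
\[
\frac{N}{m}=\frac{1+\epsilon_m}{1+\epsilon_N}\to1,
\]
contradicting $N/m\ge 1/(1-\delta)+o(1)$. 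Hence $k_N=o(N)$.

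\emph{The gap $\mu_N-\mu_N^{<}$.} If $m=1$, then $\mu_N=\mu_1$ is bounded; this occurs only for finitely many $N$, since $\mu_1$ is simple, or can simply be ignored since $\mu_N\to\infty$. For large $N$ we have $m\ge2$, and $\mu_N^{<}=\mu_{m-1}$ because the eigenvalues are ordered and $\mu_{m-1}<\mu_m$. Then
\[
\mu_N-\mu_N^{<}=\mu_m-\mu_{m-1}=4m(1+\epsilon_m)-4(m-1)(1+\epsilon_{m-1})=4+4m(\epsilon_m-\epsilon_{m-1})+4\epsilon_{m-1}.
\]
Since $m\le N$ and $m\to\infty$ as $N\to\infty$ (because $\mu_m=\mu_N\to\infty$), this is at most $4+o(m)=o(N)$.

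\emph{Expected subtle point.} The only delicate step is justifying $m\to\infty$, and this follows from the unboundedness of $\mu_N$. Otherwise the argument is just the statement that $j\mapsto\mu_j$ is asymptotically linear, so jumps and plateaus are sublinear.
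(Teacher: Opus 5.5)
Your argument is correct and is essentially the paper's proof: both take the first index $m=j_0(N)=N-k_N+1$ of the top cluster, get $m\to\infty$ from $\mu_m=\mu_N\to\infty$, use Weyl's law to obtain $m/N\to1$ (hence $k_N=o(N)$), and identify $\mu_N^{<}=\mu_{m-1}$ for large $N$. The only differences are cosmetic. You bound the gap directly as $\mu_m-\mu_{m-1}=4+o(m)=o(N)$, whereas the paper shows $\mu_{m-1}/N\to4$ from $m/N\to1$ and subtracts this from $\mu_N/N\to4$. Your case split on bounded $m$ in the first part is unnecessary, since $m\to\infty$ always holds.
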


\begin{proof}
 Define the sequence $\{j_0(N)\}_{N\ge1}$ by $j_0(N)=N-k_N+1$. Since $\mu_{j_0(N)}=\mu_N\to\infty$,
it follows that $j_0(N)\to\infty$. By Weyl's law, we have
\[
 \frac{j_0(N)}{N}
 =\frac{\mu_N/N}{\mu_{j_0(N)}/j_0(N)}\to1.
\]
 Thus $k_N/N=1-j_0(N)/N+1/N \to 0$, which implies the first equality in \eqref{est:gap}.
 
 Next, for all sufficiently large $N$, we have  $j_0(N)\ge2$ and
$\mu_N^{<}=\mu_{j_0(N)-1}$. Therefore
\[
 \frac{\mu_N^{<}}{N}
 =\frac{\mu_{j_0(N)-1}}{j_0(N)-1}\frac{j_0(N)-1}{N}
 \to4.
\]
 Subtracting this from $\mu_N/N\to4$ implies the second equality in \eqref{est:gap}.
\end{proof}

The following local spectral estimate from
\cite[Theorem~5.1, formula~(5.3)]{Hormander68} will be useful in the sequel.

\begin{proposition}[H\"ormander's local spectral
estimate]
\label{prop:hormander-local-weyl}
Let $\Omega\subset\R^2$ be a bounded domain with smooth boundary and $K\Subset\Omega$ be compact. Let
$\{\mu_{\Omega,j}\}_{j\ge1}$ be the eigenvalues of the Dirichlet Laplacian $-\Delta$ in $\Omega$ with
corresponding $L^2(\Omega)$-orthonormal eigenfunctions $\{\psi_{\Omega,j}\}_{j \geq 1}$.  Then there is $C=C(\Omega,K)<\infty$ such that for any $\Lambda \geq 1$ and any $y \in K$,
\[
\Big|\sum_{j\,\mid\,\mu_{\Omega,j} \le\Lambda}|\psi_{\Omega,j}(y)|^2-\frac{\Lambda}{4\pi}\Big|
\ \le\
C\sqrt\Lambda.
\]
\end{proposition}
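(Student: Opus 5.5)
The statement is Hörmander's local spectral estimate, cited from \cite[Theorem~5.1, formula~(5.3)]{Hormander68}, so the plan is to reproduce the standard wave-equation argument in the special case of the Dirichlet Laplacian on $\Omega\subset\R^2$, restricted to an interior compact set $K$.

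\textbf{Setup.} Write $\lambda_j=\sqrt{\mu_{\Omega,j}}$, fix $y\in K$, and introduce the nondecreasing counting function
\[
e(y,\lambda):=\sum_{\lambda_j\le\lambda}|\psi_{\Omega,j}(y)|^2 .
\]
The claim is then $e(y,\lambda)=\lambda^2/(4\pi)+O(\lambda)$ uniformly for $y\in K$, with $\Lambda=\lambda^2$.

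\textbf{Step 1: the smoothed counting function.} Set $d:=\operatorname{dist}(K,\partial\Omega)>0$. Choose an even Schwartz function $\chi$ with $\hat\chi(0)=1$ and $\operatorname{supp}\hat\chi\subset(-d/2,d/2)$. Consider the convolution
\[
(\chi*de(y,\cdot))(\lambda)=\frac{1}{2\pi}\int\hat\chi(s)e^{is\lambda}\,\big(\cos(s\sqrt{-\Delta_\Omega})\big)(y,y)\,ds,
\]
modulo the harmless term at $-\lambda$. By finite propagation speed, for $|s|<d/2$ the Dirichlet wave kernel $\cos(s\sqrt{-\Delta_\Omega})(x,y')$ near $(y,y)$ coincides with the free wave kernel on $\R^2$. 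The boundary is therefore invisible. Computing with the free kernel gives
\[
(\chi*de(y,\cdot))(\lambda)=\frac{\lambda}{2\pi}+O(1)
\]
uniformly in $y\in K$, since this is the derivative of $\lambda^2/(4\pi)$. Likewise, for every $\mu$,
\[
\chi*e(y,\cdot)(\mu)=\frac{\mu^2}{4\pi}+O(\mu+1).
\]

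\textbf{Step 2: Tauberian passage.} This is the main step. First derive a local bound
\[
e(y,\mu+1)-e(y,\mu)\le C(1+|\mu|)
\]
from Step 1. To do so, choose $\chi$ with $\chi\ge0$ and $\chi\ge c>0$ on $[-1,1]$; this is possible by replacing $\hat\chi$ with $\hat\chi*\hat\chi$ of small support, which makes $\chi=|\chi_0|^2$. Then use the positivity of the measure $de(y,\cdot)$.

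Next write
\[
e(y,\lambda)-\chi*e(y,\cdot)(\lambda)=\int\chi(\mu)\big(e(y,\lambda)-e(y,\lambda-\mu)\big)\,d\mu .
\]
Split the integral dyadically in $|\mu|$. By the local bound, the block $|\mu|\sim 2^k$ contributes $O\big(2^k(1+\lambda+2^k)\big)\sup_{|\mu|\sim 2^k}|\chi(\mu)|$. The rapid decay of $\chi$ makes the sum of these contributions $O(\lambda)$, uniformly for $y\in K$.

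\textbf{Conclusion.} Combining the two steps gives
\[
\big|e(y,\lambda)-\lambda^2/(4\pi)\big|\le C\lambda
\]
on $K$, which is the stated bound with $\Lambda=\lambda^2$.

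The delicate points are the positivity and support requirements on $\chi$, and the uniformity in $y$. Uniformity holds because only the free kernel and the fixed distance $d$ enter. Given that the paper simply cites this result, I would in fact invoke \cite{Hormander68} directly and present the above only as a sketch.
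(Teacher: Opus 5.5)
Your sketch is correct. The paper gives no argument of its own: it cites \cite[Theorem~5.1, formula~(5.3)]{Hormander68}. So your sketch is a self-contained alternative rather than a reproduction of the paper's proof.

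Hörmander's theorem covers general elliptic operators with variable coefficients. It is proved via a parametrix for the wave group built from oscillatory integrals, plus a Tauberian argument. You specialize to the flat second-order case, which simplifies things in three ways:
\begin{itemize}
\item finite propagation speed for $\cos(s\sqrt{-\Delta_\Omega})$ makes the Dirichlet and free kernels coincide near $(y,y)$ for $|s|<d$;
\item the free contribution is explicit;
\item the positivity-based Tauberian step finishes the job: the local bound $e(y,\mu+1)-e(y,\mu)\le C(1+|\mu|)$, followed by $\int\chi(\mu)\bigl(e(y,\lambda)-e(y,\lambda-\mu)\bigr)\,d\mu=O(1+\lambda)$.
\end{itemize}
Your route needs no parametrix and never uses smoothness of $\partial\Omega$, so it works on any bounded open set. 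It also yields a constant that depends on $K$ only through $d=\operatorname{dist}(K,\partial\Omega)$. The citation buys generality and brevity.

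If you write it out, fix the following details.
\begin{itemize}
\item Since $\frac{1}{2\pi}\int\hat\chi(s)e^{is\lambda}\cos(s\lambda_j)\,ds=\tfrac12\bigl[\chi(\lambda-\lambda_j)+\chi(\lambda+\lambda_j)\bigr]$, your displayed cosine identity is off by a factor $2$. Your final value $\lambda/(2\pi)$ is nonetheless correct.
\item The formula $\chi*e(y,\cdot)(\mu)=\mu^2/(4\pi)+O(\mu+1)$ holds only for $\mu\ge0$, since $\chi*e(y,\cdot)(\mu)\to0$ as $\mu\to-\infty$.
\item The dyadic block $|\mu|\sim 2^k$ carries an extra factor $2^k$ from its length. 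The rapid decay of $\chi$ absorbs it harmlessly.
\item Before Step 1 you need an a priori polynomial bound on $e(y,\lambda)$. It is what makes the pairing of $\hat\chi(s)e^{is\lambda}$ with the distribution $s\mapsto\cos(s\sqrt{-\Delta_\Omega})(y,y)$ meaningful, and what lets you discard the term at $-\lambda$. The heat-kernel argument in the proof of Lemma~\ref{lem:dilute-projector} supplies this. Using domain monotonicity (Lemma~\ref{lem:dirichlet-monotone}) with $t=\lambda^{-2}$, it gives $e(y,\lambda)\le e\lambda^2/(4\pi)$, which suffices.
\end{itemize}
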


\section{Intermediate-density regime}
\label{sec:intermediate-density}

We  study the Dirichlet family introduced in~\eqref{u_j^(R)} and
\eqref{rho_NR}. We first establish the mass limit and derive the obstruction to mass-only bounds. Then spectral
and heat-kernel estimates yield the interior density and energy limits. Throughout this section, let $\{\kappa_N\}_{N\ge1}$ be a sequence of positive numbers such that $\kappa_N\to\kappa\in(0,\infty)$ and set $R_N=\sqrt{\kappa_N N}$.
Let $\{u_j^{(R_N)}\}_{j=1}^N$ and $\rho_{N,R_N}$ be defined by
\eqref{u_j^(R)} and~\eqref{rho_NR}, respectively.

\subsection{Mass-per-state limit}
Define
\begin{equation}\label{def:g(kappa)}
 g(\kappa):=\frac{\kappa}{4}\ln\left(1+\frac{4}{\kappa}\right), \quad \kappa>0.
\end{equation}
The following limit concerns the mass per state.

 \begin{lemma}
\label{lem:density-family}
We have
\begin{equation}\label{eq:density-limit}
 \lim_{N\to\infty}\frac{1}N\int_{\R^2}\rho_{N,R_N}(x)\,dx
 =\lim_{N\to\infty}\frac{1}N\sum_{j=1}^N\lambda_j(R_N)^{-1}
 =g(\kappa).
\end{equation}
\end{lemma}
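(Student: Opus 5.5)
The first equality in \eqref{eq:density-limit} is just the mass identity \eqref{eq:rhoN-dirichlet} of Lemma~\ref{lem:dirichlet-family} divided by $N$. The content is the second equality, which I will prove by viewing the sum as a Riemann sum driven by Weyl's law \eqref{eq:weyl-ball}. With $R_N^2=\kappa_N N$ we have
\[
\frac1N\sum_{j=1}^N\lambda_j(R_N)^{-1}=\frac1N\sum_{j=1}^N\frac{1}{1+\mu_j/(\kappa_N N)} .
\]
Put $f_N(s):=1/(1+\mu_{\lceil sN\rceil}/(\kappa_N N))$ for $s\in(0,1]$. The sum then equals $\int_0^1 f_N(s)\,ds$ exactly, since $f_N$ is constant on each interval $((j-1)/N,j/N]$.

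The candidate limit is $f(s):=1/(1+4s/\kappa)$. Its integral is
\[
\int_0^1\frac{ds}{1+4s/\kappa}=\frac{\kappa}{4}\ln\Big(1+\frac4\kappa\Big)=g(\kappa),
\]
which matches \eqref{def:g(kappa)}.

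To prove $\int_0^1 f_N\to\int_0^1 f$, I will use dominated convergence. The bound $0\le f_N\le1$ holds trivially because $\mu_j\ge0$. For pointwise convergence, fix $s\in(0,1]$ and set $j=\lceil sN\rceil$. Then $j\to\infty$ and $j/N\to s$, so \eqref{eq:weyl-ball} gives $\mu_j/N=(\mu_j/j)(j/N)\to4s$. Since also $\kappa_N\to\kappa>0$, we get $\mu_j/(\kappa_N N)\to 4s/\kappa$, and hence $f_N(s)\to f(s)$.

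The only point needing care is $s$ near $0$, where $j$ need not be large relative to $N$. This causes no difficulty: pointwise convergence is only required for each fixed $s>0$, and the set $\{0\}$ has measure zero. Dominated convergence therefore applies, and the second equality follows.
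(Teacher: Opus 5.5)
Your proof is correct, but it takes a different route from the paper. You write the sum exactly as $\int_0^1 f_N(s)\,ds$ for the step function $f_N(s)=\bigl(1+\mu_{\lceil sN\rceil}/(\kappa_N N)\bigr)^{-1}$. You then conclude by dominated convergence, using only the trivial bound $0\le f_N\le 1$ and the pointwise consequence $\mu_{\lceil sN\rceil}/N\to 4s$ of Weyl's law for each fixed $s\in(0,1]$.

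The paper instead fixes $\eta\in(0,4)$ and uses the two-sided bound $(4-\eta)j\le\mu_j\le(4+\eta)j$ for $j\ge J_0(\eta)$. This sandwiches $\lambda_j(R_N)^{-1}$ between $h^{+}_{\eta,N}(j/N)$ and $h^{-}_{\eta,N}(j/N)$. It then discards the first $J_0-1$ terms at cost $(J_0-1)/N$, passes to the limit in the Riemann sums via uniform convergence $h^{\pm}_{\eta,N}\to h^{\pm}_\eta$, and finally lets $\eta\downarrow 0$.

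Your argument is shorter. It needs neither the auxiliary parameter $\eta$ nor a separate treatment of the low-index terms, because the null set $\{0\}$ and the uniform domination absorb both.

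What the paper's explicit sandwich buys is robustness. The same lower-bound template is reused in Lemma~\ref{lem:normalized-mass-limit} in the high-density regime. There $\kappa_N\to0$ and the relevant quantity is the ratio to $g(\kappa_N)\to 0$. In that setting $f_N(s)/g(\kappa_N)\to 0$ for every fixed $s>0$, while its integral stays of order one, since the mass escapes toward $s=0$ on a logarithmic scale. So plain dominated convergence on $[0,1]$ would fail there. The $\varepsilon$-perturbed comparison function $\phi_{\varepsilon,N}$, with its explicit $J_0/N$ error, handles that case directly.
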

 \begin{proof}
 By the mass identity~\eqref{eq:rhoN-dirichlet} and~\eqref{eq:lambdaj}, we derive
\[
 \frac{1}N\int_{\R^2}\rho_{N,R_N}\,dx
 =\frac{1}N\sum_{j=1}^N\frac{\kappa_N}{\mu_j/N+\kappa_N}.
\]
 Fix $\eta\in(0,4)$.  By Weyl's law, there exists $J_0=J_0(\eta)$ such that
$(4-\eta)j\le\mu_j\le(4+\eta)j$ for every $j\ge J_0$.
 For $N\ge J_0$, the initial sum satisfies
\[
 0\le\frac{1}N\sum_{j=1}^{J_0-1}\lambda_j(R_N)^{-1}
 \le\frac{J_0-1}{N} \to 0 \quad \text{as } N \to \infty.
\]
 Define, for $t\in[0,1]$,
\[
 h_{\eta,N}^{\pm}(t):=\frac{\kappa_N}{(4\pm\eta)t+\kappa_N},
 \qquad
 h_\eta^{\pm}(t):=\frac{\kappa}{(4\pm\eta)t+\kappa}.
\]
 For $J_0\le j\le N$,
$h_{\eta,N}^{+}(j/N)\le\lambda_j(R_N)^{-1}
\le h_{\eta,N}^{-}(j/N)$.
Since  $\kappa_N\ge\kappa/2$ for all sufficiently large $N$, we have
\[
 \sup_{t\in[0,1]}|h_{\eta,N}^{\pm}(t)-h_\eta^{\pm}(t)|
 \le\frac{2(4\pm\eta)}{\kappa^2}|\kappa_N-\kappa|
 \to 0 \quad \text{as } N \to \infty.
\]
Therefore, the Riemann sums converge to the integrals of
$h_\eta^{\pm}$, and removing their first $J_0-1$ terms changes them
by at most $(J_0-1)/N$. Consequently,
\[
 \int_0^1\frac{\kappa\,dt}{(4+\eta)t+\kappa}
 \le\liminf_{N\to\infty}\frac{1}N\sum_{j=1}^N\lambda_j(R_N)^{-1}
 \le\limsup_{N\to\infty}\frac{1}N\sum_{j=1}^N\lambda_j(R_N)^{-1}
 \le\int_0^1\frac{\kappa\,dt}{(4-\eta)t+\kappa}.
\]
By letting $\eta \downarrow 0$, we derive
$\int_0^1\kappa(4t+\kappa)^{-1}\,dt=g(\kappa)$ and hence obtain the claim.
\end{proof}

Define 
\begin{equation}\label{eq:rho-infty-def}
 \rho_\infty(\kappa):=\frac{1}{4\pi}\ln\left(1+\frac{4}{\kappa}\right), \quad \kappa>0.
\end{equation}
Then we have the relation $g(\kappa)=\pi\kappa\rho_\infty(\kappa)$.

\subsection{Obstruction to mass-only bounds}

\begin{proposition}\label{prop:no-rank-uniform-mass}
There is no universal constant $C>0$ such that for every
$H^1$-orthonormal family $\{u_n\}_{n=1}^N$ ($N\ge1$) with density
$\rho$ there holds
\begin{equation}\label{eq:false-uniform-mass}
 \int_{\R^2}\Theta_{4\pi}(\rho)\,dx
 \le C\int_{\R^2}\rho\,dx.
\end{equation}
\end{proposition}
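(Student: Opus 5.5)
We argue by contradiction, using the Dirichlet families $\{u_j^{(R_N)}\}_{j=1}^N$ in the intermediate-density regime together with Jensen's inequality for the convex function $\Theta_{4\pi}$. Suppose \eqref{eq:false-uniform-mass} held with some universal $C$. Fix $\kappa>0$, set $\kappa_N\equiv\kappa$ and $R_N=\sqrt{\kappa N}$. By Lemma~\ref{lem:dirichlet-family}, $\rho_{N,R_N}$ is the density of an $H^1$-orthonormal family and is supported in $\overline{B_{R_N}}$. By Lemma~\ref{lem:density-family}, its mass is $M_N:=\int\rho_{N,R_N}=N g(\kappa)(1+o(1))$.

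\textbf{Jensen step.} $\Theta_{4\pi}$ is convex on $[0,\infty)$, since its power series $\Theta_{4\pi}(s)=\sum_{k\ge2}(4\pi)^k s^{k-1}/k!$ has nonnegative coefficients, and $\Theta_{4\pi}(0)=0$. Jensen's inequality on $B_{R_N}$ with normalized Lebesgue measure gives
\[
\int_{\R^2}\Theta_{4\pi}(\rho_{N,R_N})\,dx\ \ge\ |B_{R_N}|\,\Theta_{4\pi}\!\Big(\frac{M_N}{|B_{R_N}|}\Big).
\]
Here $|B_{R_N}|=\pi\kappa N$, so the average density is $M_N/(\pi\kappa N)\to g(\kappa)/(\pi\kappa)=\rho_\infty(\kappa)=\frac1{4\pi}\ln(1+4/\kappa)$.

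\textbf{Comparison.} Dividing both sides of \eqref{eq:false-uniform-mass} by $N$ and letting $N\to\infty$, continuity of $\Theta_{4\pi}$ yields
\[
\pi\kappa\,\Theta_{4\pi}(\rho_\infty(\kappa))\ \le\ C\,g(\kappa)=C\pi\kappa\rho_\infty(\kappa),
\qquad\text{i.e.}\qquad \frac{\Theta_{4\pi}(\rho_\infty(\kappa))}{\rho_\infty(\kappa)}\le C .
\]
Now let $\kappa\downarrow0$, so that $s:=\rho_\infty(\kappa)\to\infty$. Then $\Theta_{4\pi}(s)/s=(e^{4\pi s}-1-4\pi s)/s^2\to\infty$. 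Explicitly, $e^{4\pi s}=1+4/\kappa$, so the ratio behaves like $(4/\kappa)/(\frac1{4\pi}\ln(4/\kappa))^2\to\infty$. This contradicts the uniform bound by $C$, which proves the proposition.

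\textbf{Main obstacle.} The only delicate point is justifying Jensen's inequality and the passage to the limit. Jensen needs $\rho_{N,R_N}$ supported in $\overline{B_{R_N}}$, which Lemma~\ref{lem:dirichlet-family} provides, and the limit of the averaged density, which Lemma~\ref{lem:density-family} provides. No pointwise density asymptotics are needed. Moreover, since every fixed $\kappa$ already gives a finite lower bound on $C$, it suffices to choose a single $\kappa$ small enough that $\Theta_{4\pi}(\rho_\infty(\kappa))/\rho_\infty(\kappa)>C$. One could alternatively take $\kappa_N\to0$ slowly along a diagonal sequence, but the fixed-$\kappa$ argument avoids that.
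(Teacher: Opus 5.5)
Your proof is correct and follows essentially the same route as the paper: the Dirichlet ball family at intermediate density $\kappa$, Jensen's inequality for the convex $\Theta_{4\pi}$ on $B_{R_N}$ combined with the mass limit of Lemma~\ref{lem:density-family}, and then $\kappa\downarrow0$, which makes $\Theta_{4\pi}(\rho_\infty(\kappa))/\rho_\infty(\kappa)=16\pi^2(4-\kappa\beta)/(\kappa\beta^2)$ blow up. The only cosmetic difference is that you fix $\kappa_N\equiv\kappa$, whereas the paper allows a general sequence $\kappa_N\to\kappa$.
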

\begin{proof}
Suppose for a contradiction that such a constant exists. Fix $\kappa>0$ and let $\{\kappa_N\}_{N \geq 1}$ be any sequence of positive numbers converging to $\kappa$. Set
$R_N=\sqrt{\kappa_N N}$ and $\beta=\ln(1+4/\kappa)$.
By Lemma~\ref{lem:density-family}, we have
\[
 \lim_{N \to \infty}\frac{1}{|B_{R_N}|}\int_{B_{R_N}}\rho_{N,R_N}\,dx
 =\rho_\infty(\kappa)=\frac{\beta}{4\pi}.
\]
By Jensen's inequality, we obtain
\begin{align*}
 C \ge\frac{\int_{\R^2}\Theta_{4\pi}(\rho_{N,R_N})\,dx}
               {\int_{\R^2}\rho_{N,R_N}\,dx} \ge\frac{\displaystyle\Theta_{4\pi}\!\left(
       \frac{1}{|B_{R_N}|}\int_{B_{R_N}}\rho_{N,R_N}\,dx\right)}
             {\displaystyle\frac{1}{|B_{R_N}|}
                \int_{B_{R_N}}\rho_{N,R_N}\,dx}.
\end{align*}
Letting $N\to\infty$ yields
$$
 C\ge\frac{\Theta_{4\pi}(\rho_\infty(\kappa))}{\rho_\infty(\kappa)}
 =\frac{16\pi^2(4-\kappa\beta)}{\kappa\beta^2}.
$$
Hence letting $\kappa\downarrow0$ implies a contradiction because the right-hand side tends to infinity. Thus we obtain the claim.
\end{proof}

 \begin{remark}
The same argument rules out a rank-uniform mass-only bound for
$\int_{\R^2}(e^{4\pi(1-\varepsilon)\rho}-1)\,dx$ for every fixed
$0<\varepsilon<1$. In particular, the  right-hand side of
Corollary~\ref{cor:ruf-subcritical} cannot be replaced by
$C_\varepsilon\int_{\R^2}\rho\,dx$.
Indeed, define the convex function
$\Xi_\varepsilon(s):=e^{4\pi(1-\varepsilon)s}-1$ for $s\ge0$.
With the notation of the preceding proof, Jensen's inequality yields
\begin{equation}\label{eq:subcritical-mass-obstruction}
 \liminf_{N\to\infty}
 \frac{\int_{\R^2}\Xi_\varepsilon(\rho_{N,R_N})\,dx}{\int_{\R^2}\rho_{N,R_N}\,dx}
 \ge\frac{\Xi_\varepsilon(\rho_\infty(\kappa))}{\rho_\infty(\kappa)}
 =\frac{4\pi(e^{(1-\varepsilon)\beta}-1)}{\beta}.
\end{equation}
For every fixed $0<\varepsilon<1$, the last expression diverges as
$\kappa\downarrow0$, since it is asymptotic to
$4\pi(4/\kappa)^{1-\varepsilon}/\beta$.
\end{remark}

\subsection{Density bounds}
We will apply Proposition \ref{prop:hormander-local-weyl} with $\Omega=B_1$ and $K=\overline{B_{1-\delta}}$ for fixed
$\delta\in(0,1)$. We will also need the following classical estimate, which can be found in, e.g., \cite[eq.~(1.9.1), p.~52]{Davies89}.

\begin{lemma}
\label{lem:dirichlet-monotone}
Let $\Omega\subset\R^2$ be bounded and open, let
$k_{-\Delta}^\Omega(t,x,y)$ be the heat kernel of the Dirichlet
Laplacian in $\Omega$, and let
$k_{-\Delta}(t,x,y)=(4\pi t)^{-1}e^{-|x-y|^2/(4t)}$ be the heat kernel
on $\R^2$. Then, 
$k_{-\Delta}^{\Omega}(t,x,x)\ \le\ \frac{1}{4\pi t}$ 
for a.e. $x\in\Omega$ and $t>0$.
\end{lemma}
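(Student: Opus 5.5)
The plan is to prove the pointwise bound $k_{-\Delta}^\Omega(t,x,x)\le(4\pi t)^{-1}$ by domination of the Dirichlet heat kernel by the free heat kernel. The key fact is pointwise monotonicity: $0\le k_{-\Delta}^\Omega(t,x,y)\le k_{-\Delta}(t,x,y)$ for all $t>0$ and a.e. $x,y\in\Omega$. Evaluating on the diagonal (or, to avoid trace issues, via an averaging argument) then yields the claim.

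For monotonicity I will use the semigroup formulation. Let $f\in L^2(\Omega)$ with $f\ge0$, and let $\tilde f$ be its extension by zero to $\R^2$. Set $v(t)=e^{t\Delta_\Omega}f$ and $w(t)=(e^{t\Delta}\tilde f)|_\Omega$. First, positivity: $v\ge0$, which follows from the Beurling--Deny criterion, since $|u|\in H_0^1(\Omega)$ with $\|\nabla|u|\|\le\|\nabla u\|$ whenever $u\in H_0^1(\Omega)$. Second, comparison: the difference $z=v-w$ solves the heat equation in $\Omega$ with $z(0)=0$ and with boundary values $\le0$ in the weak sense, because $w\ge0$ while $v\in H_0^1$. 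So $z^+(t)\in H_0^1(\Omega)$. The standard energy argument then gives
$\tfrac{d}{dt}\tfrac12\|z^+\|^2=-\|\nabla z^+\|^2\le0$, hence $z^+\equiv0$. Alternatively I can cite the Trotter/Kato monotonicity of semigroups associated with forms: the Dirichlet form on $H_0^1(\Omega)$ is the restriction of the free form to an ideal, and such a restriction gives a dominated semigroup. In either version the conclusion is
$$\int_\Omega k^\Omega_{-\Delta}(t,x,y)f(y)\,dy\le\int_\Omega k_{-\Delta}(t,x,y)f(y)\,dy$$
for a.e. $x$ and all $f\ge0$, which is the kernel inequality a.e.

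To pass to the diagonal, I will use the semigroup property and symmetry:
$$k^\Omega(t,x,x)=\int_\Omega k^\Omega(t/2,x,y)^2\,dy\le\int_{\R^2} k(t/2,x,y)^2\,dy=k(t,x,x)=(4\pi t)^{-1}.$$
Since $\Omega$ is bounded, the Dirichlet kernel has the eigenfunction expansion $\sum e^{-t\mu_j}|\psi_j(x)|^2$, which converges and is continuous. So the identity $k^\Omega(t,x,x)=\|k^\Omega(t/2,x,\cdot)\|_2^2$ holds for a.e. $x$, and the inequality follows from $0\le k^\Omega\le k$ pointwise a.e. in $y$.

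I expect the main obstacle to be the rigorous justification of the comparison step: specifically, that $z^+(t)\in H_0^1(\Omega)$ when $\Omega$ is an arbitrary bounded open set, since only the Dirichlet side vanishes on the boundary. If this becomes delicate, I would fall back on the abstract domination criterion (Ouhabaz), or simply cite Davies as the paper does.
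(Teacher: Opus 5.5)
Your proposal is correct and follows essentially the paper's route: the paper simply cites the classical domination $0\le k^\Omega_{-\Delta}\le k_{-\Delta}$ from Davies, and you supply a proof of that domination, then handle the diagonal cleanly via $k^\Omega_{-\Delta}(t,x,x)=\int_\Omega k^\Omega_{-\Delta}(t/2,x,y)^2\,dy$, i.e.\ Parseval in the Dirichlet eigenbasis. The step you flagged is not an obstacle for an arbitrary bounded open $\Omega$: $z^+\in H^1(\Omega)$ and $0\le z^+\le v^+$ with $v^+\in H_0^1(\Omega)$, so the ideal property of $H_0^1(\Omega)$ already gives $z^+\in H_0^1(\Omega)$.
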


\begin{lemma}[Global density bound]
\label{lem:uniform-rhoN}
There is a positive constant $\tilde C$, depending on the sequence $\{\kappa_N\}_{N \geq 1}$ but not on $N$ or the choice of eigenfunctions, such that
$\rho_{N,R_N}(x)\le \tilde C$ for every $N\ge1$ and every $x\in\R^2$.
\end{lemma}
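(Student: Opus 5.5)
The plan is to bound $\rho_{N,R_N}$ by a heat kernel of the rescaled Dirichlet operator. By \eqref{u_j^(R)}, for $x\in B_{R_N}$ and $y=x/R_N$,
\[
\rho_{N,R_N}(x)=R_N^{-2}\sum_{j=1}^N\frac{|\psi_j(y)|^2}{1+\mu_j/R_N^2}=\sum_{j=1}^N\frac{|\psi_j(y)|^2}{R_N^2+\mu_j}.
\]
Outside $B_{R_N}$ the density vanishes, so it suffices to bound this sum uniformly in $y\in B_1$. The obstacle is that a pointwise bound on individual eigenfunctions is unavailable and the number of terms grows. Instead, I will dominate the truncated sum by a full spectral sum, which is basis independent and controlled by Lemma~\ref{lem:dirichlet-monotone}.

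\textbf{Step 1: replace the resolvent weights by a heat weight.} For $j\le N$, Weyl's law \eqref{eq:weyl-ball} gives $\mu_j\le\mu_N\le C_1N$ for all $N$, with $C_1$ universal. Since $\kappa_N\to\kappa>0$, we also have $\kappa_N\ge c_0>0$ for all $N$, so $R_N^2=\kappa_NN\ge c_0N$. Set $t_N:=1/R_N^2$. For $j\le N$,
\[
t_N\mu_j\le C_1N/(c_0N)=C_1/c_0=:M.
\]
Using $e^{-s}\ge e^{-M}$ for $s\le M$, together with $1/(R_N^2+\mu_j)\le R_N^{-2}=t_N$, we get
\[
\frac{1}{R_N^2+\mu_j}\le t_N\le e^{M}\,t_N\,e^{-t_N\mu_j}\qquad(1\le j\le N).
\]

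\textbf{Step 2: heat kernel bound.} Extending to all $j$ adds only nonnegative terms, so
\[
\rho_{N,R_N}(x)\le e^{M}t_N\sum_{j\ge1}e^{-t_N\mu_j}|\psi_j(y)|^2=e^{M}t_N\,k^{B_1}_{-\Delta}(t_N,y,y).
\]
The eigenfunction expansion of the Dirichlet heat kernel on $B_1$ converges and is continuous, so this holds for every $y$. Lemma~\ref{lem:dirichlet-monotone} gives $k^{B_1}_{-\Delta}(t_N,y,y)\le 1/(4\pi t_N)$, first almost everywhere and then everywhere by continuity. Hence
\[
\rho_{N,R_N}(x)\le \tilde C:=\frac{e^{M}}{4\pi}.
\]
The constant depends only on $c_0$ and the Weyl constant $C_1$, hence only on the sequence $\{\kappa_N\}$. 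It is independent of the choice of eigenfunctions, because the final bound involves the full basis-independent heat kernel.

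\textbf{Expected difficulty.} The only delicate point is the uniform comparison $\mu_N\le C_1 N$ for all $N$, not just asymptotically. This follows from $\mu_j/j\to4$ together with finitely many initial terms. Another route would be a Hörmander-type local estimate, but that degenerates near the boundary, so the heat-kernel domination is the preferred approach.
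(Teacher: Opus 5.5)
Your proof is correct, but it takes a somewhat different route from the paper's.

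The paper stays inside its abstract framework. It applies the Bessel bound of Lemma~\ref{lem:bessel} to $\cA_{R_N}=-\Delta+1$ on $B_{R_N}$. This dominates the weighted sum $\sum_{j\le N}e^{-2t\lambda_j(R_N)}|u_j^{(R_N)}(x)|^2$ by the resolvent--heat diagonal $Q_t^{R_N}(x)=\int_{2t}^\infty e^{-s}k^{B_{R_N}}_{-\Delta}(s,x,x)\,ds\le\frac{1}{4\pi}\ln(1+\frac{1}{2t})$. Taking $t=1/2$ then gives $\rho_{N,R_N}\le\frac{\ln 2}{4\pi}e^{\lambda_N(R_N)}$.

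You instead discard the resolvent weights via $(R_N^2+\mu_j)^{-1}\le R_N^{-2}$. You then dominate the unweighted projector $S_{0,N}$ by the Dirichlet heat kernel of $B_1$ at the single time $R_N^{-2}$. This is the same mechanism as the paper's first estimate in Lemma~\ref{lem:dilute-projector}, where the time is $\mu_N^{-1}$. It needs neither Lemma~\ref{lem:bessel} nor the $\Ei$ bound, and it gives a slightly better constant, $e^{\mu_N/R_N^2}/(4\pi)$. Both arguments rest on the same key input, Lemma~\ref{lem:dirichlet-monotone}, and both suffice when $\kappa_N\to\kappa>0$.

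What the paper's version buys is that it keeps the weights $\lambda_j(R_N)^{-1}$. Optimizing its bound over $t$ yields $\rho_{N,R_N}\le\Phi(\lambda_N(R_N))/(4\pi)\approx\beta_N/(4\pi)$, which is the sharp high-density bound of Lemma~\ref{lem:uniform-upper}. Once the weights are dropped, the quantity $R_N^{-2}S_{0,N}$ you are bounding is itself of size about $1/(\pi\kappa_N)$ in the bulk. So no choice of time in your approach can recover the logarithmic size $\beta_N/(4\pi)$ as $\kappa_N\to0$.
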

\begin{proof}
Applying Lemma~\ref{lem:bessel} to the first $N$ normalized eigenfunctions of the Dirichlet operator $\cA_{R_N}=-\Delta+1$ on $B_{R_N}$, we have
for every $t>0$ and a.e. $x\in B_{R_N}$, 
\[
\sum_{j=1}^N e^{-2t \lambda_j(R_N)} \big|u_j^{(R_N)}(x)\big|^2
\ \le\
Q_t^{R_N}(x):=(\cA_{R_N}^{-1}e^{-2t \cA_{R_N}})(x,x)
 =\int_{2t}^\infty e^{-s}k_{-\Delta}^{B_{R_N}}(s,x,x)\,ds.
\]
From Lemma~\ref{lem:dirichlet-monotone} with $\Omega=B_{R_N}$ and
Lemma~\ref{lem:E1-bound}, we infer that, for $t>0$ and a.e.\ $x\in B_{R_N}$,
\begin{equation} \label{eq:Q-dirichlet}
Q_t^{R_N}(x)
\ \le\
\int_{2t}^\infty\frac{e^{-s}}{4\pi s}\,ds
=
\frac{1}{4\pi}\Ei(2t)
\ \le\
\frac{1}{4\pi}\ln\Big(1+\frac{1}{2t}\Big).
\end{equation}
Since
$u_j^{(R_N)}$ is an eigenfunction of $\cA_{R_N}$ with eigenvalue
$\lambda_j(R_N)$, and $\lambda_j(R_N)\le\lambda_N(R_N)$ for $j\le N$, we deduce
$e^{-2t\lambda_N(R_N)}\rho_{N,R_N}(x)\le Q_t^{R_N}(x)$. Taking
$t=1/2$ and using \eqref{eq:Q-dirichlet} yield
$\rho_{N,R_N}(x)\ \le\ e^{\lambda_N(R_N)} \, \frac{\ln 2}{4\pi}$
for a.e. $x\in\R^2$.
Because $\rho_{N,R_N}$ is continuous by
Lemma~\ref{lem:dirichlet-family}, the same inequality holds for every
$x\in\R^2$.
By Weyl's law~\eqref{eq:weyl-ball},
$\lambda_N(R_N)=1+\mu_N/R_N^2\to1+4/\kappa$ as $N\to\infty$
with $R_N=\sqrt{\kappa_N N}$. Hence the sequence $\{\lambda_N(R_N)\}_{N\ge1}$ is bounded. Put $\tilde C:=\frac{\ln 2}{4\pi}\sup_{N\ge1}e^{\lambda_N(R_N)}<\infty$. This constant may depend on the entire sequence $\{\kappa_N\}_{N \geq 1}$, including the initial terms. Thus we obtain the desired bound.
\end{proof}

\begin{lemma}[Spectral truncation]
\label{lem:dilute-projector}
There is a positive constant $C_1$, independent of $N$ and of the
choice of eigenfunctions within each eigenspace, such that for any $N \geq 1$,
\begin{equation}\label{eq:dilute-projector-global}
 \frac{S_{0,N}(y)}N\le\frac{e\, \mu_N}{4\pi N}\le C_1
 \quad \text{for any } y \in \overline{B_1}.
\end{equation}
For every fixed $\delta\in(0,1)$, there is a sequence
$\{\varepsilon_{N,\delta}\}_{N\ge1}$ converging to $0$ as $N \to \infty$ 
such that
\begin{equation}\label{eq:dilute-projector-bulk}
 \sup_{y\in\overline{B_{1-\delta}}}
 \left|\frac{S_{0,N}(y)}N-\frac{1}{\pi}\right|
 \le\varepsilon_{N,\delta}.
\end{equation}
\end{lemma}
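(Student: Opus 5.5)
For the global bound \eqref{eq:dilute-projector-global} I will use a heat-kernel trick: bound the spectral sum by a diagonal heat kernel and then apply Lemma~\ref{lem:dirichlet-monotone}. Let $k^{B_1}_{-\Delta}$ denote the Dirichlet heat kernel of $B_1$. For every $t>0$ and $y\in B_1$,
\[
 S_{0,N}(y)=\sum_{j=1}^N|\psi_j(y)|^2\le e^{t\mu_N}\sum_{j=1}^N e^{-t\mu_j}|\psi_j(y)|^2
 \le e^{t\mu_N}\,k^{B_1}_{-\Delta}(t,y,y)\le \frac{e^{t\mu_N}}{4\pi t}.
\]
Here the first inequality uses $\mu_j\le\mu_N$ for $j\le N$. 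The second uses the eigenfunction expansion of the heat kernel, which has nonnegative diagonal terms.

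The choice $t=1/\mu_N$ (note $\mu_N\ge\mu_1>0$) gives $S_{0,N}(y)\le e\mu_N/(4\pi)$. This holds for a.e. $y$, and hence for every $y\in\overline{B_1}$, since $S_{0,N}$ is continuous and vanishes on $\partial B_1$. Weyl's law \eqref{eq:weyl-ball} shows that $\mu_N/N\to4$, so $\sup_N \mu_N/N<\infty$, and we can take $C_1:=\sup_N e\mu_N/(4\pi N)$. Neither bound depends on the choice of basis within eigenspaces.

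For the interior estimate \eqref{eq:dilute-projector-bulk}, the difficulty is that $S_{0,N}$ may contain only part of the eigenspace of $\mu_N$. I will therefore sandwich it between two full spectral projectors, using the notation of Lemma~\ref{lem:sublinear-cluster}:
\[
 N_y(\mu_N^{<})\le S_{0,N}(y)\le N_y(\mu_N).
\]
These inequalities hold because the indices $j\le N-k_N$ are exactly those with $\mu_j\le\mu_N^<$, and all indices $j\le N$ satisfy $\mu_j\le\mu_N$. Both bounds are independent of the basis.

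Next I apply Proposition~\ref{prop:hormander-local-weyl} with $\Omega=B_1$ and $K=\overline{B_{1-\delta}}$. For $\Lambda\ge1$ this gives $|N_y(\Lambda)-\Lambda/(4\pi)|\le C_\delta\sqrt\Lambda$ uniformly for $y\in K$. Applying it with $\Lambda=\mu_N$ and with $\Lambda=\mu_N^<$ (both at least $1$ for large $N$, since $\mu_N^<\sim4N$ by the proof of Lemma~\ref{lem:sublinear-cluster}), I obtain
\[
 \sup_{y\in K}\Big|\frac{S_{0,N}(y)}{N}-\frac1\pi\Big|
 \le \Big|\frac{\mu_N}{4\pi N}-\frac1\pi\Big|+\frac{\mu_N-\mu_N^<}{4\pi N}+\frac{C_\delta\sqrt{\mu_N}}{N}.
\]
Each term on the right tends to $0$. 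The first does so by Weyl's law, the second by \eqref{est:gap}, and the third because $\mu_N=O(N)$. For the finitely many small $N$, I define $\varepsilon_{N,\delta}$ as the supremum on the left, which is finite by the global bound \eqref{eq:dilute-projector-global}.

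The main obstacle is this partial-cluster issue. It is resolved by the sandwich together with Lemma~\ref{lem:sublinear-cluster}.
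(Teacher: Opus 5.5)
Your proof is correct and follows essentially the same route as the paper. For the global estimate you use the heat-kernel bound $S_{0,N}(y)\le e^{t\mu_N}k_{-\Delta}^{B_1}(t,y,y)\le e^{t\mu_N}/(4\pi t)$ with $t=\mu_N^{-1}$, followed by continuity; for the interior estimate you use the sandwich $N_y(\mu_N^{<})\le S_{0,N}(y)\le N_y(\mu_N)$ together with H\"ormander's local Weyl law and Lemma~\ref{lem:sublinear-cluster}, just as the paper does (the paper writes the error as $C_\delta(1+\sqrt{\mu_N})/N$ to cover all $N$ at once, whereas you treat the finitely many small $N$ separately, which is equally valid).
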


\begin{proof}
For $t>0$, by the spectral expansion of the Dirichlet heat kernel and
Lemma~\ref{lem:dirichlet-monotone}, we obtain
\[
S_{0,N}(y)
 \leq  e^{t\mu_N}\sum_{j=1}^{\infty}e^{-t\mu_j}
       |\psi_j(y)|^2
 =  e^{t\mu_N} k_{-\Delta}^{B_1}(t,y,y)\le\frac{ e^{t\mu_N}}{4\pi t} \quad \text{for a.e. } y \in B_1. 
\]
Taking $t=\mu_N^{-1}$ and using continuity
of $S_{0,N}$, with its zero boundary values, imply the first bound
in~\eqref{eq:dilute-projector-global} everywhere on $\overline{B_1}$.
The second estimate in ~\eqref{eq:dilute-projector-global} follows from~\eqref{eq:weyl-ball} since
$\sup_N\mu_N/N<\infty$.

Let $N_y(\Lambda)$, $\Lambda>0$, be defined in \eqref{def:N_y(mu)}, and let $\mu_N^{<}$ and $k_N$ be defined in Lemma~\ref{lem:sublinear-cluster}. 
Then we have
\begin{equation} \label{est:S_0}
 S_{0,N}(y)=N_y(\mu_N^{<})+S_{N-k_N,N}(y),\qquad
 0\le S_{N-k_N,N}(y)\le N_y(\mu_N)-N_y(\mu_N^{<}).
\end{equation}
Here $S_{N-k_N,N}$ is the one-body density of the selected part of the eigenspace associated with $\mu_N$.
The last inequality in \eqref{est:S_0} holds for every orthonormal choice in that eigenspace, by completing it to an orthonormal basis. In particular, from \eqref{est:S_0}, we have
\begin{equation} \label{est1:S0} 
N_y(\mu_N^{<})\le S_{0,N}(y)\le N_y(\mu_N), \quad \forall y \in \overline{B_1}.
\end{equation}
Fix $\delta\in(0,1)$. This, together with  Proposition~\ref{prop:hormander-local-weyl} with $\Omega=B_1$ and $K=\overline{B_{1-\delta}}$ and~\eqref{est1:S0}, implies
\[
\left( \frac{\mu_N^{<}}{4\pi N} - \frac{1}{\pi} \right) - \frac{C_\delta(1 + \sqrt{\mu_N})}{N} \le \frac{S_{0,N}(y)}{N} - \frac{1}{\pi} \le \left( \frac{\mu_N}{4\pi N} - \frac{1}{\pi} \right) + \frac{C_\delta(1 + \sqrt{\mu_N})}{N},
\]
for some positive constant $C_\delta$ independent of $N$. Therefore,
\begin{equation} \label{est:S/N-pi}
\left | \frac{S_{0,N}(y)}{N} - \frac{1}{\pi} \right | \le \left | \frac{\mu_N}{4\pi N} - \frac{1}{\pi} \right | + \frac{\mu_N - \mu_N^{<}}{4\pi N} + \frac{C_\delta(1 + \sqrt{\mu_N})}{N} =: \varepsilon_{N,\delta}, \quad \forall y \in \overline{B_{1-\delta}},
\end{equation}
which implies \eqref{eq:dilute-projector-bulk}. 
Since  $\mu_N=4N(1+ o(1)_{N \to \infty})$ (due to ~\eqref{eq:weyl-ball}) and 
$\mu_N-\mu_N^{<}=o(N)_{N \to \infty}$ (due to
Lemma~\ref{lem:sublinear-cluster}), we derive that $\varepsilon_{N,\delta} \to 0$ as $N \to \infty$. The proof is complete.
\end{proof}

Next we will prove uniform convergence to $\rho_\infty(\kappa)$ on each fixed relative interior ball, where $\rho_\infty$ is defined in \eqref{eq:rho-infty-def}.
Together with the global density bound, this will give the limit of
$N^{-1}\int_{\R^2}\Theta_{4\pi}(\rho_{N,R_N})\,dx$.

\begin{lemma}[Uniform interior limit]
\label{lem:pointwise-bulk}
 For any $\delta\in(0,1)$, we have
 \begin{equation} \label{eq:bulk-limit}
\lim_{N\to\infty}\ \sup_{x\in B_{(1-\delta)R_N}}\big|\rho_{N,R_N}(x)-\rho_\infty(\kappa)\big|\ =\ 0.
\end{equation}
\end{lemma}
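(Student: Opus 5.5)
We write $\rho_{N,R_N}(x)$ via the scaled eigenfunctions. With $y=x/R_N\in B_{1-\delta}$,
\[
\rho_{N,R_N}(x)=\frac{1}{R_N^2}\sum_{j=1}^N\frac{|\psi_j(y)|^2}{1+\mu_j/R_N^2}
=\frac{1}{\kappa_N N}\sum_{j=1}^N\frac{|\psi_j(y)|^2}{1+\mu_j/(\kappa_N N)}.
\]
The plan is to treat this as a Stieltjes integral of $f_N(\mu):=(\kappa_N N)^{-1}(1+\mu/(\kappa_N N))^{-1}$ against the local counting function $N_y$ from \eqref{def:N_y(mu)}, and to compare $N_y$ with the Weyl profile $\Lambda/(4\pi)$ using Proposition~\ref{prop:hormander-local-weyl}.

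\textbf{Step 1: separate the partial cluster.} Using \eqref{est:S_0}, write $\rho_{N,R_N}(x)=\sum_{\mu_j\le\mu_N^<}f_N(\mu_j)|\psi_j(y)|^2+\sum_{j=N-k_N+1}^N f_N(\mu_j)|\psi_j(y)|^2$. In the second sum all $\mu_j=\mu_N$. That sum is therefore at most $f_N(\mu_N)\bigl(N_y(\mu_N)-N_y(\mu_N^<)\bigr)$, which Proposition~\ref{prop:hormander-local-weyl} bounds by $f_N(\mu_N)\bigl((\mu_N-\mu_N^<)/(4\pi)+2C_\delta\sqrt{\mu_N}\bigr)$. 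Since $f_N(\mu_N)=O(1/N)$, this is $o(1)$ uniformly in $y\in\overline{B_{1-\delta}}$ by Lemma~\ref{lem:sublinear-cluster}. This step removes the dependence on the choice within the top eigenspace.

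\textbf{Step 2: integrate the full spectral sum by parts.} Set $M=\mu_N^<$. The first sum equals $\int_{[0,M]}f_N\,dN_y$. Integration by parts gives
\[
f_N(M)N_y(M)-\int_0^M f_N'(\Lambda)N_y(\Lambda)\,d\Lambda,
\]
using $N_y(0)=0$. Write $N_y(\Lambda)=\Lambda/(4\pi)+E_y(\Lambda)$. For $\Lambda\ge1$, Proposition~\ref{prop:hormander-local-weyl} gives $|E_y(\Lambda)|\le C_\delta\sqrt\Lambda$. For $\Lambda\in[0,1]$, $N_y(\Lambda)\le N_y(1)$ is bounded on $\overline{B_{1-\delta}}$, so $|E_y|$ is bounded there as well. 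Since $|f_N'|\le(\kappa_N N)^{-2}$, the error terms are
\[
f_N(M)\,O(\sqrt M)+(\kappa_N N)^{-2}\int_0^M O(1+\sqrt\Lambda)\,d\Lambda=O(N^{-1/2}),
\]
because $M\le\mu_N=O(N)$. These bounds are uniform in $y$.

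\textbf{Step 3: identify the main term.} The main term is $\frac{1}{4\pi}\int_0^M f_N(\Lambda)\,d\Lambda=\frac{1}{4\pi}\ln\bigl(1+M/(\kappa_N N)\bigr)$. By Weyl's law \eqref{eq:weyl-ball} and Lemma~\ref{lem:sublinear-cluster}, $M/N\to4$. With $\kappa_N\to\kappa$, the main term therefore tends to $\frac{1}{4\pi}\ln(1+4/\kappa)=\rho_\infty(\kappa)$, and the limit is independent of $y$.

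Combining Steps 1–3 gives \eqref{eq:bulk-limit}. Pointwise identities hold everywhere, by continuity of $\psi_j$. The main obstacle is Step 2: keeping the error uniform requires the remainder in Proposition~\ref{prop:hormander-local-weyl} to be $O(\sqrt\Lambda)$, which is $o(N)$ after weighting by $|f_N'|$. It also requires care in the low range $\Lambda<1$, where the proposition does not apply directly. We handle that range with the crude bound $N_y(1)\le e\,k^{B_1}_{-\Delta}(1,y,y)\le e/(4\pi)$, which follows from Lemma~\ref{lem:dirichlet-monotone}.
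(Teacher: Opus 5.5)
Your proposal is correct and follows essentially the same route as the paper: you split off the partially selected top cluster and make it vanish using Hörmander's estimate at $\mu_N$ and $\mu_N^{<}$ together with Lemma~\ref{lem:sublinear-cluster}, then integrate the remaining full spectral sum by parts against $N_y$ to obtain the main term $\frac{1}{4\pi}\ln(1+\mu_N^{<}/R_N^2)$ with an $O(N^{-1/2})$ error uniform on $\overline{B_{1-\delta}}$. The differences are only cosmetic: you bound the error integral by $|f_N'|\le(\kappa_N N)^{-2}$ rather than the paper's exact change of variables, and you handle the range $\Lambda<1$ with a heat-kernel bound (in fact $N_y\equiv0$ there, since $\mu_1>1$).
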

 \begin{proof}
For  $x\in B_{(1-\delta)R_N}$, put  $y=x/R_N\in\overline{B_{1-\delta}}$.
Note that the function $N_y$ from~\eqref{def:N_y(mu)} is nondecreasing
and right-continuous on $[0,\infty)$, with $N_y(0)=0$.
Let  $\mu_N^{<}$ and $k_N$ be as in Lemma~\ref{lem:sublinear-cluster}.
\smallskip

\noindent\textbf{Step 1:} We will separate the top cluster. We note that the eigenvalues
 $\mu_1 \leq \cdots \leq \mu_N$ are listed in  nondecreasing order and exactly $k_N$ of them (necessarily the last $k_N$ indices $N-k_N+1,\dots,N$) equal  $\mu_N$, while $\mu_j<\mu_N$ for $j\le N-k_N$. Moreover, there is no eigenvalue in  $(\mu_N^{<},\mu_N)$.
Set 
$$
w_N(\mu):=(1+\mu/R_N^2)^{-1}.
$$ 
The sum $S_{N-k_N,N}$
from~\eqref{def:S_{N-k_N,N}(y)} is the density of the selected part of the top
eigenspace. The density decomposes as follows.
From \eqref{rho_NR}, \eqref{u_j^(R)} and \eqref{eq:lambdaj}, together with the definition of  $w_N$ and the fact that  $\{j:\mu_j\le\mu_N^{<}\}=\{1,\dots,N-k_N\}$, we infer
 \begin{equation}
\begin{aligned}
\rho_{N,R_N}(x) =R_N^{-2}\sum_{j=1}^N w_N(\mu_j)|\psi_j(y)|^2 =\underbrace{R_N^{-2}\int_{[0,\mu_N^{<}]}w_N\,dN_y(\mu)}_{=:I_N(y)}
+\underbrace{R_N^{-2}w_N(\mu_N)S_{N-k_N,N}(y)}_{=:\mathrm{II}_N(y)}.
\end{aligned}
\label{eq:split-cluster}
\end{equation}

\noindent\textbf{Step 2:} We will prove that the correction term
$\mathrm{II}_N(y)$ tends to zero uniformly in the interior. Since  $\psi_{N-k_N+1},\dots,\psi_N$ are orthonormal and belong to
 $E_{\mu_N}:=\ker(-\Delta_{B_1}-\mu_N)$, we can extend this orthonormal system to an orthonormal basis $\{e_1,\dots,e_d\}$ of  $E_{\mu_N}$. After relabeling so that the selected functions are $e_1,\dots,e_{k_N}$, by using the elementary  inequality
$\sum_{i=1}^{k_N}|e_i(y)|^2\le\sum_{i=1}^d|e_i(y)|^2$, we have
 \[
0\ \le\ S_{N-k_N,N}(y)\ \le\ N_y(\mu_N)-N_y(\mu_N^{<})
\quad\text{for every }y\in B_{1-\delta}.
\]
Note that  $\mu_N\to\infty$ by~\eqref{eq:weyl-ball} and
 $\mu_N^{<}\to\infty$ by Lemma~\ref{lem:sublinear-cluster}. Applying
Proposition~\ref{prop:hormander-local-weyl} at  $\mu_N$ and $\mu_N^{<}$, with the constant $C=C(\delta)$, gives
 \begin{align*}
N_y(\mu_N)-N_y(\mu_N^{<})
\leq \frac{\mu_N-\mu_N^{<}}{4\pi}
 +C\big(\sqrt{\mu_N}+\sqrt{\mu_N^{<}}\big)  \leq \frac{\mu_N-\mu_N^{<}}{4\pi}+2C\sqrt{\mu_N} = o(N)_{N\to\infty},
\end{align*}
uniformly for $y\in B_{1-\delta}$. The last equality above holds because  $\mu_N-\mu_N^{<}=o(N)_{N\to\infty}$ (by Lemma~\ref{lem:sublinear-cluster}) and  $\mu_N=O(N)_{N\to\infty}$ (by~\eqref{eq:weyl-ball}). Note that the term $o(N)_{N\to\infty}$ above is independent of $y$. Since  $0\le w_N\le1$, $R_N^2=\kappa_N N$ and $\kappa_N \geq \kappa/2$ for sufficiently large $N$, 
\begin{equation}
\begin{aligned}
0\le \mathrm{II}_N(y)
\le R_N^{-2}\big(N_y(\mu_N)-N_y(\mu_N^{<})\big)
=o(1)_{N\to\infty}
\quad\text{uniformly for }y\in B_{1-\delta}.
\end{aligned}
\label{eq:correction-vanishes}
\end{equation}

\noindent\textbf{Step 3:} We will prove an asymptotic formula for the leading term $I_N(y)$.
By integration by parts ($w_N$ is $C^1$ and decreasing, $N_y(0)=0$),
 \begin{equation}
\int_{[0,\mu_N^{<}]}w_N\,dN_y(\mu)
=
w_N(\mu_N^{<})N_y(\mu_N^{<})+R_N^{-2}\int_0^{\mu_N^{<}}\frac{N_y(\mu)}{(1+\mu/R_N^2)^2}\,d\mu.
\label{eq:ibp-bulk}
\end{equation}
By applying Proposition~\ref{prop:hormander-local-weyl} for $\overline{B_{1-\delta}}$, we have
$N_y(\mu)=\mu/(4\pi)+E_y(\mu)$ with $|E_y(\mu)|\le C(1+\sqrt\mu)$ for
all $\mu\ge0$ and $y\in\overline{B_{1-\delta}}$, where $C$ depends only
on $\delta$ (and absorbs the bound needed on $\mu\in[0,1]$, where
$N_y(\mu)$ is trivially bounded). Substituting into~\eqref{eq:ibp-bulk}
and multiplying by  $R_N^{-2}$ yield
\begin{equation}
\begin{aligned}
R_N^{-2}\int_{[0,\mu_N^{<}]}w_N\,dN_y(\mu) &= \left(R_N^{-2} w_N(\mu_N^{<})\frac{\mu_N^{<}}{4\pi}
+
R_N^{-4}\int_0^{\mu_N^{<}}\frac{\mu}{4\pi(1+\mu/R_N^2)^2}\,d\mu \right) \\
&\quad + \left( R_N^{-2} w_N(\mu_N^{<})E_y(\mu_N^{<})+R_N^{-4}\int_0^{\mu_N^{<}}\frac{E_y(\mu)}{(1+\mu/R_N^2)^2}\,d\mu \right) \\
&=: I_{N,1}(y) + I_{N,2}(y).
\end{aligned}
\end{equation}
Define the sequence $\{\ell_N\}_{N\ge1}$ by $\ell_N:=\mu_N^{<}/R_N^2$. By a change of variable, we obtain
\[
I_{N,1}(y)
 = \frac{1}{4\pi}\cdot\frac{\ell_N}{1+\ell_N}
 + \frac{1}{4\pi} \left( \ln(1+\ell_N)-\frac{\ell_N}{1+\ell_N} \right)
 = \frac{1}{4\pi} \ln(1+\ell_N).
\]
Next, using  $w_N(\mu_N^{<})\le1$,
$|E_y(\mu)|\le C(1+\sqrt\mu)$ and the change of variables
 $\mu=R_N^2t$, together with the identities $\int_0^\infty\frac{dt}{(1+t)^2}=1$ and $\int_0^\infty\frac{t^{1/2}}{(1+t)^2}\,dt=\frac{\pi}{2}$, we have
 \begin{equation} \label{est:Err}
|I_{N,2}(y)|
\leq CR_N^{-2}(1+\sqrt{\mu_N^{<}}) + CR_N^{-2}+\frac{C\pi}{2}R_N^{-1}
\leq C_{\kappa,\delta} N^{-\frac{1}{2}}.
\end{equation}
The last inequality holds for all sufficiently large $N$ since $R_N^2 = \kappa_N N \geq (\kappa N)/2$ and $\mu_N^{<} \leq \mu_N = O(N)_{N \to \infty}$. Here $C_{\kappa,\delta}$ is independent of
$y\in\overline{B_{1-\delta}}$. Hence
$\sup_{y\in\overline{B_{1-\delta}}}|I_{N,2}(y)|\to0$
as $N\to\infty$.

From the above expressions, we deduce that
\begin{equation} \label{est:I_N(y)} I_N(y)=\frac{1}{4\pi}\ln(1+\ell_N)+ I_{N,2}(y).
\end{equation}
By~\eqref{eq:weyl-ball},
 $ \frac{\mu_N}{R_N^2}
 =\frac{\mu_N}{\kappa_N N}\to  \frac{4}{\kappa}$ as $N \to \infty$. 
 Moreover, we infer from Lemma~\ref{lem:sublinear-cluster}, $R_N^2=\kappa_N N$ and $\kappa_N \to \kappa>0$ that
$(\mu_N-\mu_N^{<})/R_N^2=o(1)_{N \to \infty}$. Therefore
$\ell_N=\frac{\mu_N^{<}}{R_N^2}
 =\frac{\mu_N}{R_N^2}-\frac{\mu_N-\mu_N^{<}}{R_N^2}
 \to  \frac{4}{\kappa}$.
Consequently,
\begin{equation} \label{eq:ell-N-limit}
\frac{1}{4\pi}\ln(1+\ell_N) \to \frac{1}{4\pi} \ln(1+\frac{4}{\kappa})=\rho_\infty(\kappa) \quad \text{as } N \to \infty.
\end{equation}
Note that this limit does not depend on $y$. \medskip

\noindent\textbf{Step 4:} We will prove the claimed uniform convergence. Combining~\eqref{eq:split-cluster},
\eqref{est:I_N(y)}, \eqref{est:Err}, \eqref{eq:ell-N-limit} and the
bound~\eqref{eq:correction-vanishes} on $\mathrm{II}_N(y)$ yields
 \begin{align*}
&\sup_{x \in B_{(1-\delta)R_N}} \big|\rho_{N,R_N}(x)-\rho_\infty(\kappa)\big|
=
\sup_{y\in B_{1-\delta}}\big|\rho_{N,R_N}(R_Ny)-\rho_\infty(\kappa)\big|\\
&\ \le\
\Big|\frac{1}{4\pi}\ln(1+\ell_N)-\rho_\infty(\kappa)\Big|
+\sup_{y\in B_{1-\delta}}|I_{N,2}(y)|
+\sup_{y\in B_{1-\delta}}\mathrm{II}_N(y) \to 0 \quad \text{as } N \to \infty,
\end{align*}
which implies \eqref{eq:bulk-limit}. The proof is complete.
\end{proof}

\subsection{Energy-per-state limit}
 \label{subsec:fixed-energy}
Define
 \begin{equation} \label{compute:H}
 H(\kappa)
:=\frac{4\pi^2\bigl(4-\kappa\ln(1+4/\kappa)\bigr)}{\ln(1+4/\kappa)}, \quad \kappa>0.
\end{equation}
 Then with $\rho_\infty$ as in \eqref{eq:rho-infty-def}, we have
 \begin{equation} \label{def:H}
H(\kappa)=\pi \kappa \,\Theta_{4\pi}(\rho_\infty(\kappa)), \quad \kappa>0.
\end{equation}

 \begin{lemma}[Lower bound]
\label{lem:jensen-lower}
We have
\begin{equation} \label{eq:jensen-lower-bound}
\liminf_{N\to\infty}\frac{1}N\int_{\R^2}\Theta_{4\pi}(\rho_{N,R_N})\,dx\ \geq H(\kappa).
\end{equation}
\end{lemma}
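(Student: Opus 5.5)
Since $\rho_{N,R_N}$ is supported in $\overline{B_{R_N}}$ (Lemma~\ref{lem:dirichlet-family}) and $\Theta_{4\pi}(0)=0$, the energy is an integral over $B_{R_N}$. Two routes give the lower bound. The simplest is Jensen's inequality, exactly as in the proof of Proposition~\ref{prop:no-rank-uniform-mass}. The function $\Theta_{4\pi}$ is convex on $[0,\infty)$, since its power series $\Theta_{4\pi}(s)=\sum_{k\ge2}(4\pi)^k s^{k-1}/k!$ has nonnegative coefficients. Hence
\[
\frac1N\int_{\R^2}\Theta_{4\pi}(\rho_{N,R_N})\,dx\ \ge\ \frac{|B_{R_N}|}{N}\,\Theta_{4\pi}\!\left(\frac{1}{|B_{R_N}|}\int_{B_{R_N}}\rho_{N,R_N}\,dx\right).
\]
Next, $|B_{R_N}|/N=\pi\kappa_N\to\pi\kappa$. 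By Lemma~\ref{lem:density-family}, the average equals $\frac{N}{|B_{R_N}|}\cdot\frac1N\int\rho_{N,R_N}$, which tends to $g(\kappa)/(\pi\kappa)=\rho_\infty(\kappa)$. Since $\Theta_{4\pi}$ is continuous, the right-hand side converges to $\pi\kappa\,\Theta_{4\pi}(\rho_\infty(\kappa))$, and this equals $H(\kappa)$ by \eqref{def:H}. Taking the $\liminf$ gives \eqref{eq:jensen-lower-bound}.

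As a cross-check, one can also use the uniform interior limit, Lemma~\ref{lem:pointwise-bulk}. For fixed $\delta\in(0,1)$, monotonicity and nonnegativity of $\Theta_{4\pi}$ give
\[
\frac1N\int_{\R^2}\Theta_{4\pi}(\rho_{N,R_N})\,dx\ \ge\ \frac1N\int_{B_{(1-\delta)R_N}}\Theta_{4\pi}(\rho_{N,R_N})\,dx .
\]
On $B_{(1-\delta)R_N}$, $\rho_{N,R_N}\to\rho_\infty(\kappa)$ uniformly, and $\Theta_{4\pi}$ is uniformly continuous on bounded sets. The right-hand side therefore tends to $\pi(1-\delta)^2\kappa\,\Theta_{4\pi}(\rho_\infty(\kappa))$. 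Letting $\delta\downarrow0$ again yields $H(\kappa)$.

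No real obstacle is expected. The only points to check are the convexity of $\Theta_{4\pi}$ (needed for Jensen) and the identification of the limiting average with $\rho_\infty(\kappa)$, which uses $g(\kappa)=\pi\kappa\rho_\infty(\kappa)$.
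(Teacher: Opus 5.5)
Your proof is correct and is essentially the paper's proof: Jensen's inequality for the convex function $\Theta_{4\pi}$ on $(B_{R_N},dx/|B_{R_N}|)$, the mass limit of Lemma~\ref{lem:density-family} giving the averaged density $\to\rho_\infty(\kappa)$, continuity of $\Theta_{4\pi}$, and \eqref{def:H}. Your cross-check via Lemma~\ref{lem:pointwise-bulk} is also valid. The paper uses that interior limit only in the subsequent Lemma~\ref{lem:exp-density-limit}, for the matching upper bound.
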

 \begin{proof}
Note that $\Theta_{4\pi}$ is nonnegative, increasing and convex on $[0,\infty)$. Since $\rho_{N,R_N}\ge0$ vanishes outside $B_{R_N}$ and $\Theta_{4\pi}(0)=0$, by Jensen's
inequality for the convex function $\Theta_{4\pi}$
on the probability space $(B_{R_N},dx/|B_{R_N}|)$, we derive
\begin{equation} \label{lim-energy}
\frac{1}{|B_{R_N}|}\int_{\R^2}\Theta_{4\pi}(\rho_{N,R_N})\,dx
=
\frac{1}{|B_{R_N}|}\int_{B_{R_N}}\Theta_{4\pi}(\rho_{N,R_N})\,dx
\ \ge\
\Theta_{4\pi}\Big(\frac{1}{|B_{R_N}|}\int_{B_{R_N}}\rho_{N,R_N}\,dx\Big).
\end{equation}
By Lemma~\ref{lem:density-family} and the identity $|B_{R_N}|=\pi R_N^2=\pi \kappa_N N$, we obtain
\begin{equation} \label{lim-mass}
\frac{1}{|B_{R_N}|}\int_{B_{R_N}}\rho_{N,R_N}\,dx
=
\frac{1}{\pi \kappa_N N}\int_{\R^2}\rho_{N,R_N}\,dx
\to
\frac{g(\kappa)}{\pi \kappa}=\rho_\infty(\kappa)
\quad \text{as } N \to \infty.
\end{equation}
Multiplying \eqref{lim-energy} by $|B_{R_N}|/N = \pi \kappa_N$, then using \eqref{lim-mass}, $\kappa_N \to \kappa$ and the continuity of $\Theta_{4\pi}$, we obtain
$$ \liminf_{N\to\infty}\frac{1}N\int_{\R^2}\Theta_{4\pi}(\rho_{N,R_N})\,dx\ \geq \pi \kappa \Theta_{4\pi}(\rho_\infty(\kappa)) = H(\kappa),
$$
which is ~\eqref{eq:jensen-lower-bound}.
\end{proof}

 \begin{lemma}[Energy limit]
\label{lem:exp-density-limit}
We have
 \begin{equation} \label{eq:exp-density-limit}
\lim_{N\to\infty}\frac{1}N\int_{\R^2}\Theta_{4\pi}(\rho_{N,R_N}(x))\,dx
=
H(\kappa).
\end{equation}
\end{lemma}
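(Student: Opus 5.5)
The lower bound is Lemma~\ref{lem:jensen-lower}, so it remains to prove
\[
\limsup_{N\to\infty}\frac1N\int_{\R^2}\Theta_{4\pi}(\rho_{N,R_N})\,dx\le H(\kappa).
\]
Fix $\delta\in(0,1)$ and split $\R^2$ into the interior ball $B_{(1-\delta)R_N}$, the annulus $B_{R_N}\setminus B_{(1-\delta)R_N}$, and the exterior. The exterior contributes nothing, since $\rho_{N,R_N}$ is supported in $\overline{B_{R_N}}$ (Lemma~\ref{lem:dirichlet-family}) and $\Theta_{4\pi}(0)=0$.

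\emph{Interior.} The function $\Theta_{4\pi}$ is continuous on $[0,\infty)$, hence uniformly continuous on $[0,M]$ with $M:=\max\{\tilde C,\rho_\infty(\kappa)+1\}$, where $\tilde C$ is the constant of Lemma~\ref{lem:uniform-rhoN}. By Lemma~\ref{lem:pointwise-bulk}, $\sup_{B_{(1-\delta)R_N}}|\Theta_{4\pi}(\rho_{N,R_N})-\Theta_{4\pi}(\rho_\infty(\kappa))|\to0$. Hence
\[
\frac1N\int_{B_{(1-\delta)R_N}}\Theta_{4\pi}(\rho_{N,R_N})\,dx
=\frac{\pi(1-\delta)^2R_N^2}{N}\bigl(\Theta_{4\pi}(\rho_\infty(\kappa))+o(1)\bigr)
\to\pi\kappa(1-\delta)^2\Theta_{4\pi}(\rho_\infty(\kappa)),
\]
and the right-hand side equals $(1-\delta)^2H(\kappa)$ by \eqref{def:H}.

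\emph{Boundary layer.} By Lemma~\ref{lem:uniform-rhoN}, $0\le\rho_{N,R_N}\le\tilde C$ everywhere. Since $\Theta_{4\pi}$ is increasing, the integrand on the annulus is at most $\Theta_{4\pi}(\tilde C)$. The annulus has area $\pi R_N^2(1-(1-\delta)^2)\le2\pi\delta\kappa_NN$. Its contribution to $\frac1N\int\Theta_{4\pi}(\rho_{N,R_N})\,dx$ is therefore at most $2\pi\delta\kappa_N\Theta_{4\pi}(\tilde C)$, whose limsup is $2\pi\delta\kappa\Theta_{4\pi}(\tilde C)$. This is $O(\delta)$ uniformly in $N$; it is essential here that $\tilde C$ does not depend on $\delta$.

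\emph{Conclusion.} Combining the two estimates gives
\[
\limsup_{N\to\infty}\frac1N\int_{\R^2}\Theta_{4\pi}(\rho_{N,R_N})\,dx\le(1-\delta)^2H(\kappa)+2\pi\delta\kappa\,\Theta_{4\pi}(\tilde C).
\]
Letting $\delta\downarrow0$ gives the upper bound $H(\kappa)$, which together with Lemma~\ref{lem:jensen-lower} proves \eqref{eq:exp-density-limit}. The limit holds for every choice of eigenfunctions in the top eigenspace, because both Lemma~\ref{lem:pointwise-bulk} and Lemma~\ref{lem:uniform-rhoN} are independent of that choice. The substantive inputs, namely uniform interior convergence and a global $\delta$-independent bound, are already available. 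The only delicate point is the boundary layer, where pointwise convergence fails; the uniform bound, together with the vanishing relative area of the layer as $\delta\to0$, handles it.
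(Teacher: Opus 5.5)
Your proposal is correct and follows the paper's proof essentially step for step: uniform convergence on $B_{(1-\delta)R_N}$ from Lemma~\ref{lem:pointwise-bulk}, the $\delta$-independent global bound of Lemma~\ref{lem:uniform-rhoN} on the annulus of area at most $2\pi\delta\kappa_N N$, then $\delta\downarrow0$, closed by the Jensen lower bound of Lemma~\ref{lem:jensen-lower}. Your appeal to uniform continuity of $\Theta_{4\pi}$ on $[0,M]$ is harmless but more than needed: the uniform limit is the constant $\rho_\infty(\kappa)$, so continuity of $\Theta_{4\pi}$ at that single point already suffices, which is how the paper argues.
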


\begin{proof}
Fix $\delta\in(0,1)$. By Lemma~\ref{lem:pointwise-bulk} and the
continuity of $\Theta_{4\pi}$, we have
\[
 \sup_{x\in B_{(1-\delta)R_N}}
 \big|\Theta_{4\pi}(\rho_{N,R_N}(x))-
       \Theta_{4\pi}(\rho_\infty(\kappa))\big|
 \to0\qquad\text{as }N\to\infty.
\]
This, together with  $|B_{(1-\delta)R_N}|=\pi(1-\delta)^2R_N^2=\pi(1-\delta)^2 \kappa_N N$ and the boundedness of $\{\kappa_N\}_{N \geq 1}$, implies
\[
\int_{B_{(1-\delta)R_N}}\Theta_{4\pi}(\rho_{N,R_N})\,dx
=
\pi(1-\delta)^2\kappa_N N\,\Theta_{4\pi}(\rho_\infty(\kappa))+o(N)_{N\to\infty}.
\]
Next, by using the fact that $\Theta_{4\pi}$ is increasing, the bound  $\rho_{N,R_N}\le \tilde C$ everywhere in $\R^2$ (Lemma~\ref{lem:uniform-rhoN}) and 
 $|B_{R_N}\setminus B_{(1-\delta)R_N}|=\pi
R_N^2(2\delta-\delta^2)\le2\delta\pi\kappa_N N$, we derive
 \[
\int_{B_{R_N}\setminus B_{(1-\delta)R_N}}\Theta_{4\pi}(\rho_{N,R_N})\,dx
\ \le\
2\delta\pi \kappa_N N\,\Theta_{4\pi}(\tilde C).
\]
From the above two estimates and $\kappa_N \to \kappa$, we obtain
 \[
\limsup_{N\to\infty}\frac{1}N\int_{\R^2}\Theta_{4\pi}(\rho_{N,R_N})\,dx
\ \le\
\pi(1-\delta)^2\kappa\,\Theta_{4\pi}(\rho_\infty(\kappa))+2\delta\pi \kappa\,\Theta_{4\pi}(\tilde C).
\]
Letting $\delta \to 0$ gives
 \[
\limsup_{N \to \infty} \frac{1}{N}\int_{\R^2} \Theta_{4\pi}(\rho_{N,R_N})\,dx\le\pi \kappa \,\Theta_{4\pi}(\rho_\infty(\kappa))=H(\kappa).
\]
This, combined with the lower bound \eqref{eq:jensen-lower-bound}, implies \eqref{eq:exp-density-limit}. The proof is complete.
\end{proof}

Recall that $\beta:=\ln(1+4/\kappa)$. Since
$\rho_\infty(\kappa)=\beta/(4\pi)$ and
$e^\beta-1=4/\kappa$, from the definition of $\Theta_{4\pi}$, we derive 
\begin{equation} \label{rela:Theta}
 \Theta_{4\pi}(\rho_\infty(\kappa))
 =\frac{e^\beta-1-\beta}{\beta/(4\pi)}
 =\frac{4\pi(4/\kappa-\beta)}{\beta}.
\end{equation}

\begin{proof}[\textbf{Proof of Theorem~\ref{thm:intro-intermediate-density}}]
The limit \eqref{lim:Dirichlet} is proved in Lemma~\ref{lem:density-family}. Finally, the limit \eqref{est:coupling} follows from Lemma~\ref{lem:exp-density-limit} and \eqref{compute:H}. The proof is complete.
\end{proof}

 \subsection{Asymptotic optimality of the localized mass}
\label{subsec:variational-optimality}

We now show that the mass limit of the Dirichlet ball family  \eqref{u_j^(R)} is the asymptotic variational supremum for the localized  mass.

Let $\cA=-\Delta+1$ on $L^2(\R^2)$ and define
\[
 \mathcal S_R:L^2(\R^2)\to L^2(B_R),
 \qquad \mathcal S_Rv:=\big(\cA^{-1/2}v\big)|_{B_R},
\]
as well as the restricted resolvent
\[
 \cQ_R:=\mathcal S_R\mathcal S_R^*=\1_{B_R}\cA^{-1}\1_{B_R}
 \quad\text{on }L^2(B_R)
\]
where $\1_{B_R}$ denotes the characteristic function on $B_R$. Here and below, functions on $B_R$ are identified with their zero
extensions when acted on by a whole-space operator.

\begin{lemma}[Ky Fan reduction]
\label{lem:variational-ky-fan}
Let $N\in\N$ and $R>0$, and let $\mathfrak{m}_N(R)$ be defined by \eqref{eq:intro-DN-def}. Then the operator $\cQ_R$ is positive and compact, and
\begin{equation} \label{KyFan}
 \mathfrak{m}_N(R)=\sum_{k=1}^N \nu_k(\cQ_R),
\end{equation}
where $\nu_1(\cQ_R) \geq \nu_2(\cQ_R) \geq \cdots$ denote its eigenvalues, counted with multiplicity.
\end{lemma}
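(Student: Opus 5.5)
The plan is to rewrite $\mathfrak{m}_N(R)$ as a trace of a finite-rank compression of $\cQ_R$, and then invoke Ky Fan's maximum principle~\cite{Fan49}.

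\textbf{Properties of $\cQ_R$.} Positivity follows from
\[
\langle \cQ_R f,f\rangle=\|\mathcal S_R^* f\|^2=\|\cA^{-1/2}(\1_{B_R}f)\|_{L^2}^2\ge0 .
\]
This is strictly positive for $f\neq0$, since $\cA^{-1/2}$ is injective. For compactness, note that $\cQ_R=\mathcal S_R\mathcal S_R^*$, so it suffices to show that $\mathcal S_R$ is compact. Here $\cA^{-1/2}$ maps $L^2(\R^2)$ isometrically onto $H^1(\R^2)$, and restriction $H^1(\R^2)\to L^2(B_R)$ is compact by Rellich--Kondrachov on the bounded ball. Alternatively, $\cQ_R$ has the kernel $\1_{B_R}(x)\,\tfrac{1}{2\pi}K_0(|x-y|)\,\1_{B_R}(y)$, which lies in $L^2(B_R\times B_R)$ because $K_0$ has only a logarithmic singularity. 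So $\cQ_R$ is Hilbert--Schmidt.

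\textbf{Reformulating the supremum.} The map $u\mapsto v=\cA^{1/2}u$ is a bijection from $H^1$-orthonormal families to $L^2(\R^2)$-orthonormal families $\{v_n\}_{n=1}^N$, with $u_n=\cA^{-1/2}v_n$. Then
\[
\int_{B_R}\sum_{n}|u_n|^2\,dx=\sum_{n}\|\mathcal S_Rv_n\|_{L^2(B_R)}^2=\sum_n\langle \mathcal S_R^*\mathcal S_R v_n,v_n\rangle_{L^2(\R^2)}
=\operatorname{Tr}(P\,\mathcal S_R^*\mathcal S_R P),
\]
where $P$ is the orthogonal projection onto $\operatorname{span}\{v_n\}$. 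Hence $\mathfrak m_N(R)$ is the supremum of $\operatorname{Tr}(PTP)$ over rank-$N$ projections $P$, with $T:=\mathcal S_R^*\mathcal S_R$ compact and nonnegative on $L^2(\R^2)$.

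\textbf{Applying Ky Fan.} Ky Fan's principle for compact nonnegative operators gives $\sup_P \operatorname{Tr}(PTP)=\sum_{k=1}^N\nu_k(T)$. The upper bound comes from the min--max inequality $\langle Tv_n,v_n\rangle$-summation. The supremum is attained by taking the $v_n$ to be top eigenvectors, after padding with elements of $\ker T$ if $T$ has fewer than $N$ positive eigenvalues; here that cannot happen, because $\cQ_R$ is injective on the infinite-dimensional space $L^2(B_R)$ and so has infinitely many positive eigenvalues. Finally, $\mathcal S_R^*\mathcal S_R$ and $\mathcal S_R\mathcal S_R^*=\cQ_R$ have the same nonzero eigenvalues with multiplicities, via the singular value decomposition, or equivalently because $\mathcal S_R$ maps eigenvectors of one isomorphically onto those of the other. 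This yields~\eqref{KyFan}.

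\textbf{Main obstacle.} Nothing deep is involved. The points needing care are the justification of compactness and the precise infinite-dimensional form of Ky Fan's principle, namely that the supremum over orthonormal $N$-tuples equals the sum of the top $N$ eigenvalues. For the latter I would give the short argument directly: expand $\sum_n\langle Tv_n,v_n\rangle=\sum_k\nu_k\sum_n|\langle v_n,e_k\rangle|^2$, where $\{e_k\}$ is an orthonormal eigenbasis of $T$ on $(\ker T)^\perp$ and the $\nu_k$ are its eigenvalues in nonincreasing order. The weights $c_k=\sum_n|\langle v_n,e_k\rangle|^2$ satisfy $c_k\in[0,1]$ and $\sum_k c_k\le N$. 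A rearrangement argument then bounds the sum by $\sum_{k\le N}\nu_k$.
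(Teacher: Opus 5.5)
Your proposal is correct and follows essentially the same route as the paper: compactness of $\mathcal S_R$ via the isometry $\cA^{-1/2}\colon L^2(\R^2)\to H^1(\R^2)$ and Rellich--Kondrachov, the bijection $v_n=\cA^{1/2}u_n$ between $H^1$- and $L^2$-orthonormal families, Ky Fan's principle proved through the Bessel weights $c_k\in[0,1]$ with $\sum_k c_k\le N$, and transfer of the nonzero spectrum from $\mathcal S_R^*\mathcal S_R$ to $\cQ_R=\mathcal S_R\mathcal S_R^*$. Your extra remarks are valid but inessential refinements: $\cQ_R$ is injective, so no padding by kernel vectors is needed, and the $K_0$ kernel gives an alternative Hilbert--Schmidt proof of compactness.
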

\begin{proof}
\noindent\textbf{Step 1:} We will prove compactness of $\mathcal S_R$.
Indeed, we note that the operator $\cA^{-1/2}$ maps $L^2(\R^2)$ continuously into $H^1(\R^2)$, and restriction maps $H^1(\R^2)$ continuously into
$H^1(B_R)$. Moreover, the embedding of $H^1(B_R)$ into $L^2(B_R)$ is compact by the Rellich--Kondrachov theorem. Therefore, their composition
$\mathcal S_R$ is compact.  Both operators $\mathcal S_R^*\mathcal S_R$ and $\cQ_R=\mathcal S_R\mathcal S_R^*$ are positive and compact, and they have the same nonzero eigenvalues with the same multiplicities. \medskip

\noindent\textbf{Step 2:} We will prove that the two variational problems are equivalent.  For any $H^1$-orthonormal family $\{u_n\}_{n=1}^N \subset H^1(\R^2)$, put $v_n := \cA^{1/2} u_n$. Then $\mathcal S_Rv_n = u_n |_{B_R}$, $\{v_n\}_{n=1}^N$ forms an $L^2(\R^2)$-orthonormal family and 
\[ \int_{B_R} \sum_{n=1}^N |u_n|^2 \, dx = \sum_{n=1}^N \| \mathcal S_R v_n \|_{L^2(B_R)}^2 = \sum_{n=1}^N \langle \mathcal S_R^* \mathcal S_R v_n, v_n \rangle_{L^2(\R^2)}. \]

Conversely, because $\cA^{1/2}$ acts as a bijection between $H^1(\R^2)$ and $L^2(\R^2)$, every $L^2$-orthonormal family $\{v_n\}_{n=1}^N$ uniquely corresponds to an $H^1$-orthonormal family $\{u_n\}_{n=1}^N$ via the inverse mapping $u_n = \cA^{-1/2} v_n$. Consequently, optimizing over all possible $H^1$-orthonormal families $\{u_n\}_{n=1}^N$ is the same as optimizing over all $L^2$-orthonormal families $\{v_n\}_{n=1}^N$. This and  Step 1 imply
\begin{equation} \label{eq:variational-supremum}  \mathfrak{m}_N(R) = \sup_{\substack{\{v_n\}_{n=1}^N \subset L^2(\R^2) \\ L^2\text{-orthonormal}}} \sum_{n=1}^N \langle \mathcal S_R^* \mathcal S_R v_n, v_n \rangle_{L^2(\R^2)}.
\end{equation}

\noindent\textbf{Step 3:} We will prove Ky Fan's principle.
 Let $\cT\ge0$ be compact and self-adjoint on $L^2(\R^2)$.
Choose an orthonormal eigenbasis $\{e_j\}_{j\in I }$ of
$(\ker\cT)^\perp$ corresponding to its positive eigenvalues
$\nu_j$, listed in non-increasing order. Here $I=\N$ if $\cT$ has
infinite rank, and $I=\{1,\ldots,j_*\}$ if its rank is $j_*<\infty$;
when $j_*=0$, $I=\emptyset$. In the finite-rank case set $\nu_j=0$ for $j>j_*$.

For any $L^2$-orthonormal family $\{v_n\}_{n=1}^N$, set $a_j:=\sum_{n=1}^N|\langle v_n,e_j\rangle|^2$ for $j \in I$.
By Bessel's inequality, $0\le a_j\le1$ and
$
 \sum_{j\in I} a_j
 =\sum_{n=1}^N\|P_{(\ker\cT)^\perp}v_n\|_{L^2}^2\le N.
$
On the other hand, by the spectral theorem, we get
$\sum_{n=1}^N\langle\cT v_n,v_n\rangle
 =\sum_{j\in I}\nu_j a_j$.

If $\cT$ has at least $N$ positive eigenvalues, then thanks to the monotonicity, we obtain
\begin{align*}
 \sum_{j\in I}\nu_j a_j
 &\leq \sum_{j=1}^N\nu_j a_j
       +\nu_N\sum_{\substack{j\in I \\j>N}}a_j \leq \sum_{j=1}^N\bigl[\nu_j a_j+\nu_N(1-a_j)\bigr]
 \leq \sum_{j=1}^N\nu_j.
\end{align*}
Equality is attained by $v_n=e_n$ for $1\le n\le N$.
If $j_*<N$, then $\sum_{j=1}^{j_*}\nu_j a_j\leq\sum_{j=1}^{j_*}\nu_j$ and 
equality is attained by the $j_*$ positive-eigenvalue vectors together
with $N-j_*$ orthonormal vectors from $\ker\cT$. 
Thus, in all cases, we obtain 
\begin{equation}\label{sup-leq-sum}
 \sup_{\substack{\{v_n\}_{n=1}^N\subset L^2(\R^2)\\
                  L^2\text{-orthonormal}}}
 \sum_{n=1}^N\langle\cT v_n,v_n\rangle
 =\sum_{j=1}^N\nu_j(\cT).
\end{equation}

Applying~\eqref{sup-leq-sum} to $\cT=\mathcal S_R^*\mathcal S_R$
and using~\eqref{eq:variational-supremum} and Step 1 yields \eqref{KyFan}. 
\end{proof}

Recall that the eigenvalues $\lambda_k(R)$ of $\cA_R=-\Delta+1$ are given in~\eqref{eq:lambdaj}.

\begin{lemma}[Lower bound]
\label{lem:variational-lower}
For every $R>0$ and $k\ge1$,
$\nu_k(\cQ_R) \ge  \lambda_k(R)^{-1}$.
Consequently, for any $N \in \N$ and $R>0$, with $\mathfrak{m}_N(R)$ as in \eqref{eq:intro-DN-def} and  $\rho_{N,R}$ as in  \eqref{rho_NR}, we have
\begin{equation}
 \mathfrak{m}_N(R)\ge\sum_{k=1}^N \lambda_k(R)^{-1}
 =\int_{\R^2}\rho_{N,R}(x)\,dx.
 \label{eq:variational-lower}
\end{equation}
\end{lemma}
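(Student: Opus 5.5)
The plan is to prove $\nu_k(\cQ_R)\ge\lambda_k(R)^{-1}$ via the min--max principle for the compact positive operator $\cQ_R=\1_{B_R}\cA^{-1}\1_{B_R}$. Then \eqref{eq:variational-lower} follows by summing over $k\le N$, using Lemma~\ref{lem:variational-ky-fan} and the mass identity \eqref{eq:rhoN-dirichlet}. Alternatively, one can skip the eigenvalue comparison: Lemma~\ref{lem:dirichlet-family} shows that the family $\{u_j^{(R)}\}_{j=1}^N$ is admissible in \eqref{eq:intro-DN-def}, is supported in $\overline{B_R}$, and has localized mass $\sum_{j\le N}\lambda_j(R)^{-1}$. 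That already gives the inequality for $\mathfrak m_N(R)$. The eigenvalue statement, however, needs a genuine comparison.

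\textbf{Step 1.} For $f\in L^2(B_R)$ (extended by zero), set $w=\cA^{-1}f\in H^1(\R^2)$. For every $\phi\in H_0^1(B_R)$, extended by zero, I will establish the variational characterization
\[
\langle\cQ_R f,f\rangle=\langle \cA^{-1}f,f\rangle=\sup_{\phi\in H^1(\R^2)}\Big(2\operatorname{Re}\langle f,\phi\rangle-\|\phi\|_{H^1}^2\Big)\ \ge\ 2\operatorname{Re}\langle f,\phi\rangle-\|\phi\|_{H^1}^2 .
\]
Indeed, $\|\phi-w\|_{H^1}^2\ge0$ expands to $\|\phi\|_{H^1}^2-2\operatorname{Re}\langle f,\phi\rangle+\langle f,w\rangle\ge0$, because $\langle \phi,w\rangle_{H^1}=\langle\phi,f\rangle$.

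\textbf{Step 2.} Let $E_k=\operatorname{span}\{\psi_j(\cdot/R)\}_{j\le k}\subset L^2(B_R)$. For $f=\sum_{j\le k}c_j\psi_j(\cdot/R)\in E_k$, choose $\phi=\cA_R^{-1}f$, which lies in $H_0^1(B_R)$. Its $H^1(\R^2)$-norm squared equals $\langle\cA_R\phi,\phi\rangle_{L^2(B_R)}=\langle f,\cA_R^{-1}f\rangle$, and likewise $\langle f,\phi\rangle=\langle f,\cA_R^{-1}f\rangle$. Step 1 then gives
\[
\langle\cQ_Rf,f\rangle\ge\langle\cA_R^{-1}f,f\rangle=\sum_{j\le k}\lambda_j(R)^{-1}|c_j|^2\,R^2\ge\lambda_k(R)^{-1}\|f\|_{L^2(B_R)}^2,
\]
since the $\psi_j(\cdot/R)$ are orthogonal with squared norm $R^2$ and $\lambda_j(R)\le\lambda_k(R)$ for $j\le k$. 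Thus $\cQ_R\ge\cA_R^{-1}$ in the quadratic-form sense on $L^2(B_R)$. This is the key step: the Dirichlet realization is the restricted variational problem, so its resolvent lies below the compressed whole-space resolvent.

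\textbf{Step 3.} The Courant--Fischer max--min principle for compact positive operators states
\[
\nu_k(\cQ_R)=\max_{\dim E=k}\min_{0\ne f\in E}\frac{\langle\cQ_Rf,f\rangle}{\|f\|^2}.
\]
Taking $E=E_k$ yields $\nu_k(\cQ_R)\ge\lambda_k(R)^{-1}$. Summing over $k\le N$ and invoking \eqref{KyFan} together with \eqref{eq:rhoN-dirichlet} gives \eqref{eq:variational-lower}. The only delicate point is Step 1–2, namely verifying that zero extension preserves the $H^1$ norm and pairings; this holds because $H_0^1(B_R)$ embeds isometrically into $H^1(\R^2)$.
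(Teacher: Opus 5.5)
Your proof is correct and follows essentially the same route as the paper: a variational duality formula for $\langle \cQ_R f,f\rangle$ over $H^1(\R^2)$, restricted to zero extensions of $H_0^1(B_R)$ functions to get $\cQ_R\ge\cA_R^{-1}$ as quadratic forms, then the min--max principle, and finally summation using Lemma~\ref{lem:variational-ky-fan} and \eqref{eq:rhoN-dirichlet}. The differences are cosmetic. You use the Legendre-type formula $\sup_\phi\bigl(2\operatorname{Re}\langle f,\phi\rangle-\|\phi\|_{H^1}^2\bigr)$ with the explicit test function $\phi=\cA_R^{-1}f$ instead of the paper's ratio formula, and you apply max--min directly on the span of the first $k$ rescaled Dirichlet eigenfunctions.
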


\begin{proof}
For $v\in L^2(B_R)$, the quadratic-form duality formula gives
\begin{equation} \label{eq:Q_R-quadratic-form}
 \langle \cQ_R v,v\rangle
 =\sup_{0\ne w\in H^1(\R^2)}
 \frac{|\langle v,w\rangle_{L^2(B_R)}|^2}{\int_{\R^2}(|\nabla w|^2+|w|^2)\,dx}.
\end{equation}
Restricting the supremum to the zero extensions of functions in
$H_0^1(B_R)$ leads to
\[
 \langle \cQ_R v,v\rangle
 \geq \sup_{0\ne w\in H_0^1(B_R)}
  \frac{|\langle v,w\rangle_{L^2(B_R)}|^2}{\int_{B_R}(|\nabla w|^2+|w|^2)\,dx}
 =\langle \cA_R^{-1}v,v\rangle_{L^2(B_R)}.
\]
Therefore, by using the min-max principle~\cite[Section~XIII.1]{ReedSimonIV}, we derive 
$\nu_k(\cQ_R) \geq \lambda_k(R)^{-1}$. Finally, summing over $1 \leq k \leq N$ and using
Lemma~\ref{lem:variational-ky-fan} together with
\eqref{eq:rhoN-dirichlet} yield ~\eqref{eq:variational-lower}.
\end{proof}

The IMS argument below changes the radius by a factor tending to one.

\begin{lemma}[Upper bound]
\label{lem:variational-ims-upper}
There is a universal constant $C_2>0$ with the following property. For $R>0$, $\delta\in(0,1)$, and $R'=R(1+\delta)$,
\begin{equation}  \label{eq:variational-ims-eigenvalue}
 \nu_k(\cQ_R) \leq
 \left(1+\frac{C_2}{(\delta R)^2}\right) \lambda_k(R')^{-1}
 \quad \forall k \geq 1.
\end{equation}
Consequently, for any $N \in \N$, with $ \mathfrak{m}_N(R)$ as in \eqref{eq:intro-DN-def}, we have
\begin{equation} \label{eq:variational-ims-sum}
 \mathfrak{m}_N(R)\le
 \left(1+\frac{C_2}{(\delta R)^2}\right)
 \sum_{k=1}^N \lambda_k(R')^{-1}.
\end{equation}
\end{lemma}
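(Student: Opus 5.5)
We argue with the min--max principle applied to the quadratic form of $\cQ_R$, using an IMS-type cutoff. Fix a smooth cutoff $\chi\in C_c^\infty(B_{R'})$ with $0\le\chi\le1$, $\chi\equiv1$ on $B_R$, and $|\nabla\chi|\le C/(\delta R)$; such a $\chi$ exists with a universal $C$ because the annulus $B_{R'}\setminus B_R$ has width $\delta R$.

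For $v\in L^2(B_R)$ we start from the duality formula \eqref{eq:Q_R-quadratic-form}. Given $0\ne w\in H^1(\R^2)$, we have $\langle v,w\rangle_{L^2(B_R)}=\langle v,\chi w\rangle_{L^2(B_R)}$, and $\chi w\in H_0^1(B_{R'})$. The key step is the IMS-type inequality
\[
\int_{\R^2}\big(|\nabla(\chi w)|^2+|\chi w|^2\big)\,dx\le \int_{\R^2}\big(|\nabla w|^2+|w|^2\big)\,dx+\int_{\R^2}|\nabla\chi|^2|w|^2\,dx .
\]
This follows from the IMS formula for the partition $\chi^2+\eta^2=1$, choosing $\eta$ with $|\nabla\eta|\le C/(\delta R)$ as well, and dropping the nonnegative $\eta$-term. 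Since $|\nabla\chi|^2\le C^2/(\delta R)^2$, the right-hand side is at most $\big(1+C_2/(\delta R)^2\big)\|w\|_{H^1}^2$ with $C_2=C^2$.

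Taking the supremum over $w$ then gives
\[
\langle\cQ_Rv,v\rangle\le\Big(1+\frac{C_2}{(\delta R)^2}\Big)\sup_{0\ne\tilde w\in H_0^1(B_{R'})}\frac{|\langle v,\tilde w\rangle|^2}{\|\tilde w\|_{H^1}^2}=\Big(1+\frac{C_2}{(\delta R)^2}\Big)\langle \1_{B_R}\cA_{R'}^{-1}\1_{B_R}v,v\rangle .
\]
This is the same duality as in Lemma~\ref{lem:variational-lower}, now on $B_{R'}$.

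Next we compare eigenvalues. The operator $\1_{B_R}\cA_{R'}^{-1}\1_{B_R}$ is a compression of $\cA_{R'}^{-1}$, so by Cauchy interlacing (equivalently, min--max) its $k$-th eigenvalue is at most $\nu_k(\cA_{R'}^{-1})=\lambda_k(R')^{-1}$. Combining this with the operator inequality above, min--max yields \eqref{eq:variational-ims-eigenvalue}. Summing over $1\le k\le N$ and using Lemma~\ref{lem:variational-ky-fan} gives \eqref{eq:variational-ims-sum}.

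The main obstacle is making the IMS localization bound clean: it must produce a multiplicative factor with a universal constant and no loss in the main term. Writing $|\nabla(\chi w)|^2=\chi^2|\nabla w|^2+|w|^2|\nabla\chi|^2+2\chi w\nabla\chi\cdot\nabla w$ (for real $w$, the cross term being handled similarly via real parts) and integrating naively leaves a cross term. Rather than bounding it by Cauchy--Schwarz, which would lose a constant factor in the main term, we use the exact IMS identity, which absorbs it.
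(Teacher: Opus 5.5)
Your proposal is correct and follows the paper's proof essentially step for step: the IMS partition $\chi^2+\eta^2=1$ together with the duality formula \eqref{eq:Q_R-quadratic-form} gives $\cQ_R\le\bigl(1+C_2/(\delta R)^2\bigr)\1_{B_R}\cA_{R'}^{-1}\1_{B_R}$ as quadratic forms, and then min--max for the compression and the Ky Fan reduction finish the argument. One small correction: dropping $\|\eta w\|_{H^1}^2$ from the IMS identity leaves the error term $\int_{\R^2}(|\nabla\chi|^2+|\nabla\eta|^2)|w|^2\,dx$, not $\int_{\R^2}|\nabla\chi|^2|w|^2\,dx$. Your one-term display actually fails for suitable $w$. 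Keeping the $|\nabla\eta|^2$ term is exactly why you need $|\nabla\eta|\le C/(\delta R)$, and it gives $C_2=2C^2$ instead of $C^2$; the conclusion is unaffected.
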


\begin{proof}
Let $R>0$, $\delta \in (0,1)$, and let $\chi,\eta\in C^\infty(\R^2;[0,1])$ be radial functions such that $\chi^2+\eta^2=1$ in $\R^2$, $\chi=1$ on $B_R$,
$\operatorname{supp}\chi\Subset B_{R'}$, and
$|\nabla\chi|^2+|\nabla\eta|^2\le \frac{C_2}{(\delta R)^2}$. For every $w \in H^1(\R^2)$, by the IMS identity (see \cite{LieYau88}),
\[
 \|\chi w\|_{H^1(\R^2)}^2 + \|\eta w\|_{H^1(\R^2)}^2
 = \| w \|_{H^1(\R^2)}^2 +\int_{\R^2}(|\nabla\chi|^2+|\nabla\eta|^2)|w|^2\,dx,
\]
which implies 
\begin{equation}  \label{eq:variational-ims-form}
 \|\chi w\|_{H^1(\R^2)}^2 \leq 
 \left(1+\frac{C_2}{(\delta R)^2}\right) \| w \|_{H^1(\R^2)}^2.
\end{equation}

Let $\cE: L^2(B_R)\to L^2(B_{R'})$ be the operator extending functions by zero in $B_{R'} \setminus B_R$. Since
$\chi=1$ on $B_R$, for $v\in L^2(B_R)$ and $w \in H^1(\R^2)$, we have
$\langle v,w\rangle_{L^2(B_R)}=\langle \cE v,\chi w\rangle_{L^2(B_{R'})}$
and $\chi w\in H_0^1(B_{R'})$. From \eqref{eq:Q_R-quadratic-form} and \eqref{eq:variational-ims-form}, we derive
\begin{align*}
 \langle \cQ_R v,v\rangle_{L^2(B_R)}
& \leq \left(1+\frac{C_2}{(\delta R)^2}\right)
 \sup_{0 \neq \phi \in H_0^1(B_{R'})}
 \frac{|\langle \cE v, \phi \rangle_{L^2(B_{R'})}|^2}{\int_{B_{R'}}(|\nabla \phi |^2 + |\phi|^2)\,dx} \\ & =\left(1+\frac{C_2}{(\delta R)^2}\right)
 \langle \cA_{R'}^{-1}\cE v,\cE v\rangle_{L^2(B_{R'})}.
\end{align*}
Thus, as quadratic forms on $L^2(B_R)$,
$\cQ_R \leq \left(1+\frac{C_2}{(\delta R)^2}\right)
 \cE^*\cA_{R'}^{-1}\cE$.
Hence by the min--max principle~\cite[Section~XIII.1]{ReedSimonIV},
$\nu_k(\cE^*\cA_{R'}^{-1}\cE)\le\nu_k(\cA_{R'}^{-1})=\lambda_k(R')^{-1}$. Combining this with the preceding quadratic-form inequality and applying the min--max principle once more, we obtain
\[
 \nu_k(\cQ_R) \leq \left(1+\frac{C_2}{(\delta R)^2}\right)
 \nu_k(\cE^*\cA_{R'}^{-1}\cE)
 \leq \left(1+\frac{C_2}{(\delta R)^2}\right)
 \lambda_k(R')^{-1},
\]
which implies ~\eqref{eq:variational-ims-eigenvalue}.  Summing over $1 \leq k \leq N$  and using
Lemma~\ref{lem:variational-ky-fan}, we deduce
\eqref{eq:variational-ims-sum}.
\end{proof}

 \begin{proof}[\textbf{Proof of Theorem~\ref{thm:intro-variational-optimality}}]
From Lemma~\ref{lem:variational-lower} and
Lemma~\ref{lem:density-family}, we have
\begin{equation} \label{est:lower-DN}
 \liminf_{N\to\infty}\frac{\mathfrak{m}_N(R_N)}N\ge g(\kappa).
\end{equation}
For the upper bound, note that $R_N\to\infty$ as $N\to\infty$.
Define the sequences $\{\delta_N\}_{N\ge1}$ and $\{R_N'\}_{N\ge1}$ by
$\delta_N=R_N^{-1/2}$ and $R_N'=R_N(1+\delta_N)$.
Then $\delta_N\in(0,1)$ for all sufficiently large $N$. Moreover,
$\delta_N \to 0$, $\frac{C_2}{(\delta_NR_N)^2}=\frac{C_2}{R_N} \to 0$ and $\frac{(R_N')^2}{N} \to \kappa$.
By Lemma~\ref{lem:variational-ims-upper} and
Lemma~\ref{lem:density-family}, we derive
\begin{equation} \label{est:upper-DN}
 \limsup_{N\to\infty}\frac{\mathfrak{m}_N(R_N)}N
 \le\lim_{N\to\infty}
 \left(1+\frac{C_2}{R_N}\right)
 \frac{1}N\sum_{k=1}^N \lambda_k(R_N')^{-1}
 =g(\kappa).
\end{equation}
Combining \eqref{est:lower-DN} and \eqref{est:upper-DN} yields~\eqref{eq:variational-optimality}.
\end{proof}
\section{Low-density regime}
\label{sec:low-density}

Let $\{\kappa_N\}_{N\ge1}$ be a sequence of positive numbers such that $\kappa_N\to\infty$. Set $R_N=\sqrt{\kappa_N N}$, so $R_N\to\infty$. Recall that $\mu_j$ and $\psi_j$ denote the eigenvalues and corresponding eigenfunctions of the Dirichlet Laplacian $-\Delta$ in the unit ball $B_1$. Moreover, $\{\psi_j\}_{j\ge1}\subset H_0^1(B_1)$ forms an orthonormal basis of $L^2(B_1)$.
Let $\{u_j^{(R_N)}\}_{j=1}^N$ and $\rho_{N,R_N}$ be defined by
\eqref{u_j^(R)} and~\eqref{rho_NR}, respectively.
Throughout this section, write $\rho_N:=\rho_{N,R_N}$. In this regime, the relevant normalization is $\kappa_N\rho_N$. We will first establish uniform estimates on the
fixed unit ball and then perform a suitable rescaling to $B_{R_N}$. This allows us to prove
Theorem~\ref{thm:low-density}.

 \begin{proof}[\textbf{Proof of Theorem~\ref{thm:low-density}}]
From \eqref{eq:lambdaj} and noting that $R_N^2=\kappa_N N$, we have 
 $\lambda_j(R_N) = 1 + \frac{\mu_j}{\kappa_N N}$. Since $0 < \mu_1 \le \mu_2 \le \dots \le \mu_N$, it follows that  $\lambda_j(R_N)^{-1}$ satisfies
 \[
\left( 1 + \frac{\mu_N}{\kappa_N N} \right)^{-1} \le \lambda_j(R_N)^{-1} \le 1, \qquad \text{for all } 1 \le j \le N.
\]
By Weyl's law ~\eqref{eq:weyl-ball},  $\mu_N = 4N(1+o(1)_{N \to \infty})$, which implies that  $\frac{\mu_N}{\kappa_N N} = O(\kappa_N^{-1}) \to 0$ as $N \to \infty$. Using the mass identity~\eqref{eq:rhoN-dirichlet}, we have $\int_{\R^2} \rho_N \, dx = \sum_{j=1}^N \lambda_j(R_N)^{-1}$, hence 
 \begin{equation} \label{eq:dilute-mass-squeeze}
\left( 1 + \frac{\mu_N}{\kappa_N N} \right)^{-1} \le \frac{1}{N} \int_{\mathbb{R}^2} \rho_N \, dx \le 1, \quad 0 \leq 1 - \frac{1}{N} \int_{\mathbb{R}^2} \rho_N \, dx \leq \frac{\mu_N}{\kappa_N N} =O(\kappa_N^{-1}),
\end{equation}
which implies ~\eqref{eq:dilute-mass} with an error
$O(\kappa_N^{-1})$.

Define 
 \[
q_N(y) := \kappa_N \rho_N(R_N y), \quad y \in \overline{B_1}.
\] 
Using the explicit representation of $\rho_N$ in ~\eqref{rho_NR}, we deduce
  \begin{equation} \label{q_N}
 q_N(y) = \kappa_N \sum_{j=1}^N \lambda_j(R_N)^{-1} R_N^{-2} |\psi_j(y)|^2 = \frac{1}{N} \sum_{j=1}^N \lambda_j(R_N)^{-1} |\psi_j(y)|^2.
\end{equation}
By the definition of $S_{0,N}$ and keeping in mind that $\lambda_j(R_N)^{-1} \le 1$, we obtain $q_N(y) \le \frac{S_{0,N}(y)}{N}$. This, together with \eqref{eq:dilute-projector-global} (which is independent of $\kappa_N$ and thus also holds in the low-density regime) and the fact that $\rho_N$ vanishes outside $B_{R_N}$, leads to
\eqref{eq:dilute-global-bound}.

Next, by the definition of $S_{0,N}$, $q_N$, the inequality $\lambda_j(R_N) \leq \lambda_N(R_N)$ for any $j \leq N$, the relation $\lambda_j(R_N) = 1 + \frac{\mu_j}{\kappa_N N}$ and \eqref{eq:dilute-projector-global}, we derive
 \[
0 \le \frac{S_{0,N}(y)}{N} - q_N(y) = \frac{1}{N} \sum_{j=1}^N \left( 1 - \lambda_j(R_N)^{-1} \right) |\psi_j(y)|^2 \le \frac{\mu_N}{\kappa_N N} \frac{S_{0,N}(y)}{N} \le C_1 \frac{\mu_N}{\kappa_N N}.
\]
 By the triangle inequality and~\eqref{est:S/N-pi} (which is independent of $\kappa_N$ and thus also holds in the low-density regime), for every fixed $\delta\in(0,1)$ and every $y\in\overline{B_{1-\delta}}$, 
\begin{equation} \label{q_N:bulk}
\left| q_N(y) - \frac{1}{\pi} \right| \le \left| q_N(y) - \frac{S_{0,N}(y)}{N} \right| + \left| \frac{S_{0,N}(y)}{N} - \frac{1}{\pi} \right| \le C_1 \frac{\mu_N}{\kappa_N N} + \varepsilon_{N,\delta},
\end{equation}
which, together with the convergences $\lim_{N \to \infty} \frac{\mu_N}{\kappa_N N} = \lim_{N \to \infty} \frac{4}{\kappa_N} = 0$ and $\varepsilon_{N,\delta} \to 0$ as $N \to \infty$,  implies 
\begin{equation}\label{eq:dilute-bulk}
 \lim_{N\to\infty}\sup_{x\in B_{(1-\delta)R_N}}
 \left|\kappa_N\rho_{N,R_N}(x)-\frac{1}{\pi}\right|=0.
\end{equation}

 Fix $1\le p<\infty$ and $\delta\in(0,1)$. By the change of variable $x=R_Ny$, the global bound~\eqref{eq:dilute-global-bound} and the interior estimate~\eqref{q_N:bulk}, we obtain
\begin{align*}
\frac{1}{|B_{R_N}|} \int_{B_{R_N}} \left| \kappa_N \rho_N(x) - \frac{1}{\pi} \right|^p \, dx &= \frac{1}{\pi} \int_{B_1} \left| q_N(y) - \frac{1}{\pi} \right|^p \, dy \\
&= \frac{1}{\pi} \int_{B_{1-\delta}} \left| q_N(y) - \frac{1}{\pi} \right|^p \, dy + \frac{1}{\pi} \int_{B_1 \setminus B_{1-\delta}} \left| q_N(y) - \frac{1}{\pi} \right|^p \, dy \\
& \le (1-\delta)^2 \left( \varepsilon_{N,\delta} + C_1 \frac{\mu_N}{\kappa_N N} \right)^p + (2\delta - \delta^2) \left( C_1 + \frac{1}{\pi} \right)^p.
\end{align*}
Letting $N \to \infty$ and $\delta \downarrow 0$ successively in the above estimate implies \eqref{eq:dilute-Lp}. 

We infer from ~\eqref{eq:Theta-def} that
$0\le\Theta_{4\pi}(s)-8\pi^2s\le \widehat C s^2$ for $0\le s\le1$.
By~\eqref{eq:dilute-global-bound}, $\|\rho_N\|_{L^\infty}
\le C/\kappa_N\to0$ as $N \to \infty$. Hence for $N$ large enough, $\rho_N(x) \leq 1$ for a.e. $x \in \R^2$. Therefore, for all sufficiently large $N$,
\begin{align*}
 0&\le\frac{1}N\int_{\R^2}\Theta_{4\pi}(\rho_N)\,dx
       -\frac{8\pi^2}{N}\int_{\R^2}\rho_N\,dx
 \le\frac{\widehat C}{N}\int_{\R^2}\rho_N^2\,dx
 \le \widehat C\|\rho_N\|_{L^\infty}
       \frac{1}N\int_{\R^2}\rho_N\,dx
 \le\frac{C_2}{\kappa_N},
\end{align*}
where $C_2$ is independent of $N$. This, together with~\eqref{eq:dilute-mass}, implies
\eqref{eq:dilute-energy}. 
\end{proof}
\begin{remark} \label{remark:dilute}

(i) In view of the proof of \eqref{eq:dilute-energy}, we see that
$$
 \frac{1}N\int_{\R^2}\Theta_{4\pi}(\rho_{N})\,dx
 =8\pi^2+O(\kappa_N^{-1})_{N \to \infty}.
$$

(ii) The convergence in \eqref{eq:dilute-Lp} holds for any finite $1 \le p < \infty$, but not for $p=\infty$ since 
\[
 \left\|\kappa_N\rho_N-\frac1\pi\right\|_{L^\infty(B_{R_N})}
 \ge\frac1\pi, \quad \forall N \geq 1,
\]
due to the continuity of $\rho_N$ on $\overline{B_{R_N}}$ and the fact that $\rho_N=0$ on $\partial B_{R_N}$.
\end{remark}

\section{High-density regime}
\label{sec:high-density}

In this section, let $\{\kappa_N\}_{N\ge1}$ be a sequence
of positive numbers such that $\kappa_N\to0$. Set $R_N=\sqrt{\kappa_N N}$
and assume that $R_N\to\infty$.
Let $\{u_j^{(R_N)}\}_{j=1}^N$ and $\rho_{N,R_N}$ be defined by
\eqref{u_j^(R)} and~\eqref{rho_NR}, respectively. Throughout this section,  we write
$u_j:=u_j^{(R_N)}$ and $\rho_N:=\rho_{N,R_N}$.

Define the sequence $\{\beta_N\}_{N\ge1}$ by
\[
 \beta_N=\ln\Big(1+\frac{4}{\kappa_N}\Big).
\]
Then $\beta_N\to\infty$. 
 The profile in \eqref{eq:rho-infty-def} suggests 
$\rho_\infty(\kappa_N)=\beta_N/(4\pi)$. We therefore study
$\rho_N/\beta_N$. 

We first establish an upper bound on $\rho_N/\beta_N$ which relies only
on the monotonicity $\lambda_j(R_N)\le\lambda_N(R_N)$ for $j\le N$ and on the optimized scalar estimate in Lemma~\ref{lem:optimized-Phi}.

\begin{lemma}[Uniform high-density upper bound]
\label{lem:uniform-upper}
The following estimate holds
\begin{equation} \label{eq:uniform-upper-claim}
\limsup_{N\to\infty} \left(\frac{1}{\beta_N} \sup_{x\in\R^2}\ \rho_N(x)\right)  \leq \frac{1}{4\pi}.
\end{equation}
\end{lemma}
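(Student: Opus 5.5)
We argue as in the proof of Lemma~\ref{lem:uniform-rhoN}, but with $t$ optimized rather than fixed at $t=1/2$. Apply Lemma~\ref{lem:bessel} to the Dirichlet operator $\cA_{R_N}=-\Delta+1$ on $B_{R_N}$ and its normalized eigenfunctions $u_j$. Using $\lambda_j(R_N)\le\lambda_N(R_N)$ for $j\le N$, together with \eqref{eq:Q-dirichlet}, we get for every $t>0$ and a.e.\ $x\in B_{R_N}$
\[
e^{-2t\lambda_N(R_N)}\rho_N(x)\le Q_t^{R_N}(x)\le \frac{1}{4\pi}\Ei(2t)\le\frac{1}{4\pi}\ln\Big(1+\frac{1}{2t}\Big).
\]
By continuity of $\rho_N$ (Lemma~\ref{lem:dirichlet-family}), this holds for every $x\in\R^2$. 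Hence, with $\Lambda_N:=\lambda_N(R_N)$ and $s=2t$,
\[
\sup_{x\in\R^2}\rho_N(x)\le\frac{1}{4\pi}\inf_{s>0}\Phi_{\Lambda_N}(s),\qquad \Phi_\Lambda(s):=e^{s\Lambda}\ln\Big(1+\frac1s\Big).
\]
We presume that Lemma~\ref{lem:optimized-Phi} is exactly an estimate for $\inf_s\Phi_\Lambda(s)$ as $\Lambda\to\infty$.

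Next we determine the size of $\Lambda_N$. By \eqref{eq:weyl-ball}, $\mu_N=4N(1+o(1))$, so
\[
\Lambda_N=1+\frac{\mu_N}{\kappa_N N}=1+\frac{4}{\kappa_N}(1+o(1)).
\]
Since $\kappa_N\to0$, we have $\Lambda_N\to\infty$. Moreover $\ln\Lambda_N=\ln(4/\kappa_N)+o(1)$, while $\beta_N=\ln(1+4/\kappa_N)=\ln(4/\kappa_N)+o(1)$. Therefore $\ln\Lambda_N/\beta_N\to1$.

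The key step, and the place where Lemma~\ref{lem:optimized-Phi} should enter, is the scalar estimate
\[
\inf_{s>0}e^{s\Lambda}\ln\Big(1+\frac1s\Big)\le\ln\Lambda+O(\ln\ln\Lambda)\qquad(\Lambda\to\infty).
\]
If that lemma is not available in this form, we would prove it directly by choosing $s=1/\Lambda$. This gives $e\ln(1+\Lambda)$, which has the wrong constant $e$, so a better choice is needed. Taking $s=\epsilon/\Lambda$ with $\epsilon$ fixed gives
\[
e^{\epsilon}\ln\Big(1+\frac{\Lambda}{\epsilon}\Big)=e^{\epsilon}\big(\ln\Lambda+O_\epsilon(1)\big).
\]
Hence
\[
\limsup_{\Lambda\to\infty}\frac{\inf_s\Phi_\Lambda(s)}{\ln\Lambda}\le e^{\epsilon}\quad\text{for every }\epsilon>0,
\]
so this limsup is at most $1$. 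Alternatively, one can take $s=1/(\Lambda\ln\Lambda)$ to get $(1+o(1))\ln\Lambda$ directly. We expect this optimization to be the only delicate point, since a fixed $t$ as in Lemma~\ref{lem:uniform-rhoN} loses the sharp constant.

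Combining the three steps,
\[
\frac{1}{\beta_N}\sup_{x\in\R^2}\rho_N(x)\le\frac{1}{4\pi}\cdot\frac{\inf_s\Phi_{\Lambda_N}(s)}{\ln\Lambda_N}\cdot\frac{\ln\Lambda_N}{\beta_N}.
\]
Taking the limsup, the second factor has limsup at most $1$ and the third tends to $1$, which yields \eqref{eq:uniform-upper-claim}. Note that the argument uses neither $R_N\to\infty$ nor any local Weyl law; it needs only the global Weyl asymptotics for $\mu_N$.
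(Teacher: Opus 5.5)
Your proposal is correct and follows the paper's proof essentially step for step. The shared steps are the Bessel/Dirichlet heat-kernel bound $e^{-2t\lambda_N(R_N)}\rho_N\le\frac{1}{4\pi}\ln(1+\frac{1}{2t})$, extended to all $x$ by continuity, then optimization in $t$, and $\ln\lambda_N(R_N)=\beta_N+o(1)$ from Weyl's law. Your alternative choice $s=1/(\Lambda\ln\Lambda)$ is exactly the paper's choice $t=(2\tau\ln\tau)^{-1}$ in Lemma~\ref{lem:optimized-Phi}, and your fixed-$\epsilon$ variant $s=\epsilon/\Lambda$ with $\epsilon\downarrow0$ works equally well.
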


\begin{proof}
Let $t>0$. In view of the proof of Lemma~\ref{lem:uniform-rhoN},
since $\lambda_j(R_N)$ is non-decreasing in $j$,
$e^{-2t\lambda_j(R_N)}\ge e^{-2t\lambda_N(R_N)}$ for $j\le N$, hence
\[
e^{-2t\lambda_N(R_N)}\rho_N(x)
\ \le\
\sum_{j=1}^N e^{-2t\lambda_j(R_N)}|u_j(x)|^2
\ \le\
\frac{1}{4\pi}\ln\Big(1+\frac{1}{2t}\Big)
\quad\text{for a.e. }x\in\R^2.
\]
 For each fixed $t>0$, continuity of $\rho_N$ extends its
a.e. bound by a constant to every $x\in\R^2$.
The bound therefore holds for every pair $(x,t)$, so taking the
infimum over $t>0$ gives, for every $N\ge1$,
\begin{equation} \label{eq:rho-Phi-bound}
\rho_N(x)  \leq \frac{1}{4\pi}\,\Phi(\lambda_N(R_N))
\quad\text{for every }x\in\R^2,
\end{equation}
with $\Phi$ defined in \eqref{def:Phi}. By Weyl's
law~\eqref{eq:weyl-ball} and the relation $R_N^2=\kappa_N N$,
 \[
\lambda_N(R_N)=1+\frac{\mu_N}{R_N^2}
= 1+\frac{4}{\kappa_N}(1+\eta_N)
 \to \infty \quad \text{as } N \to \infty.
\]
Here the sequence $\{\eta_N\}_{N\ge1}$ is defined by
$\mu_N=4N(1+\eta_N)$ and satisfies $\eta_N \to 0$ as $N \to \infty$.
Hence
\[
\ln(\lambda_N(R_N)) - \beta_N
=
\ln\Big(1+\frac{4}{\kappa_N}(1+\eta_N)\Big)-\ln\Big(1+\frac{4}{\kappa_N}\Big)
=
\ln\Big(1+\frac{4\eta_N}{\kappa_N+4}\Big)
\to 0 \quad \text{as } N \to \infty,
\]
which implies $\ln\lambda_N(R_N)=\beta_N+o(1)_{N \to \infty}$. Since
$\beta_N\to\infty$, we have
$\ln \lambda_N(R_N)/\beta_N\to1$. From Lemma~\ref{lem:optimized-Phi}, we have
\[
 \frac{\ln\ln \lambda_N(R_N)}{\beta_N}\to  0
 \quad \text{ and } \quad
 \Phi(\lambda_N(R_N))\le\beta_N(1+o(1)_{N \to \infty}).
\]
Together with
\eqref{eq:rho-Phi-bound}, this yields
$\operatorname*{ess\,sup}_{x\in\R^2}\rho_N(x)
\ \le\ \frac{\beta_N}{4\pi}\,(1+o(1)_{N \to \infty})$.
Since $\rho_N$ is continuous by Lemma~\ref{lem:dirichlet-family}, its
supremum equals its essential supremum.
This proves~\eqref{eq:uniform-upper-claim}.
\end{proof}
\begin{remark}
The assumption $R_N \to \infty$ is not needed in the proof of Lemma \ref{lem:uniform-upper}.
\end{remark}

We next establish the matching limit.

\begin{lemma}[Averaged mass limit]
 \label{lem:normalized-mass-limit}
The following limit holds
 \begin{equation}
\lim_{N \to \infty}\frac{1}{\beta_N |B_{R_N}|}\int_{\R^2}\rho_N(x)\,dx = \frac{1}{4\pi}.
\label{eq:normalized-mass-highdensity}
\end{equation}
\end{lemma}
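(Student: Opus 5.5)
By the mass identity \eqref{eq:rhoN-dirichlet} and $|B_{R_N}|=\pi R_N^2=\pi\kappa_N N$, the quantity in \eqref{eq:normalized-mass-highdensity} equals
\[
\frac{1}{\pi\beta_N}\cdot\frac{1}{N}\sum_{j=1}^N\frac{1}{\kappa_N+\mu_j/N},
\]
since $\lambda_j(R_N)^{-1}=\kappa_N/(\kappa_N+\mu_j/N)$. The proof is therefore purely spectral. We must show that $\frac1N\sum_{j=1}^N(\kappa_N+\mu_j/N)^{-1}=\frac14\beta_N(1+o(1))$, which is a Riemann-sum analogue of $\int_0^1\frac{dt}{\kappa_N+4t}=\frac14\ln(1+4/\kappa_N)=\beta_N/4$. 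The difficulty compared with Lemma~\ref{lem:density-family} is that $\kappa_N\to0$. The integrand then degenerates near $t=0$, and the sum is no longer a fixed-function Riemann sum. However, we only need agreement up to a factor $1+o(1)$ relative to the divergent quantity $\beta_N$.

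Fix $\eta\in(0,4)$ and use Weyl's law \eqref{eq:weyl-ball} to choose $J_0=J_0(\eta)$ with $(4-\eta)j\le\mu_j\le(4+\eta)j$ for $j\ge J_0$. Split the sum at $J_0$.

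\emph{Head $j<J_0$.} Each term satisfies $0\le(\kappa_N+\mu_j/N)^{-1}\le N/\mu_1$. The head therefore contributes at most $(J_0-1)/\mu_1=O_\eta(1)$ to $\frac1N\sum$, which is $o(\beta_N)$ because $\beta_N\to\infty$.

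\emph{Tail $J_0\le j\le N$.} Compare with $\frac1N\sum_{j}\bigl(\kappa_N+(4\pm\eta)j/N\bigr)^{-1}$. The function $t\mapsto(\kappa_N+ct)^{-1}$ is decreasing, so monotone sum–integral comparison gives
\[
\int_{J_0/N}^{(N+1)/N}\frac{dt}{\kappa_N+ct}\le\frac1N\sum_{j=J_0}^N\frac{1}{\kappa_N+cj/N}\le\int_{(J_0-1)/N}^{1}\frac{dt}{\kappa_N+ct}.
\]
Hence the tail lies between $\frac{1}{c}\ln\frac{\kappa_N+c(N+1)/N}{\kappa_N+cJ_0/N}$ with $c=4+\eta$, and $\frac1c\ln\frac{\kappa_N+c}{\kappa_N+c(J_0-1)/N}$ with $c=4-\eta$.

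The main obstacle is ensuring that the truncation at $J_0/N$ does not destroy the logarithm. The upper bound is at most $\frac1c\ln(1+c/\kappa_N)\le\frac1c(\beta_N+O(1))$, which is harmless. The lower bound is $\frac1c\bigl[\ln(\kappa_N+c)-\ln(\kappa_N+cJ_0/N)\bigr]$, which could be much smaller than $\beta_N$ if $J_0/N\gg\kappa_N$. The hypothesis $R_N^2=\kappa_N N\to\infty$ is exactly what rules this out: it gives $J_0/N=o(\kappa_N)$, so $\ln(\kappa_N+cJ_0/N)=\ln\kappa_N+o(1)$, and the lower bound is $\frac1c(\beta_N-o(1)-O(1))$. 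In this step I would be careful to write $\ln(1+c/\kappa_N)=\beta_N+O_\eta(1)$, using $\ln(1+c/\kappa)-\ln(1+4/\kappa)\to\ln(c/4)$ as $\kappa\to0$.

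Dividing by $\beta_N\to\infty$ gives
\[
\frac{1}{4+\eta}\le\liminf\frac{1}{N\beta_N}\sum_j\lambda_j(R_N)^{-1}\kappa_N^{-1}\le\limsup\le\frac1{4-\eta}.
\]
Letting $\eta\downarrow0$ and multiplying by $1/\pi$ yields the limit $1/(4\pi)$, which is \eqref{eq:normalized-mass-highdensity}.
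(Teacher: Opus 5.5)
Your proof is correct, and it differs from the paper's in how the upper bound is obtained.

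The lower bound is essentially the paper's argument. Both use the upper Weyl bound $\mu_j\le(4+\eta)j$ for $j\ge J_0$ and a monotone sum--integral comparison. Both also use the growth of $R_N^2$ to neutralize the truncation at $J_0$. The paper phrases this as $J_0/(Ng(\kappa_N))\to0$ with $Ng(\kappa_N)=R_N^2\beta_N/4$; you phrase it as $J_0/N=o(\kappa_N)$ inside the logarithm.

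The upper bound is where you depart. The paper never estimates the eigenvalue sum from above. It bounds the average of $\rho_N$ over $B_{R_N}$ by $\sup\rho_N$ and then invokes Lemma~\ref{lem:uniform-upper}. That lemma gives the pointwise bound $\sup\rho_N\le\frac{\beta_N}{4\pi}(1+o(1))$, obtained from Bessel's inequality, the Dirichlet heat-kernel domination, and the optimized function $\Phi$ of Lemma~\ref{lem:optimized-Phi}.

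You instead rerun the Riemann-sum comparison with the lower Weyl bound $\mu_j\ge(4-\eta)j$. You control the head $j<J_0$ by $\mu_1>0$, which costs $O_\eta(1)=o(\beta_N)$.

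What each route buys:
\begin{itemize}
\item Your argument is self-contained and purely spectral. It uses only Weyl asymptotics for the eigenvalues and no information about eigenfunctions, so the lemma no longer depends on the heat-semigroup machinery.
\item The paper's choice is more economical in context. The pointwise upper bound is needed anyway for \eqref{eq:Lp-highdensity} and \eqref{eq:sup-rho-limit}, since the $L^1$ convergence is deduced from the positive-part identity, which needs a sup bound matching the mean. Reusing it costs nothing, and it shows the mass upper bound is a consequence of a sharper pointwise fact.
\end{itemize}
Neither upper bound needs $R_N\to\infty$; only $\beta_N\to\infty$ is used there.

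One small overstatement: you say $R_N\to\infty$ is ``exactly'' what rules out losing the logarithm in the lower bound. It is sufficient but not necessary, since $\ln(1+cJ_0/R_N^2)=o(\beta_N)$ is all your argument uses.
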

\begin{proof}
We first derive an upper bound. Since $\rho_N\ge0$ vanishes outside $B_{R_N}$, we have  $\frac{1}{|B_{R_N}|}\int_{\R^2}\rho_N\,dx \leq \sup_{x\in\R^2}\rho_N(x)$. Hence by Lemma~\ref{lem:uniform-upper}, we obtain
\begin{equation} \label{limsup:rho_N}
\limsup_{N \to \infty} \frac{1}{\beta_N|B_{R_N}|}\int_{\R^2}\rho_N\,dx \leq \frac{1}{4\pi}.
\end{equation}

Next, we show a lower bound. As in the proof of Lemma~\ref{lem:density-family},
we write
 \[
 \frac{1}{N}\int_{\R^2}\rho_N\,dx
 =\frac{1}N\sum_{j=1}^N h_{N,j}
 \quad \text{where} \quad
 h_{N,j}:=\frac{\kappa_N N}{\mu_j+\kappa_N N}.
\]
Fix
$\varepsilon\in(0,4)$. By Weyl's law~\eqref{eq:weyl-ball} there is
$J_0=J_0(\varepsilon)$ with  $\mu_j \leq (4+\varepsilon)j$
for all $j\ge J_0$. For each $N$, set
\[
 \phi_{\varepsilon,N}(t)
 :=\frac{\kappa_N}{(4+\varepsilon)t+\kappa_N},
 \quad 0\le t\le1.
\]
Then $h_{N,j}\ge\phi_{\varepsilon,N}(j/N)$ whenever $j\ge J_0$.
Consequently, for $N \geq J_0$, we have
\[
 \frac{1}N\sum_{j=1}^N h_{N,j}
 \ge \frac{1}N\sum_{j=J_0}^N\phi_{\varepsilon,N}(j/N).
\]
Since $\phi_{\varepsilon,N}$ is decreasing on $[0,1]$ with values in $[0,1]$,
the standard monotone Riemann-sum estimate  gives
\[
\frac{1}N\sum_{j=1}^N\phi_{\varepsilon,N}(j/N)
\ \ge\
\int_0^1\phi_{\varepsilon,N}(x)\,dx-\frac{1}N
=
g_\varepsilon(\kappa_N)-\frac{1}N,
\quad \text{where }
g_\varepsilon(\kappa):=\frac{\kappa}{4+\varepsilon}\ln\Big(1+\frac{4+\varepsilon}{\kappa}\Big).
\]
The two preceding estimates and the fact that
$\phi_{\varepsilon,N}\le1$ imply
$\frac{1}N\sum_{j=1}^N h_{N,j}
\ \ge\ g_\varepsilon(\kappa_N)-\frac{J_0}N$.
Hence
\begin{equation} \label{est:lower-mass}
\frac{1}{Ng(\kappa_N)}\int_{\R^2}\rho_N\,dx
\ \ge\
\frac{g_\varepsilon(\kappa_N)}{g(\kappa_N)}
-\frac{J_0}{Ng(\kappa_N)}.
\end{equation}
Note that $\kappa_N\to0$ and
\[
 Ng(\kappa_N)
 =\frac{R_N^2}{4}\ln\Big(1+\frac{4}{\kappa_N}\Big)
 =\frac{R_N^2\beta_N}{4}\to  \infty.
\]
Moreover, $\lim_{\kappa\downarrow0}\frac{g_\varepsilon(\kappa)}{g(\kappa)}
 =\frac{4}{4+\varepsilon}$.
Therefore, by letting $N\to\infty$ in \eqref{est:lower-mass}, we obtain 
\[
\liminf_{N\to\infty}\frac{1}{Ng(\kappa_N)}\int_{\R^2}\rho_N\,dx\ \ge\ \frac{4}{4+\varepsilon}.
\]
Letting $\varepsilon\to0$ and using
$\beta_N|B_{R_N}|=4\pi Ng(\kappa_N)$, we get
\[
\liminf_{N \to \infty} \frac{1}{\beta_N|B_{R_N}|}\int_{\R^2} \rho_N\,dx \geq \frac{1}{4\pi}.
\]
This and the upper bound~\eqref{limsup:rho_N} imply
 \eqref{eq:normalized-mass-highdensity}.
\end{proof}

Interestingly, in contrast to the intermediate- and low-density regimes, both  the mass and energy per state converge to zero as $N \to \infty$ in the high-density regime, as stated in the following result.

\begin{proposition} \label{prop:converge-zero}
The following limits hold
\begin{align}
 \lim_{N\to\infty}\frac1N\int_{\R^2}\rho_{N,R_N}(x)\,dx
 &=0,\label{eq:high-density-mass}\\
 \lim_{N\to\infty}\frac1N\int_{\R^2}
 \Theta_{4\pi}(\rho_{N,R_N}(x))\,dx
 &=0.\label{eq:high-density-energy}
\end{align}
\end{proposition}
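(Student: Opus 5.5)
For the mass limit \eqref{eq:high-density-mass} I would use Lemma~\ref{lem:normalized-mass-limit} together with the identity $\beta_N|B_{R_N}|=4\pi N g(\kappa_N)$ recorded in its proof. These give
\[
\frac1N\int_{\R^2}\rho_N\,dx=\frac{\beta_N|B_{R_N}|}{N}\cdot\frac{1}{\beta_N|B_{R_N}|}\int_{\R^2}\rho_N\,dx
= 4\pi g(\kappa_N)\cdot\frac{1}{\beta_N|B_{R_N}|}\int_{\R^2}\rho_N\,dx .
\]
The second factor tends to $1/(4\pi)$. Since $g(\kappa)=\frac{\kappa}{4}\ln(1+4/\kappa)\to0$ as $\kappa\downarrow0$ and $\kappa_N\to0$, the product tends to $0$. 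A direct alternative is to use the mass identity \eqref{eq:rhoN-dirichlet} together with the upper Weyl comparison $\lambda_j(R_N)^{-1}\le h^-_{\eta,N}(j/N)$ from Lemma~\ref{lem:density-family}. This bounds the mass per state by $J_0/N+\frac{\kappa_N}{4-\eta}\ln\bigl(1+\frac{4-\eta}{\kappa_N}\bigr)\to0$.

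For the energy limit \eqref{eq:high-density-energy} I would combine this with the uniform upper bound of Lemma~\ref{lem:uniform-upper}. For large $N$ it gives $\rho_N\le M_N:=\frac{\beta_N}{4\pi}(1+o(1))$ everywhere. Since $\Theta_{4\pi}(s)/s=(e^{4\pi s}-1-4\pi s)/s^2$ is nondecreasing in $s$ (its power series has nonnegative coefficients), we get
\[
\frac1N\int_{\R^2}\Theta_{4\pi}(\rho_N)\,dx\le \frac{\Theta_{4\pi}(M_N)}{M_N}\cdot\frac1N\int_{\R^2}\rho_N\,dx .
\]
Next I would estimate each factor. We have $4\pi M_N=\beta_N(1+o(1))$ and $e^{\beta_N}=1+4/\kappa_N$, so $\Theta_{4\pi}(M_N)/M_N\lesssim e^{\beta_N(1+o(1))}/\beta_N^2$. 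By the previous step, the mass per state is about $\frac{\kappa_N\beta_N}{4}$. The product is therefore of order $\kappa_N e^{\beta_N(1+o(1))}/\beta_N$.

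This is where the main obstacle lies. The $o(1)$ in the exponent is multiplied by $\beta_N\to\infty$, so $e^{o(1)\beta_N}$ need not stay bounded. The crude bound then gives roughly $e^{o(\beta_N)}/\beta_N$, which need not vanish. I would first try to show that the $o(1)$ in Lemma~\ref{lem:uniform-upper} is in fact $O(\ln\beta_N/\beta_N)$. The bound there comes from $\Phi(\lambda_N(R_N))$, and $\ln\lambda_N(R_N)=\beta_N+o(1)$ additively. If Lemma~\ref{lem:optimized-Phi} gives $\Phi(\lambda)\le\ln\lambda+c\ln\ln\lambda+C$, then $e^{4\pi M_N}\lesssim \beta_N^{c}/\kappa_N$. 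The energy per state is then $O(\beta_N^{c-1})$, which tends to zero only if $c<1$. That constant must therefore be checked.

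If this fails, I would instead split $\R^2$ into the set $\{\rho_N\le(1-\theta)\beta_N/(4\pi)\}$ and its complement. On the first set the same monotonicity argument gives a contribution of at most $C\kappa_N^{\theta}$, up to logarithmic factors. On the second set I would bound the contribution using its measure. Here I would use the averaged information: the near-saturation of the mean in Lemma~\ref{lem:normalized-mass-limit}, combined with the sup bound, controls $|\{\rho_N>(1-\theta)\beta_N/4\pi\}|\le|B_{R_N}|$. The resulting estimate $|B_{R_N}|\,\Theta_{4\pi}(M_N)/N\approx \pi\kappa_N e^{\beta_N(1+o(1))}/\beta_N$ should be refined with the $\ln\ln$ correction from Lemma~\ref{lem:optimized-Phi} to obtain $o(1)$.
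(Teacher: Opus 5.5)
Your mass argument for \eqref{eq:high-density-mass} is correct and is the paper's argument: Lemma~\ref{lem:normalized-mass-limit} combined with $|B_{R_N}|=\pi\kappa_N N$. The Riemann-sum alternative also works.

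The energy limit, however, is not proved. You correctly locate the obstacle, but neither of your two routes gets past it. Both rest on the sup bound of Lemma~\ref{lem:uniform-upper}, i.e.\ on $4\pi\sup\rho_N\le\Phi(\lambda_N(R_N))$. The $\ln\ln$ loss there is intrinsic to $\Phi$, not an artefact of the choice of $t$ in Lemma~\ref{lem:optimized-Phi}. Writing $s=2t\tau$, one has $\Phi(\tau)=\inf_{s>0}e^{s}\ln(1+\tau/s)$; the infimum is attained near $s=1/\ln\tau$, and $\Phi(\tau)=\ln\tau+\ln\ln\tau+O(1)$. So $c=1$ in your notation, and your own computation gives only $O(\beta_N^{c-1})=O(1)$. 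Concretely, $e^{4\pi M_N}\approx C\lambda_N(R_N)\ln\lambda_N(R_N)$, and your bound is of order $\kappa_N\lambda_N(R_N)\to4$.

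The level-set fallback cannot help either, because the near-saturated set is not small. Lemmas~\ref{lem:normalized-mass-limit} and~\ref{lem:uniform-upper} give $\rho_N/\beta_N\to1/(4\pi)$ in averaged $L^1$. Hence for fixed $\theta$ the set $\{\rho_N>(1-\theta)\beta_N/(4\pi)\}$ fills asymptotically all of $B_{R_N}$. Bounding its measure by $|B_{R_N}|$ therefore loses nothing, and the $\ln\ln$ correction makes its contribution $O(1)$, not $o(1)$ as you hope.

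What is missing is a pointwise bound with only an additive $O(1)$ error: $4\pi\sup\rho_N\le\ln\lambda_N(R_N)+O(1)$, i.e.\ $e^{4\pi\sup\rho_N}=O(1/\kappa_N)$ instead of $O(\beta_N/\kappa_N)$. With it, the crude estimate $\frac1N\int_{\R^2}\Theta_{4\pi}(\rho_N)\,dx\le\pi\kappa_N\Theta_{4\pi}(\sup\rho_N)$ is already $O(1/\beta_N)$, and no splitting is needed.

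The paper gets this bound by not paying the factor $e^{2t\lambda_N}$ multiplicatively. Abbreviate $\lambda_j=\lambda_j(R_N)$ and write
$\rho_N=\sum_j(1-e^{-\lambda_j/\lambda_N})|u_j|^2+\sum_je^{-\lambda_j/\lambda_N}|u_j|^2$.
\begin{itemize}
\item The second sum is at most $Q^{R_N}_{1/(2\lambda_N)}\le\frac{1}{4\pi}\ln(1+\lambda_N)$, by Lemma~\ref{lem:bessel} and \eqref{eq:Q-dirichlet}.
\item For the first sum, $1-e^{-s}\le s$ bounds it by $\lambda_N^{-1}P_N$, where $P_N=\sum_j\lambda_j|u_j|^2$.
\item Since $\lambda_j\le\lambda_N$, one has $P_N\le e\sum_je^{-\lambda_j/\lambda_N}\lambda_j|u_j|^2$. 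The $\lambda_j^{1/2}u_j$ are $L^2(B_{R_N})$-orthonormal Dirichlet eigenfunctions, so this is at most $e$ times the diagonal of the Dirichlet heat kernel of $\cA_{R_N}$ at time $1/\lambda_N$. Lemma~\ref{lem:dirichlet-monotone} then gives $P_N\le e\lambda_N/(4\pi)$.
\end{itemize}
Hence $4\pi\rho_N\le e+\ln(1+\lambda_N)$ everywhere. The energy per state is then at most $4\pi^2e^e\kappa_N(1+\lambda_N)/\bigl(e+\ln(1+\lambda_N)\bigr)$, which tends to $0$ because $\kappa_N(1+\lambda_N)=2\kappa_N+\mu_N/N\to4$ while $\ln(1+\lambda_N)\to\infty$.
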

\begin{proof}
We first prove~\eqref{eq:high-density-mass}.
By Lemma~\ref{lem:normalized-mass-limit} and
$|B_{R_N}|=\pi\kappa_N N$,
\[
 \frac1N\int_{\R^2}\rho_N(x)\,dx
 =\pi\kappa_N\beta_N
   \frac1{\beta_N|B_{R_N}|}\int_{\R^2}\rho_N(x)\,dx
 =\frac{\kappa_N\beta_N}{4}(1+o(1)_{N \to \infty}) \to 0,
\]
since $\kappa_N\ln(1+4/\kappa_N) \to 0$.
This implies~\eqref{eq:high-density-mass}.

To prove~\eqref{eq:high-density-energy}, we put
\[
 P_N(x):=\sum_{j=1}^N\lambda_j(R_N)|u_j(x)|^2.
\]
The functions $\{\lambda_j(R_N)^{1/2}u_j\}_{j=1}^N$ are $L^2(B_{R_N})$-orthonormal
Dirichlet eigenfunctions. This finite orthonormal set can be extended to a complete orthonormal eigenbasis of $L^2(B_{R_N})$, even if the selected functions span only a proper subspace of the eigenspace associated with the maximal eigenvalue $\lambda_N(R_N)$.
Since $\lambda_j(R_N) \le \lambda_N(R_N)$ for $j\le N$, the spectral
expansion and Lemma~\ref{lem:dirichlet-monotone} give, for a.e. $x\in B_{R_N}$,
\[
 P_N(x)
 \le e\sum_{j=1}^N e^{-\frac{\lambda_j(R_N)}{\lambda_N(R_N)}}
                    \lambda_j(R_N)|u_j(x)|^2
 \le e^{1-\frac{1}{\lambda_N(R_N)}}
       k_{-\Delta}^{B_{R_N}}(\lambda_N(R_N)^{-1},x,x)
 \le\frac{e\lambda_N(R_N)}{4\pi}.
\]
Using $1-e^{-s}\le s$ for $s\ge0$ and the Dirichlet diagonal
bound~\eqref{eq:Q-dirichlet}, we obtain
\begin{align*}
 \rho_N(x)
 &=\sum_{j=1}^N
       (1-e^{-\frac{\lambda_j(R_N)}{\lambda_N(R_N)}})|u_j(x)|^2
   +\sum_{j=1}^N e^{-\frac{\lambda_j(R_N)}{\lambda_N(R_N)} }|u_j(x)|^2\\
 &\le\frac{P_N(x)}{\lambda_N(R_N)}
       +Q_{1/(2\lambda_N(R_N))}^{R_N}(x)
 \le\frac{e+\ln(1+\lambda_N(R_N))}{4\pi}.
\end{align*}
By the continuity of $\rho_N$ and the fact that $\rho_N = 0$ in $B_{R_N}^c$, we derive that
$0 \le \rho_N(x)\le \frac{e+\ln(1+\lambda_N(R_N))}{4\pi}$ for every $x\in\R^2$.
Since $\Theta_{4\pi}$ is increasing on $[0,\infty)$, vanishes at zero,
and satisfies $\Theta_{4\pi}(s)\le e^{4\pi s}/s$ for $s>0$,
\begin{equation} \label{est:energy-per-state}
 0\le\frac1N\int_{\R^2}\Theta_{4\pi}(\rho_N(x))\,dx
 \le\pi\kappa_N\Theta_{4\pi}\left(\frac{e+\ln(1+\lambda_N(R_N))}{4\pi}\right)
 \le\frac{4\pi^2e^e\kappa_N(1+\lambda_N(R_N))}
          {e+\ln(1+\lambda_N(R_N))}.
\end{equation}
Finally, \eqref{eq:lambdaj} and Weyl's law~\eqref{eq:weyl-ball} imply
\[
 \lambda_N(R_N)=1+\frac{\mu_N}{\kappa_N N}\to\infty,
 \qquad
 \kappa_N(1+\lambda_N(R_N))=2\kappa_N+\frac{\mu_N}{N}\to4.
\]
Therefore, letting $N \to \infty$ in \eqref{est:energy-per-state} yields
\eqref{eq:high-density-energy}. The proof is complete.
\end{proof}
\begin{remark}
Set
\[
 \overline\rho_N:=\frac{1}{|B_{R_N}|}
 \int_{B_{R_N}}\rho_{N,R_N}(x)\,dx.
\]
From the normalized mass limit~\eqref{eq:normalized-mass-highdensity}, we see that $\overline\rho_N/\beta_N\to1/(4\pi)$. Since
$\beta_N=\ln(1+4/\kappa_N)\to\infty$, we have
$\overline\rho_N\to\infty$ as $N \to \infty$.
By Jensen's inequality, we derive
\[
\begin{aligned}
 \frac{\int_{\R^2}\Theta_{4\pi}(\rho_{N,R_N}(x))\,dx}
      {\int_{\R^2}\rho_{N,R_N}(x)\,dx}
 \ge\frac{\Theta_{4\pi}(\overline\rho_N)}{\overline\rho_N}
 =\frac{e^{4\pi\overline\rho_N}-1-4\pi\overline\rho_N}
       {\overline\rho_N^2}\to\infty \quad \text{as } N \to \infty.
\end{aligned}
\]
Thus in the high-density regime, estimate \eqref{eq:false-uniform-mass} fails, even though the mass and energy per state asymptotically vanish.
\end{remark}

With the normalized density $\rho_N/\beta_N$, we can show the convergence of the averaged mass and energy in the high-density regime.

\begin{proof}[\textbf{Proof of Theorem~\ref{thm:intro-high-density}}]

First we prove \eqref{eq:Lp-highdensity}. For  $h\in L^1(B_{R_N})$, define
$$\langle h\rangle_N:=|B_{R_N}|^{-1}\int_{B_{R_N}}h\,dx.
$$ 

 By Lemma~ \ref{lem:normalized-mass-limit}, $\langle \frac{\rho_N}{\beta_N} \rangle_N \to \frac{1}{4\pi}$ and by
Lemma~\ref{lem:uniform-upper}, 
\[
 \limsup_{N\to\infty}\sup_{x\in\R^2}
 \frac{\rho_N(x)}{\beta_N}\le\frac{1}{4\pi}.
\]
Using the positive-part identity and the preceding mass limit \eqref{eq:normalized-mass-highdensity}, we obtain
\begin{align*}
 \left\langle\left|\frac{\rho_N}{\beta_N}-\frac{1}{4\pi}\right|\right\rangle_N
 &=2\left\langle\left(\frac{\rho_N}{\beta_N}-\frac{1}{4\pi}\right)_+\right\rangle_N
   -\left(\left\langle\frac{\rho_N}{\beta_N}\right\rangle_N-\frac{1}{4\pi}\right)\\
 &\le 2\left(\sup_{x\in\R^2}\frac{\rho_N(x)}{\beta_N}-\frac{1}{4\pi}\right)_+
   +\left|\left\langle\frac{\rho_N}{\beta_N}\right\rangle_N-\frac{1}{4\pi}\right| \to 0 \qquad\text{as }N\to\infty,
\end{align*}
which proves~\eqref{eq:Lp-highdensity} for $p=1$.

For all sufficiently large $N$, we have $0 \leq \frac{\rho_N}{\beta_N} \leq 1 + \frac{1}{4\pi}$. Thus, if
$1<p<\infty$,
\[
 \Big \langle \Big |\frac{\rho_N}{\beta_N} - \frac{1}{4\pi} \Big |^p \Big \rangle_N
 \leq \Big(1 + \frac{1}{4\pi}\Big)^{p-1}\Big \langle \Big |\frac{\rho_N}{\beta_N} -  \frac{1}{4\pi} \Big | \Big \rangle_N \to 0 \quad \text{as } N \to \infty,
\]
which proves~\eqref{eq:Lp-highdensity} for $p>1$.

Next we prove \eqref{eq:sup-rho-limit}. By Lemma~ \ref{lem:normalized-mass-limit},
\[
\liminf_{N\to\infty} \left( \frac{1}{\beta_N}\sup_{x\in\R^2}\rho_N(x) \right)
\ \ge\
\lim_{N\to\infty} \left(\frac{1}{\beta_N|B_{R_N}|}\int_{\R^2}\rho_N\,dx \right) = \frac{1}{4\pi}.
\]
This and Lemma~\ref{lem:uniform-upper} yield \eqref{eq:sup-rho-limit}.

Finally, note that $\Theta_{4\pi}$ is Lipschitz on the common bounded range of $\rho_N/\beta_N$ for all sufficiently large $N$. Thus \eqref{eq:Lp-highdensity} with $p=1$ and $\Theta_{4\pi}(1/(4\pi))=4\pi(e-2)$ yield \eqref{eq:intro-Theta-convergence}. 
\end{proof}


\appendix
 \section{Scalar endpoint inequality and comparisons}
\label{app:scalar-exact-growth}

In this appendix, we prove inequality \eqref{eq:scalar-exact-growth} and show its equivalence to~\eqref{eq:IMN-exact-growth-intro} and its implication for Ruf's inequality \eqref{eq:ruf}.

\subsection{Scalar endpoint inequality}
We first control the exterior mass of a radial function at a prescribed level.

\begin{lemma}
\label{lem:exterior-descent}
Let $w\in H^1(\R^2)$ be nonnegative, radial and radially non-increasing, and let $\omega$ be its nonnegative, locally absolutely continuous radial representative on $(0,\infty)$. Suppose that $\omega(R)=1$ for some $R>0$. Put
\[
 h_R=\int_{\{|x|>R\}}|\nabla w|^2\,dx.
\]
If $h_R\le1$, then $h_R>0$ and
\begin{equation}  \label{eq:exterior-descent}
 R^2h_R^2e^{\frac{4\pi}{h_R}}
 \le \frac{32\pi}{1-5e^{-2}}\int_{|x|>R}w^2\,dx.
\end{equation}
\end{lemma}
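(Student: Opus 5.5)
The plan is to work entirely with the radial profile $\omega$ and to turn the Dirichlet energy outside $B_R$ into a pointwise lower bound for $\omega$, via Cauchy--Schwarz in the logarithmic variable. Write $h_R=2\pi\int_R^\infty|\omega'(r)|^2\,r\,dr$. First I would show that $\omega(r)\to0$ as $r\to\infty$: $\omega$ is non-increasing and $w\in L^2(\R^2)$, and a positive limit would make $\int w^2=\infty$. This gives $h_R>0$ at once. If $h_R=0$, then $\omega'=0$ a.e.\ on $(R,\infty)$, so $\omega\equiv\omega(R)=1$ there by local absolute continuity, contradicting $w\in L^2$.

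Next, for $r>R$, local absolute continuity and Cauchy--Schwarz with the weight $r$ give
\[
1-\omega(r)=\int_R^r(-\omega'(s))\,ds\le\Big(\int_R^r|\omega'|^2s\,ds\Big)^{1/2}\Big(\ln\frac rR\Big)^{1/2}\le\Big(\frac{h_R}{2\pi}\ln\frac rR\Big)^{1/2}.
\]
Set $L_0:=2\pi/h_R$. Then on $R<r<Re^{L_0}$ we have $\omega(r)\ge 1-\sqrt{\ln(r/R)/L_0}\ge0$. Squaring this bound and integrating in polar coordinates gives
\[
\int_{|x|>R}w^2\,dx\ \ge\ 2\pi\int_R^{Re^{L_0}}\Big(1-\sqrt{\tfrac{\ln(r/R)}{L_0}}\Big)^2 r\,dr .
\]
I would then substitute $r=Re^{L_0 s}$, $s\in(0,1)$, which turns the right-hand side into $2\pi R^2L_0\int_0^1(1-\sqrt s)^2e^{2L_0 s}\,ds$.

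The remaining step is an elementary lower bound for this integral, and it is where the exponent $4\pi/h_R$ and the constant appear. Using $1-\sqrt s=(1-s)/(1+\sqrt s)\ge(1-s)/2$ and writing $u=1-s$, $a:=2L_0=4\pi/h_R$, I would get
\[
\int_0^1(1-\sqrt s)^2e^{as}\,ds\ \ge\ \frac{e^{a}}4\int_0^1u^2e^{-au}\,du=\frac{e^{a}}{4}\cdot\frac{2}{a^3}\Big(1-e^{-a}\big(1+a+\tfrac{a^2}{2}\big)\Big).
\]
The hypothesis $h_R\le1$ gives $a\ge4\pi>2$. Since $e^{-a}(1+a+a^2/2)$ is decreasing in $a$, the bracket is at least $1-5e^{-2}$. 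Combining the factors $2\pi R^2L_0\cdot\frac{e^a}{2a^3}(1-5e^{-2})$ with $L_0=2\pi/h_R$ and $a^3=64\pi^3/h_R^3$ yields $\int_{|x|>R}w^2\ge c\,R^2h_R^2e^{4\pi/h_R}$, with $c$ a universal multiple of $1-5e^{-2}$. This is \eqref{eq:exterior-descent} up to the numerical value of the constant.

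The main obstacle is bookkeeping the constant so that it equals exactly $(1-5e^{-2})/(32\pi)$. My crude inequality $(1-\sqrt s)^2\ge(1-s)^2/4$ loses roughly a factor $2$ relative to this. If the exact constant matters, I would sharpen that step: the integral is concentrated near $s=1$, where $(1+\sqrt s)^{-2}\approx1/4$, but $\omega$ could also be bounded below more carefully, or the truncation point $L_0$ enlarged. Since only the form of the estimate is used later, I would regard recovering $32\pi$ precisely as secondary.
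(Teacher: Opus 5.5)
Your proposal is correct and is essentially the paper's argument: the paper sets $\zeta(t)=\omega(Re^t)$ and derives the same bound $\zeta(t)\ge1-\sqrt{\nu t}$ with $\nu=h_R/(2\pi)$ and the same inequality $1-\sqrt{1-u}\ge u/2$. The only difference is that the paper integrates over the last unit interval $t\in[\nu^{-1}-1,\nu^{-1}]$, while you integrate over all of $[0,\nu^{-1}]$ and then use that $e^{-a}(1+a+a^2/2)$ is decreasing. Your worry about losing a factor $2$ is unfounded, because $2\pi R^2L_0\cdot\frac{e^a}{2a^3}(1-5e^{-2})$ with $L_0=2\pi/h_R$ and $a=4\pi/h_R$ equals exactly $\frac{1-5e^{-2}}{32\pi}R^2h_R^2e^{4\pi/h_R}$, so you already recover the stated constant.
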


\begin{proof}
By assumption, $\omega(R)=1$. Define
\[
 \zeta(t):=\omega(Re^t),\quad t\ge0,
 \qquad
 \nu:=\int_0^\infty |\zeta'(t)|^2\,dt.
\]
By the chain rule and the change of variables $r=Re^t$, we have
\begin{align} \label{eq:descent-mass-log}
 \nu
 =\int_R^\infty r|\omega'(r)|^2\,dr=\frac{h_R}{2\pi},
 \qquad
 \int_{|x|>R}w(x)^2\,dx
 =2\pi R^2\int_0^\infty \zeta(t)^2e^{2t}\,dt.
\end{align}
In particular, $\zeta(0)=1$.  If $\nu=0$, then $\zeta=1$ a.e.\ in
$(0,\infty)$, and~\eqref{eq:descent-mass-log} contradicts
$w\in L^2(\R^2)$.  Hence $\nu>0$, equivalently $h_R>0$.

If $h_R\le1$, then $0<\nu\le1/(2\pi)<1$. For every $t \ge 0$, by the absolute continuity of $\zeta$ and the Cauchy--Schwarz inequality, we obtain 
\begin{equation}
 \zeta(t)\ge \zeta(0)-\int_0^t|\zeta'(s)|\,ds
 \ge1-\sqrt{\nu t}.
 \label{eq:descent-CS}
\end{equation}
For $0\le\tau\le1$, set $t=\nu^{-1}-\tau$. Then $t\ge0$ because $\nu<1$.  Since $0\le \nu\tau\le1$, by estimate~\eqref{eq:descent-CS}, we find 
$\zeta(\nu^{-1}-\tau)\ge1-\sqrt{1-\nu\tau}\ge\frac{\nu\tau}{2}$.
Restricting~\eqref{eq:descent-mass-log} to
$[\nu^{-1}-1,\nu^{-1}]$ and substituting
$t=\nu^{-1}-\tau$, we obtain
\begin{align*}
 \int_{|x|>R}w^2\,dx
 \ge2\pi R^2\int_{\nu^{-1}-1}^{\nu^{-1}}\zeta(t)^2e^{2t}\,dt \ge\frac{\pi}{2}R^2\nu^2e^{2/\nu}
       \int_0^1\tau^2e^{-2\tau}\,d\tau
       =\frac{1-5e^{-2}}{32\pi}
        R^2h_R^2e^{\frac{4\pi}{h_R}}.
\end{align*}
The last equality is due to $h_R=2\pi\nu$ and 
$\int_0^1\tau^2e^{-2\tau}\,d\tau=(1-5e^{-2})/4$. 
This proves~\eqref{eq:exterior-descent}.
\end{proof}

We shall also use the following consequence of the classical Moser--Trudinger inequality, which asserts that there is a universal constant $C_{\rm MT}>0$ such that
\begin{equation}  \label{eq:ball-Moser-scaled}
 \int_{B_R}e^{4\pi |w|^2}\,dx\le C_{\rm MT}R^2
 \quad\text{for all } w \in H_0^1(B_R),\quad
 \|\nabla w \|_{L^2(B_R)} \leq 1.
\end{equation}
Indeed, the case $R=1$ is~\eqref{eq:classical-mt}, and the general case follows by dilation.

\begin{proof}[\textbf{Proof of Proposition~\ref{prop:scalar-exact-growth}}]
We divide the proof into the rearrangement reduction and the
high-amplitude estimate.

\smallskip
\noindent\textbf{Step 1:} We will use the rearrangement argument.
By the diamagnetic inequality, we have $|u|\in H^1(\R^2)$ and
$\|\nabla|u|\|_{L^2} \le\|\nabla u\|_{L^2}$.  Let $w=(|u|)^*$ be the symmetric
decreasing rearrangement of $|u|$.  By equimeasurability (see \cite[Subsection 3.3]{LiebLoss01}) and the
P\'olya--Szeg\H{o} inequality (see \cite{PolSze51}), we have
$\|w\|_{L^2}=\|u\|_{L^2}$, $\|\nabla w\|_{L^2} \leq \|\nabla u \|_{L^2} \leq 1$ and 
 $\int_{\R^2}\Theta_{4\pi}(w^2)\,dx=\int_{\R^2}\Theta_{4\pi}(|u|^2)\,dx$.
Therefore, it suffices to
prove~\eqref{eq:scalar-exact-growth} for $w$.

Write $w(x)=\omega(r)$ with $r=|x|$.  Since $w\in L^2(\R^2)$ and
$\omega$ is non-increasing, we have $\omega(r)\to0$ as $r \to \infty$.  
By using the power series expansion of $\Theta_{4\pi}$, we have, for $0 \leq s \leq 1$,
\begin{equation}
 \Theta_{4\pi}(s)\le C_\Theta s
 \quad \text{ where } C_\Theta:=\sum_{j=2}^\infty\frac{(4\pi)^j}{j!}
 =e^{4\pi}-1-4\pi.
 \label{eq:Theta-small-linear}
\end{equation}
If $w\le1$ a.e.\ in $\R^2$, then  \eqref{eq:scalar-exact-growth} follows from
\eqref{eq:Theta-small-linear}.  Therefore, we may 
assume that $\{w>1\}$ has positive measure and define
$R:=\sup\{r>0:\omega(r)>1\}$. 
Then $R>0$.  Also from the definition of $R$, we have $R \leq \frac{\| w\|_{L^2}}{\sqrt{\pi}}<\infty$.  We infer from the local continuity and the monotonicity of $\omega$ that
\begin{equation}  \label{eq:unit-superlevel-ball}
 \omega(R)=1,\quad \omega(r)>1 \text{ for } 0<r<R,\quad
 \omega(r) \leq 1 \text{ for } r>R.
\end{equation}

\smallskip
\noindent\textbf{Step 2:} We will prove
$\int_{\R^2}\Theta_{4\pi}(w^2)\,dx\le C\|w\|_{L^2(\R^2)}^2$.
Set $h_R:=\int_{|x|>R}|\nabla w|^2\,dx$.
By Lemma~\ref{lem:exterior-descent}, $h_R>0$ and $h_R \leq \|\nabla w\|_{L^2}^2 \leq 1$.  

We find that $h_R<1$. Indeed, if $h_R=1$, then
$\|\nabla w\|_{L^2}^2=1$ and $\int_{B_R}|\nabla w|^2\,dx=0$. Hence $w$ is constant a.e.\ on the connected ball $B_R$. Since $\omega(R)=1$, we derive that $w=1$ a.e. in $B_R$, which contradicts \eqref{eq:unit-superlevel-ball}. 

Put
\[
 v:=(w-1)_+,\qquad l_R:=\sqrt{1-h_R} > 0,\qquad \psi:=\frac{v}{l_R}.
\]
Then $v \in H_0^1(B_R)$. Moreover,
\begin{equation}
 \int_{B_R}|\nabla v|^2\,dx
 =\int_{B_R\cap\{w>1\}}|\nabla w|^2\,dx
 \le\|\nabla w\|_{L^2}^2-h_R\le1-h_R=l_R^2.
\end{equation}
Thus $\psi \in H_0^1(B_R)$, $\psi \ge0$, and $\|\nabla \psi \|_{L^2} \leq 1$.

We next prove the pointwise estimate that couples the core to the
exterior energy.  Since $l_R^2=1-h_R$, we have, for $\ell \ge0$,
\begin{equation}  \label{eq:shift-square-identity}
 (1+l_R \ell )^2 - \ell^2-\frac{1}{h_R}
 =-h_R\left(\ell -\frac{l_R}{h_R}\right)^2.
\end{equation}

We claim that, for $\ell \geq 0$,
\begin{equation} \label{eq:shifted-core-pointwise}
 \frac{e^{4\pi(1+l_R \ell )^2}}{(1+l_R \ell)^2}
 \le 16 h_R^2e^{\frac{4\pi}{h_R}}e^{4\pi \ell^2}.
\end{equation}
By~\eqref{eq:shift-square-identity}, estimate~\eqref{eq:shifted-core-pointwise} is equivalent to
\[
 S(h_R,\ell):=\frac{e^{-4\pi h_R(\ell-l_R/h_R)^2}}{h_R^2(1+l_R\ell)^2}\leq 16.
\]
If $h_R\ge1/2$, then $S(h_R,\ell)\le h_R^{-2}\le4$.  Now suppose that
$0<h_R<1/2$, so $l_R^2>1/2$.  When $\ell \ge l_R/(2h_R)$,
$h_R^2(1+l_R \ell)^2 \geq h_R^2 l_R^2 \ell^2\ge\frac{l_R^4}{4}\geq \frac{1}{16}$
and therefore $S(h_R,\ell) \leq 16$.  When $0 \leq \ell <l_R /(2h_R)$,
$|\ell-l_R/h_R|\ge l_R/(2h_R)$, whence
$S(h_R,\ell)\le h_R^{-2}e^{-\pi l_R^2/h_R}
 \leq h_R^{-2}e^{-\pi/(2h_R)}
 \leq 4e^{-\pi}<1$. For the last inequality we used that
$h\mapsto h^{-2}e^{-\pi/(2h)}$ is increasing on $(0,1/2]$.
Therefore, we get \eqref{eq:shifted-core-pointwise}.

By~\eqref{eq:unit-superlevel-ball}, $w=1+v=1+l_R \psi$ a.e.\ in
$B_R$.  Since $w\ge1$ there and
$\Theta_{4\pi}(s)\le e^{4\pi s}/s$ for $s>0$, we infer from inequalities \eqref{eq:shifted-core-pointwise} and
\eqref{eq:ball-Moser-scaled} that
\begin{align}
 \int_{B_R}\Theta_{4\pi}(w^2)\,dx
 \le \int_{B_R}\frac{e^{4\pi w^2}}{w^2}\,dx \le 16h_R^2e^{\frac{4\pi}{h_R}}
       \int_{B_R}e^{4\pi \psi^2}\,dx \le 16C_{\rm MT}R^2h_R^2e^{\frac{4\pi}{h_R}}
 \le C_3\int_{|x|>R}w^2\,dx,
 \label{eq:core-by-tail-mass}
\end{align}
where in the last inequality, we have used  Lemma~\ref{lem:exterior-descent} and $C_3=C_{\rm MT}\frac{2^9\pi}{1-5e^{-2}}$.

On $B_R^c$, $w\le1$ by~\eqref{eq:unit-superlevel-ball}, hence thanks to \eqref{eq:Theta-small-linear}, we have
$\int_{B_R^c}\Theta_{4\pi}(w^2)\,dx
 \le C_\Theta\int_{B_R^c}w^2\,dx$.
This, together with~\eqref{eq:core-by-tail-mass} and $\int_{B_R^c}|w|^2\,dx\le\|w\|_{L^2}^2=\|u\|_{L^2}^2$, implies \eqref{eq:scalar-exact-growth}. The proof is complete.
\end{proof}

\subsection{Comparison with Ibrahim--Masmoudi--Nakanishi's inequality and  Ruf's inequality}
Put
\[
\mathcal G(s):=\frac{e^{4\pi s}-1}{(1+\sqrt{s})^2}, \quad \text{for } s\ge0.
\]

\begin{lemma}
We have
\begin{equation}  \label{eq:Theta-IMN-comparison}
 \mathcal G(s)\leq \Theta_{4\pi}(s) \leq 16\pi \mathcal G(s),\quad s \geq 0.
\end{equation}
\end{lemma}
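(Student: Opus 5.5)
The plan is to reduce both inequalities in \eqref{eq:Theta-IMN-comparison} to elementary one-variable estimates. At $s=0$ both functions vanish, so fix $s>0$. Put $x:=4\pi s$ and write
\[
\Theta_{4\pi}(s)=4\pi\,\frac{e^x-1-x}{x},\qquad \mathcal G(s)=\frac{e^x-1}{(1+\sqrt{x/(4\pi)})^2}.
\]
The natural split is into the two regimes $x\le 1$ and $x\ge 1$, though a few of the steps below work globally.

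\textbf{Lower bound $\mathcal G\le\Theta_{4\pi}$.} I want
\[
(e^x-1)\,x\le 4\pi\,(e^x-1-x)\,(1+\sqrt{s})^2 .
\]
For $x\ge1$ use $(1+\sqrt s)^2\ge 1+s=1+x/(4\pi)$, so the right side is at least $(4\pi+x)(e^x-1-x)$. It then suffices to check
\[
x(e^x-1)\le (4\pi+x)(e^x-1-x),
\]
which is equivalent to $x^2+4\pi x\le 4\pi(e^x-1)$. This follows from $e^x-1\ge x+x^2/2$ and $4\pi x^2/2\ge x^2$.

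For $x\le1$ there is a difficulty: $\Theta_{4\pi}(s)\approx 4\pi\cdot x/2=8\pi^2 s$, while $\mathcal G(s)\approx x=4\pi s$, and $8\pi^2>4\pi$. The same inequality $x(e^x-1)\le(4\pi+x)(e^x-1-x)$ should still hold there. Using the power series, $(e^x-1-x)/x$ compared with $(e^x-1)/(4\pi+x)$ amounts to comparing the series coefficientwise, since $4\pi\sum_{k\ge2}x^{k}/k!$ dominates $\sum_{k\ge1}x^{k+1}/k!\,-\,x\sum_{k\ge2}x^k/k!$. I would verify this coefficientwise as the uniform argument, so that it covers all $x>0$ at once and the case split is unnecessary.

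\textbf{Upper bound $\Theta_{4\pi}\le16\pi\mathcal G$.} I want
\[
(e^x-1-x)(1+\sqrt s)^2\le 4x\,(e^x-1).
\]
Use $(1+\sqrt s)^2\le 2(1+s)=2+x/(2\pi)$, so it suffices to show
\[
(e^x-1-x)\Big(2+\frac{x}{2\pi}\Big)\le 4x(e^x-1).
\]
Use $e^x-1-x\le e^x-1$ on the term with $x/(2\pi)$; since $1/(2\pi)<4$, that piece is at most $\tfrac{x}{2\pi}(e^x-1)$. For the factor $2$, I need $2(e^x-1-x)\le c\,x(e^x-1)$ with $c=4-1/(2\pi)$. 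This follows from the standard inequality $e^x-1-x\le x(e^x-1)$, which holds because $xe^x-e^x+1=\sum_{k\ge2}x^k(k-1)/k!\ge0$ while $e^x-1-x-x(e^x-1)\le0$ termwise. Hence $2(e^x-1-x)\le 2x(e^x-1)\le c\,x(e^x-1)$.

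\textbf{Main obstacle.} The only delicate point is the small-$s$ regime of the lower bound, where the constants $4\pi$ and $8\pi^2$ must be compared carefully. I expect to handle it with the coefficientwise series comparison, which needs $4\pi/k!\ge 1/(k-1)!-1/(k-1)!\cdot(\ldots)$. Concretely, the coefficient of $x^{k}$ for $k\ge2$ on the left of $x(e^x-1)\le(4\pi+x)(e^x-1-x)$ is $1/(k-1)!$. On the right it is $4\pi/k!+[k\ge3]/(k-1)!$. For $k=2$ this gives $1\le 2\pi$, and for $k\ge3$ it is immediate.
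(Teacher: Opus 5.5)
Your proof is correct. Your lower bound is essentially the paper's. The paper writes $\Theta_{4\pi}(s)/\mathcal G(s)=\frac{(1+\tau)^2}{\tau^2}\bigl(1-\frac{t}{e^t-1}\bigr)$ with $\tau=\sqrt s$ and $t=4\pi s$. It then uses $e^t-1\ge t+t^2/2$, $(1+\tau)^2\ge 1+\tau^2$ and $4\pi\ge 2$, which amounts to your reduction to $x^2\le 4\pi(e^x-1-x)$.

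That reduction is valid for every $x>0$, not just $x\ge1$. So the ``difficulty'' you raise for $x\le 1$ is not real: comparing $8\pi^2 s$ with $4\pi s$ goes in the favorable direction for $\mathcal G\le\Theta_{4\pi}$. The coefficientwise series check is therefore redundant, though it is correct ($1\le 2\pi$ at $k=2$).

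The upper bound is where you genuinely differ. The paper splits at $t=1$:
\begin{itemize}
\item for $t\le1$ it uses $e^t\le(1-t)^{-1}$ to get $1-t/(e^t-1)\le t$;
\item for $t\ge1$ it uses the trivial bound $1-t/(e^t-1)\le 1$ together with $\tau\ge(4\pi)^{-1/2}$.
\end{itemize}
You instead use $(1+\sqrt s)^2\le 2(1+s)$ together with the global inequality $e^x-1-x\le x(e^x-1)$ on the constant part. That inequality is equivalent to $1-x/(e^x-1)\le x$ for all $x>0$, which the paper only establishes on $(0,1]$. On the $s$-part you use the trivial bound $e^x-1-x\le e^x-1$.

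Your route removes the case split and even yields the sharper constant $8\pi+2$ in place of $16\pi$. The paper's ratio formulation, in exchange, makes the separate small-$s$ and large-$s$ behavior of $\Theta_{4\pi}/\mathcal G$ explicit. Both arguments are fully elementary.
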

\begin{proof}
For $s>0$, write
$\tau=\sqrt{s}$ and $t=4\pi s$.  Direct calculation gives
\begin{equation}
 \frac{\Theta_{4\pi}(s)}{\mathcal G(s)}
 =\frac{(1+\tau)^2}{\tau^2}
  \left(1-\frac{t}{e^t-1}\right).
 \label{eq:Theta-calG-ratio}
\end{equation}
Since $e^t-1\ge t+t^2/2$, $1-\frac{t}{e^t-1}\ge\frac{t}{2+t}$. 
Hence, by~\eqref{eq:Theta-calG-ratio},
$
 \frac{\Theta_{4\pi}(s)}{\mathcal G(s)}
 \geq \frac{4\pi(1+\tau)^2}{2+4\pi \tau^2}
 \geq 1.
$

If $0<t<1$, then $e^t\le(1-t)^{-1}$ and consequently
$1-t/(e^t-1)\le t$.  The same conclusion is immediate at $t=1$.
Thus, when $0<t\le1$,
$
 \frac{\Theta_{4\pi}(s)}{\mathcal G(s)}
 \leq4\pi(1+\tau)^2\leq16\pi.
$
If $t\ge1$, then by \eqref{eq:Theta-calG-ratio}, we obtain
\[
 \frac{\Theta_{4\pi}(s)}{\mathcal G(s)}
 \leq \left (1+\frac{1}{\tau} \right )^2
 \leq(1+\sqrt{4\pi})^2<16\pi.
\]
The inequalities also hold for $s=0$. The proof is complete. 
\end{proof}

\begin{corollary}
\label{cor:ruf-from-exact-growth}~

(i) Estimate~\eqref{eq:scalar-exact-growth} is equivalent
to \eqref{eq:IMN-exact-growth-intro}.

(ii) Estimate~\eqref{eq:scalar-exact-growth} implies \eqref{eq:ruf}.
\end{corollary}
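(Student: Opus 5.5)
Part (i) follows from the two-sided comparison \eqref{eq:Theta-IMN-comparison} applied pointwise with $s=|u(x)|^2$. Note that $(1+\sqrt{s})^2=(1+|u|)^2$ when $s=|u|^2$, so $\mathcal G(|u|^2)$ is exactly the integrand of \eqref{eq:IMN-exact-growth-intro}. Integrating $\mathcal G(|u|^2)\le\Theta_{4\pi}(|u|^2)$ shows that \eqref{eq:scalar-exact-growth} implies \eqref{eq:IMN-exact-growth-intro} with the same constant $C_{\rm sc}$. Integrating $\Theta_{4\pi}(|u|^2)\le16\pi\,\mathcal G(|u|^2)$ gives the converse with constant $16\pi C$. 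Both estimates are posed under the same constraint $\|\nabla u\|_{L^2}\le1$, so no further work is needed.

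For part (ii), I will bound $e^{4\pi s}-1$ by $\Theta_{4\pi}(s)$ on the large set and by a linear term on the small set. Let $u\in H^1(\R^2)$ with $\|u\|_{H^1}\le1$, so $\|\nabla u\|_{L^2}\le1$ and $\|u\|_{L^2}\le1$. Split $\R^2$ into $\{|u|^2\le1\}$ and $\{|u|^2>1\}$.

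On $\{|u|^2\le 1\}$, use $e^{4\pi s}-1\le e^{4\pi}\,4\pi s$ for $0\le s\le1$. This part contributes at most $4\pi e^{4\pi}\|u\|_{L^2}^2\le 4\pi e^{4\pi}$.

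On $\{|u|^2>1\}$, write $e^{4\pi s}-1=s\,\Theta_{4\pi}(s)+4\pi s$. This gives only $s\Theta_{4\pi}(s)$, which is not bounded by $\Theta_{4\pi}(s)$. The fix is to rescale before applying \eqref{eq:scalar-exact-growth}: use the full $H^1$ constraint to absorb the extra factor of $s$. This step is the main obstacle.

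Concretely, set $a:=\|\nabla u\|_{L^2}^2\le 1-\|u\|_{L^2}^2$ and consider the standard IMN$\Rightarrow$Ruf mechanism, presumably the Hölder-type argument of \cite{IMN15} recalled in the introduction. The idea is that on $\{|u|>1\}$ the exponent can be relaxed using the $L^2$ deficit $\|u\|_{L^2}^2\le 1-a$.

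First I would try the following. Let $v:=(|u|-\tfrac12)_+$, or more generally a level-shifted truncation; it has $\|\nabla v\|_{L^2}\le\sqrt a$ and support of measure $\le 4\|u\|_{L^2}^2$. On $\{|u|>1\}$, write $|u|^2\le (1+\epsilon)v^2/a\cdot a + C_\epsilon$ by Young's inequality, choosing $\epsilon\sim \|u\|_{L^2}^2$. Because $a\le 1-\|u\|_{L^2}^2$, one gets $4\pi|u|^2\le 4\pi v^2/a + C$ pointwise on this set. Then apply (i), i.e.\ \eqref{eq:IMN-exact-growth-intro}, to $v/\sqrt a$. The denominator $(1+|v|/\sqrt a)^2$ must be compensated; for this I would use the gain $e^{-4\pi\epsilon' v^2}$ left over from the Young splitting, since polynomial factors are absorbed by any exponential gain.

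If this bookkeeping fails, I would fall back on the dilation argument of Remark~\ref{rem:scalar-comparisons} mentioned in the introduction. Replace $u$ by $u_\lambda(x)=u(x/\lambda)$, which preserves $\|\nabla u\|_{L^2}$ and scales both sides by $\lambda^2$. This reduces matters to bounding the supercritical part by the $L^2$ norm times a uniform constant, and that bound then follows from \eqref{eq:scalar-exact-growth} together with the mass bound $\int_{\{|u|>1\}}dx\le\|u\|_{L^2}^2\le1$.
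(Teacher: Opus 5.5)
Part (i) is exactly the paper's argument: you integrate the pointwise comparison \eqref{eq:Theta-IMN-comparison} at $s=|u|^2$ under the common constraint $\|\nabla u\|_{L^2}\le1$.

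Part (ii) has a genuine gap. The step you flag as the main obstacle is never carried out, and both routes you offer fail.

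\emph{Fixed-level truncation.} With the level $\tfrac12$, Young's inequality gives $|u|^2=(v+\tfrac12)^2\le(1+\epsilon)v^2+\tfrac14(1+\epsilon^{-1})$. To get $(1+\epsilon)v^2\le v^2/a$ when $a$ is close to $1-\|u\|_{L^2}^2$, you are forced to take $\epsilon\le\|u\|_{L^2}^2/(1-\|u\|_{L^2}^2)$. The additive term then produces a factor at least $e^{\pi/\|u\|_{L^2}^2}$, which the support bound $4\|u\|_{L^2}^2$ cannot compensate. Yet $\|u\|_{L^2}\to0$ with $\|\nabla u\|_{L^2}\to1$ is exactly the critical concentration regime. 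Moreover, once $(1+\epsilon)a\le1$ is saturated there is no leftover gain. Any gain you do keep has size only $\lesssim\|u\|_{L^2}^2$, so absorbing $(1+|v|/\sqrt a)^2$ costs a factor $\|u\|_{L^2}^{-2}$ that you never pay for.

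\emph{Dilation fallback.} By scaling, Ruf's inequality is equivalent to $\int_{\R^2}(e^{4\pi|u|^2}-1)\,dx\le C\|u\|_{L^2}^2/(1-\|\nabla u\|_{L^2}^2)$. No bound of the form $C\|u\|_{L^2}^2$ with uniform $C$ can hold under $\|\nabla u\|_{L^2}\le1$ alone. Indeed, the functions $w_k$ of Lemma~\ref{lem:sharp-critical-coefficient} give $\int_{B_{1/k}}(e^{4\pi w_k^2}-1)\,dx\to\pi$ while $\|w_k\|_{L^2}\to0$. The measure bound $|\{|u|>1\}|\le1$ cannot recover the unbounded factor $|u|^2$ that separates $e^{4\pi|u|^2}$ from $\Theta_{4\pi}(|u|^2)$ there.

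\emph{What is missing.} You need a mechanism that turns the slack $1-\|\nabla u\|_{L^2}^2\ge\|u\|_{L^2}^2$ into a uniform bound. The paper does this as follows:
\begin{enumerate}
\item From the power series of $\Theta_{4\pi}$ it derives the moment bounds \eqref{eq:exact-growth-moments}.
\item It treats $a_1:=\|u\|_{L^2}^2\ge\tfrac12$ subcritically via $\sqrt2\,u$.
\item For $a_1<\tfrac12$ it sets $w=u/\sqrt{1-a_1}$ and $p=(1-a_1)/a_1$. It then applies H\"older on $\{|u|\ge1\}$ to the identity $e^{4\pi|u|^2}=(e^{4\pi|w|^2}/|w|^2)^{1-a_1}(|w|^{2p})^{a_1}$.
\end{enumerate}

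Your truncation idea can be rescued, but only by tying the level to the mass (Ruf's trick). For $a_1<\tfrac12$, take $L=\|u\|_{L^2}$, $\epsilon=a_1/2$ and $\tilde v=(|u|-L)_+/\sqrt{1-a_1}$. Then $\|\nabla\tilde v\|_{L^2}\le1$, $(1+\epsilon^{-1})L^2\le3$, and $|\{|u|>L\}|\le1$. On that set, $4\pi|u|^2\le4\pi\tilde v^2-2\pi a_1\tilde v^2+12\pi$. On $\{\tilde v>1\}$, absorbing the factor $\tilde v^2$ into $e^{-2\pi a_1\tilde v^2}$ costs $\lesssim a_1^{-1}$. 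That cost is exactly offset by $\|\tilde v\|_{L^2}^2\le2a_1$ when you apply \eqref{eq:scalar-exact-growth}.
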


\begin{proof}
\textit{(i)} Thanks to \eqref{eq:Theta-IMN-comparison},   \eqref{eq:scalar-exact-growth} and \eqref{eq:IMN-exact-growth-intro} are  equivalent.

\textit{(ii)} Assuming \eqref{eq:scalar-exact-growth}, we will prove \eqref{eq:ruf}. 
Let $w \in H^1(\R^2)$ such that $\| \nabla w \|_{L^2} \leq 1$. Let $m\geq1$ be an integer. Considering the term with index $j=m+1$ in the power series of $\Theta_{4\pi}$ and using~\eqref{eq:scalar-exact-growth}, we obtain
\[
 \frac{(4\pi)^{m+1}}{(m+1)!}
 \int_{\R^2}|w|^{2m}\,dx
 \le \int_{\R^2}\Theta_{4\pi}(|w|^2)\,dx
 \le C_{\rm sc}\|w\|_{L^2}^2.
\]
Taking the $m$th root and using
$(m+1)! \leq (m+1)^{m+1}$, $m+1 \leq 2m$, and
$(m+1)^{1/m} \leq 2$, we obtain
\begin{align*}
 \big\||w|^2\big\|_{L^m}
 \leq \left(\frac{C_{\rm sc}(m+1)!}{(4\pi)^{m+1}}\right)^{\frac{1}{m}}
 \|w\|_{L^2}^{\frac{2}{m}} \leq
 \frac{C_{\rm sc}^{\frac{1}{m}}}{(4\pi)^{1+\frac{1}{m}}}
 (m+1)^{1+\frac{1}{m}}\|w\|_{L^2}^{\frac{2}{m}}
 \le C_4 m \|w\|_{L^2}^{\frac{2}{m}}.
\end{align*}
Here $C_4$ is universal because both
$C_{\rm sc}^{1/m}$ and $(4\pi)^{-1/m}$ are uniformly bounded for $m\ge1$.  Thus
\begin{equation}  \label{eq:integer-exact-growth-moments}
 \big\||w|^2\big\|_{L^m}
 \le C_4 m \,\| w \|_{L^2}^{\frac{2}{m}},
 \quad m \in \N.
\end{equation}

For an arbitrary real exponent $p\ge1$, choose $m \in \N$ such that $m \le p\le m+1$, and 
$\eta\in[0,1]$ so that
$p^{-1}=\eta m^{-1}+(1-\eta)(m+1)^{-1}$.  Then by H\"older's inequality and~\eqref{eq:integer-exact-growth-moments}, we have
\begin{align*}
 \big\||w|^2\big\|_{L^p} \le \big\||w|^2\big\|_{L^m}^\eta
       \big\||w|^2\big\|_{L^{m+1}}^{1-\eta} \le C_4 m^\eta(m+1)^{1-\eta}
       \|w\|_{L^2}^{\frac{2\eta}{m}+\frac{2(1-\eta)}{(m+1)}}
 \le C_5 p\,\|w\|_{L^2}^{\frac{2}{p}}.
\end{align*}
Here the mass exponent equals $2/p$ by the definition of $\eta$, and
$m^\eta(m+1)^{1-\eta}\leq m+ 1 \leq 2p$.  Therefore we have proved that for any $p \geq 1$,
\begin{equation}  \label{eq:exact-growth-moments}
 \big\||w|^2\big\|_{L^p}
 \le C_5 p\,\|w\|_{L^2}^{\frac{2}{p}}
 \quad \text{for all } w \in H^1(\R^2) \text{ with } \|\nabla w\|_{L^2} \leq 1.
\end{equation}

Now let $u\in H^1(\R^2)$ satisfy $\|u\|_{H^1} \leq 1$, and put
$a_1:=\|u\|_{L^2}^2$ and $a_2:=\|\nabla u\|_{L^2}^2$. Then $a_1+a_2\le1$.
If $a_1=0$, then $u=0$.  If $a_1\ge1/2$, then
$w:=\sqrt2u$ satisfies
$\|\nabla w\|_{L^2}^2=2a_2\le2(1-a_1)\le1$.
By the elementary bound
$e^{2\pi s}-1\le C\bigl(s+\Theta_{4\pi}(s)\bigr)$ for $s\ge0$
and Proposition~\ref{prop:scalar-exact-growth}, we derive
\[
 \int_{\R^2}(e^{4\pi|u|^2}-1)\,dx
 \leq C\left(\|w\|_{L^2}^2+
       \int_{\R^2}\Theta_{4\pi}(|w|^2)\,dx\right)\le C.
\]

It remains to consider $0<a_1<1/2$.  Set
$w:=\frac{u}{\sqrt{1-a_1}}$. 
Then
$\|\nabla w\|_{L^2}^2=a_2/(1-a_1)\le1$ and
$\|w\|_{L^2}^2=a_1/(1-a_1)$.  On
$A:=\{x\in\R^2:|u(x)|\ge1\}$ one has $|w|\ge1$.  Since
$e^{4\pi s}-1-4\pi s
 \ge \bigl(1-(1+4\pi)e^{-4\pi}\bigr)e^{4\pi s}$ for $s\ge1$,
by Proposition~\ref{prop:scalar-exact-growth}, we obtain
\begin{equation}
 \int_A\frac{e^{4\pi|w|^2}}{|w|^2}\,dx
 \le C\int_{\R^2}\Theta_{4\pi}(|w|^2)\,dx
 \le C\frac{a_1}{1-a_1}.
 \label{eq:ruf-high-exact}
\end{equation}
Put $p:=(1-a_1)/a_1>1$.  Thus
$\|w\|_{L^2}^2=1/p$ and $p a_1=1-a_1$.  By the pointwise identity
$e^{4\pi|u|^2}
 =\left(\frac{e^{4\pi|w|^2}}{|w|^2}\right)^{1-a_1}
  \left(|w|^{2p}\right)^{a_1}$,  H\"older's inequality, 
\eqref{eq:ruf-high-exact} and~\eqref{eq:exact-growth-moments}, we derive
\begin{equation} \label{integral-A}
 \int_A e^{4\pi|u|^2}\,dx
 \le \left(\frac{C}p\right)^{1-a_1}
       \left(\int_A|w|^{2p}\,dx\right)^{a_1} \le \left(\frac{C}p\right)^{1-a_1}
       \left(C_5 p\Big(\frac{1}{p}\Big)^{\frac{1}{p}}\right)^{p a_1}
 \le C p^{-a_1}\le C.
\end{equation}
Finally, on $A^c$,
$e^{4\pi|u|^2}-1 \leq (e^{4\pi}-1)|u|^2$, hence
\begin{equation} \label{eq:integral-A-complement} \int_{A^c}\bigl(e^{4\pi|u|^2}-1\bigr)\,dx \leq (e^{4\pi}-1) \int_{A^c}|u|^2\,dx \leq e^{4\pi}-1.
\end{equation}
Combining \eqref{integral-A} and \eqref{eq:integral-A-complement} leads to \eqref{eq:ruf}.
\end{proof}

\begin{remark} \label{rem:scalar-comparisons}
For completeness, we provide the scaling argument that connects the scalar estimate under the homogeneous gradient constraint to the corresponding inequality under the full $H^1$-norm constraint. For $N=1$, estimate~\eqref{eq:rank-one-defect} becomes
\begin{equation}  \label{eq:inhomogeneous-scalar-exact}
 \int_{\R^2}\Theta_{4\pi}(|u|^2)\,dx
 \le C\|u\|_{L^2}^2
 \quad \text{for all } u \in H^1(\R^2) \text{ with } \|u\|_{H^1}=1.
\end{equation}
We observe that inequality \eqref{eq:scalar-exact-growth} implies  \eqref{eq:inhomogeneous-scalar-exact} because $\|u\|_{H^1}=1$ entails
$\|\nabla u\|_{L^2} \le 1$.  Conversely, assume
\eqref{eq:inhomogeneous-scalar-exact} and let $u\in H^1(\R^2)$ with $\ell_1:=\|u\|_{L^2}^2$ and
$\ell_2:=\|\nabla u\|_{L^2}^2\leq 1$.
The case $\ell_1=0$ is trivial.  Now we consider the case $\ell_1>0$. If $\ell_2<1$, then we set
\[
 \lambda^2:=\frac{\ell_1}{1-\ell_2},
 \quad v(x):=u(\lambda x).
\]
The two-dimensional scaling gives
$\|\nabla v\|_{L^2}^2=\ell_2$ and
$\|v\|_{L^2}^2=\lambda^{-2}\ell_1=1-\ell_2$, and hence
$\|v\|_{H^1}=1$. Applying
\eqref{eq:inhomogeneous-scalar-exact} to $v$ and changing variables
yields
$
 \lambda^{-2}\int_{\R^2}\Theta_{4\pi}(|u|^2)\,dx
 \le C\lambda^{-2}\ell_1,
$
which is the homogeneous estimate.  If $\ell_2=1$, then we apply the preceding
argument to $u_\delta:=\sqrt{1-\delta}\,u$, $0<\delta<1$, and let
$\delta\downarrow0$.  Since $\Theta_{4\pi}$ is increasing, the monotone convergence theorem yields the same conclusion. Hence the homogeneous and
inhomogeneous estimates are equivalent.
\end{remark}

\section{Auxiliary semigroup estimates}
\label{sec:auxiliary-estimates}

We collect elementary estimates used in the semigroup  and high-density arguments.

\begin{lemma}
\label{lem:scalar-identity}
For every $\lambda>0$,
$$
 \int_0^\infty\frac{(1-e^{-t\lambda})^2}{t^2}\,dt
 =2(\ln2)\lambda.
$$
\end{lemma}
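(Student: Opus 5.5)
The plan is a direct one-dimensional computation: expand the square, recognize a Frullani-type integral, and scale out $\lambda$. First I reduce to $\lambda=1$ by the substitution $s=t\lambda$:
\[
\int_0^\infty\frac{(1-e^{-t\lambda})^2}{t^2}\,dt=\lambda\int_0^\infty\frac{(1-e^{-s})^2}{s^2}\,ds .
\]
So it suffices to show that $I:=\int_0^\infty (1-e^{-s})^2 s^{-2}\,ds=2\ln 2$.

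The integral $I$ converges. Near $0$ the integrand tends to $1$, and at infinity it is $O(s^{-2})$. To evaluate it, I integrate by parts with $u=(1-e^{-s})^2$ and $dv=s^{-2}ds$. The boundary term $-(1-e^{-s})^2/s$ vanishes at both endpoints, since it behaves like $-s$ as $s\to0$ and like $-1/s$ as $s\to\infty$. This leaves
\[
I=\int_0^\infty\frac{2(1-e^{-s})e^{-s}}{s}\,ds=2\int_0^\infty\frac{e^{-s}-e^{-2s}}{s}\,ds .
\]

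The remaining integral is Frullani's integral with $f(s)=e^{-s}$, $a=1$, $b=2$, which equals $\ln(b/a)=\ln 2$. To keep this self-contained, I would prove it by writing $e^{-s}-e^{-2s}=s\int_1^2 e^{-us}\,du$ and applying Tonelli, since everything is nonnegative:
\[
\int_0^\infty\frac{e^{-s}-e^{-2s}}{s}\,ds=\int_1^2\!\int_0^\infty e^{-us}\,ds\,du=\int_1^2\frac{du}{u}=\ln 2 .
\]
Hence $I=2\ln 2$, and the lemma follows.

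The only point needing care is justifying the boundary terms and convergence in the integration by parts. The Tonelli route handles the Frullani step cleanly, so I do not expect a genuine obstacle.
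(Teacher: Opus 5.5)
Your proposal is correct and follows essentially the same route as the paper: rescale by $s=t\lambda$, integrate by parts with vanishing boundary terms, and evaluate $2\int_0^\infty (e^{-s}-e^{-2s})s^{-1}\,ds=2\ln 2$ via Frullani's identity. The only difference is that you derive Frullani's identity through the Tonelli argument $e^{-s}-e^{-2s}=s\int_1^2 e^{-us}\,du$, whereas the paper simply quotes it.
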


\begin{proof}
The change of variables $s=t\lambda$ gives
\[
 \int_0^\infty\frac{(1-e^{-t\lambda})^2}{t^2}\,dt
 =\lambda\int_0^\infty\frac{(1-e^{-s})^2}{s^2}\,ds.
\]
Since $(1-e^{-s})^2/s\to0$ as $s\downarrow0$ and as
$s\to\infty$, integration by parts gives
\[
 \int_0^\infty\frac{(1-e^{-s})^2}{s^2}\,ds
 =2\int_0^\infty\frac{e^{-s}-e^{-2s}}s\,ds
 =2\ln2,
\]
where the last equality follows from Frullani's identity
$\int_0^\infty(e^{-as}-e^{-bs})s^{-1}\,ds=\ln(b/a)$
for $a,b>0$, applied with $a=1$ and $b=2$.
\end{proof}

\begin{lemma}
\label{lem:J-lower}
There is a universal constant $c>0$ such that for every  $\sigma\ge0$,
\begin{equation} \label{append:est-1}
 \int_0^\sigma e^s(\sqrt \sigma-\sqrt s)^2\,ds
 \ge c \left(\frac{e^\sigma}{1+\sigma}-1\right).
\end{equation}
\end{lemma}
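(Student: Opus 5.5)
We need a lower bound $J(\sigma):=\int_0^\sigma e^s(\sqrt\sigma-\sqrt s)^2\,ds\ge c\,(e^\sigma/(1+\sigma)-1)$ for all $\sigma\ge0$. The plan is to split into small and large $\sigma$. The right-hand side behaves like $\sigma^2/2$ near $0$ and like $e^\sigma/\sigma$ at infinity. The left-hand side should behave like $\sigma^2/6$ near $0$, and like $e^\sigma/(4\sigma)$ at infinity, since near $s=\sigma$ we have $\sqrt\sigma-\sqrt s\approx(\sigma-s)/(2\sqrt\sigma)$.

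\emph{Small $\sigma$, say $0\le\sigma\le 1$.} Using $e^s\ge1$,
\[
J(\sigma)\ge\int_0^\sigma(\sqrt\sigma-\sqrt s)^2\,ds=\sigma^2-\tfrac{4}{3}\sigma^2+\tfrac12\sigma^2=\tfrac{\sigma^2}{6}.
\]
For the right-hand side, $e^\sigma/(1+\sigma)-1=(e^\sigma-1-\sigma)/(1+\sigma)\le e^\sigma-1-\sigma\le e\sigma^2/2$ by Taylor's formula. Hence the inequality holds on $[0,1]$ with $c=1/(3e)$.

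\emph{Large $\sigma\ge1$.} Restrict the integral to $s\in[\sigma-1,\sigma]$ and substitute $s=\sigma-\tau$ with $\tau\in[0,1]$. Then
\[
\sqrt\sigma-\sqrt{\sigma-\tau}=\frac{\tau}{\sqrt\sigma+\sqrt{\sigma-\tau}}\ge\frac{\tau}{2\sqrt\sigma},
\]
so
\[
J(\sigma)\ge\frac{e^\sigma}{4\sigma}\int_0^1\tau^2e^{-\tau}\,d\tau=\frac{(1-5/(2e))\,e^\sigma}{4\sigma}
\]
(the constant is $2-5/e>0$ divided by $4$). Meanwhile the right-hand side satisfies $e^\sigma/(1+\sigma)-1\le e^\sigma/\sigma$. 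Combining gives the claim with $c=(2-5/e)/4$ on $[1,\infty)$.

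Finally take $c$ to be the minimum of the two constants. The only mildly delicate step is the large-$\sigma$ localization near the endpoint $s=\sigma$. Its key input is the elementary lower bound on $\sqrt\sigma-\sqrt{\sigma-\tau}$; everything else is routine.
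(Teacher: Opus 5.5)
Your proof is correct and follows the paper's argument: the same split at $\sigma=1$, and the identical endpoint localization on $[\sigma-1,\sigma]$ with $\sqrt\sigma-\sqrt{\sigma-\tau}\ge\tau/(2\sqrt\sigma)$, giving $c=(2-5/e)/4$. For $\sigma\le1$ you argue slightly differently from the paper. You compute $\int_0^\sigma(\sqrt\sigma-\sqrt s)^2\,ds=\sigma^2/6$ exactly and bound $e^\sigma-1-\sigma\le e\sigma^2/2$, whereas the paper restricts to $s<\sigma/4$ and bounds $b''$; your version even gives a better constant.

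There is one harmless slip in your display: $\int_0^1\tau^2e^{-\tau}\,d\tau=2-5/e$, not $1-5/(2e)$. Your stated final constant $(2-5/e)/4$ is the correct one, so nothing else changes.
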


\begin{proof}
First suppose that  $\sigma\ge1$ and put $r=\sigma-s$. For
$s\in[\sigma-1,\sigma]$, we have $r\in[0,1]$ and
 $\sqrt \sigma-\sqrt{\sigma-r}
 =\frac{r}{\sqrt \sigma+\sqrt{\sigma-r}}
 \ge\frac{r}{2\sqrt \sigma}$.
Set $c_L:=\frac{1}{4}\int_0^1r^2e^{-r}\,dr=(2-5/e)/4>0$. Consequently,
\begin{equation}
 \int_0^\sigma e^s(\sqrt \sigma-\sqrt s)^2\,ds
 \ge c_L\frac{e^\sigma}{\sigma}
 \ge c_L\left(\frac{e^\sigma}{1+\sigma}-1\right).
\end{equation}

If  $0\le \sigma\le1$, then on  $0<s<\sigma/4$, we have
$\sqrt \sigma-\sqrt s\ge\sqrt \sigma/2$, and hence
 $\int_0^\sigma e^s(\sqrt \sigma-\sqrt s)^2\,ds\ge\frac{\sigma^2}{16}$.
For $b(\sigma):=e^\sigma/(1+\sigma)$, one has  $b(0)=1$, $b'(0)=0$, and
$ b''(\sigma)=\frac{e^\sigma(1+\sigma^2)}{(1+\sigma)^3}\le2e$ for $0 \leq \sigma\leq 1$.
Therefore, by using Taylor's formula, we deduce  $b(\sigma)-1\le e\sigma^2$, so
$$
 \int_0^\sigma e^s(\sqrt \sigma-\sqrt s)^2\,ds
 \ge\frac{1}{16e}\left(\frac{e^\sigma}{1+\sigma}-1\right).
$$
Combining the two cases yields \eqref{append:est-1} with
$c:=\min\{(2-5/e)/4,1/(16e)\}>0$.
\end{proof}
Define
\begin{equation} \label{def:Phi}
 \Phi(\tau):=\inf_{t>0}
 \left[e^{2t\tau}\ln\left(1+\frac{1}{2t}\right)\right],
 \quad\tau>0.
\end{equation}

\begin{lemma}
\label{lem:optimized-Phi}
For every $\tau\ge e^2$,
\begin{equation}
 \Phi(\tau)
 \le\left(1+\frac{2}{\ln\tau}\right)
 \bigl(\ln\tau+\ln\ln\tau+\ln2\bigr).
\label{eq:Phi-bound}
\end{equation}
In particular, $\limsup_{\tau\to\infty}\frac{\Phi(\tau)}{\ln\tau}\le1$.
\end{lemma}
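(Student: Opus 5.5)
The bound follows from evaluating the expression inside the infimum at one well-chosen $t$; the natural choice is $t=\frac{\ln\ln\tau}{2\tau}$. Since $\tau\ge e^2$, we have $\ln\tau\ge2$ and $\ln\ln\tau\ge\ln2>0$, so this $t$ is positive and admissible. With this choice $e^{2t\tau}=\ln\tau$, and by definition \eqref{def:Phi},
\[
\Phi(\tau)\le \ln\tau\cdot\ln\Big(1+\frac{\tau}{\ln\ln\tau}\Big).
\]
This already gives the desired asymptotics $\ln\tau\cdot(\ln\tau+O(1))$ only up to a factor $\ln\tau$. That is too large, so this choice balances the two factors incorrectly.

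Redo the balance: we want $e^{2t\tau}$ to be $1+O(1/\ln\tau)$, while $\ln(1+1/(2t))\approx\ln\tau+\ln\ln\tau+\ln 2$. Hence $1/(2t)$ should be about $2\tau\ln\tau$ up to constants. Take
\[
t=\frac{1}{\tau\ln\tau}.
\]
Then $2t\tau=2/\ln\tau$, so $e^{2t\tau}=e^{2/\ln\tau}$. Also $1/(2t)=\tau\ln\tau/2$, so $\ln(1+1/(2t))=\ln(1+\tfrac12\tau\ln\tau)$. The claimed right-hand side has the factor $1+2/\ln\tau$ rather than $e^{2/\ln\tau}$, and $e^x\ge 1+x$ runs the wrong way. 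So I will compensate in the logarithm, using slack there to absorb the difference. Use $\ln(1+\tfrac12\tau\ln\tau)\le \ln(\tau\ln\tau)=\ln\tau+\ln\ln\tau$, valid since $\tfrac12\tau\ln\tau\ge1$; this saves $\ln2$ against the target $\ln\tau+\ln\ln\tau+\ln2$.

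It then remains to verify the elementary inequality, with $x=2/\ln\tau\in(0,1]$ and $L=\ln\tau+\ln\ln\tau$:
\[
e^{x}L\le (1+x)(L+\ln 2).
\]
Bound $e^x\le 1+x+x^2$ on $[0,1]$ (since $e^x-1-x\le (e-2)x^2$ there). It then suffices to have $x^2L\le(1+x)\ln2$. Because $x=2/\ln\tau$, the left side is $x^2L=\frac{4}{(\ln\tau)^2}(\ln\tau+\ln\ln\tau)\le \frac{8}{\ln\tau}=4x$, using $\ln\ln\tau\le\ln\tau$. So the requirement becomes $4x\le(1+x)\ln 2$, which fails for $x$ near $1$. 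This is the main obstacle: getting the constants right at the small-$\tau$ end.

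To fix it, I will tune $t=\frac{c}{\tau\ln\tau}$ with a small $c$, say $c=1/2$. Then $e^{2t\tau}=e^{1/\ln\tau}\le 1+\frac{1}{\ln\tau}+\frac{1}{(\ln\tau)^2}$, which is at most $1+\frac{2}{\ln\tau}$ because $\ln\tau\ge1$. This choice costs $\ln(1+\tau\ln\tau)\le\ln(2\tau\ln\tau)=\ln\tau+\ln\ln\tau+\ln2$, using $\tau\ln\tau\ge1$. Multiplying the two bounds gives exactly \eqref{eq:Phi-bound}. This choice makes everything clean, and I expect it to be the intended one.

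The limsup statement then follows at once. Dividing by $\ln\tau$ gives $(1+2/\ln\tau)(1+(\ln\ln\tau+\ln2)/\ln\tau)\to1$ as $\tau\to\infty$.
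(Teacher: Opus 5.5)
Your final argument is correct and is the same as the paper's. Both evaluate the infimum at $t=(2\tau\ln\tau)^{-1}$, bound $e^{1/\ln\tau}\le 1+2/\ln\tau$ and $\ln(1+\tau\ln\tau)\le\ln(2\tau\ln\tau)$, and then read off the limsup; the two earlier trial choices of $t$ are abandoned detours and can be dropped from a written proof.
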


\begin{proof}
For $\tau\ge e^2$, choose $t=(2\tau\ln\tau)^{-1}$. Then
$2t\tau=1/\ln\tau\le1/2$, and 
$e^{2t\tau}\le1+\frac{2}{\ln\tau}$.
Moreover, $1/(2t)=\tau\ln\tau\ge1$, so
\[
 \ln\left(1+\frac{1}{2t}\right)
 \le\ln(2\tau\ln\tau)
 =\ln\tau+\ln\ln\tau+\ln2.
\]
This and $e^{2t\tau} \le 1+\frac{2}{\ln\tau}$ imply \eqref{eq:Phi-bound}. Then we derive $\limsup_{\tau\to\infty}\frac{\Phi(\tau)}{\ln\tau}\le1$ from \eqref{eq:Phi-bound}.
\end{proof}

 \noindent \textbf{Data availability.} This research has no associated data. \medskip

\noindent \textbf{Conflict of interest.} The authors declare that they have no conflicts of interest. \medskip

\noindent \textbf{AI assistance statement.} 
The authors used OpenAI ChatGPT/Codex to assist with calculations and polishing the writing. The authors retain full responsibility for the main ideas, mathematical verification, and all final review.

\end{document}